\newtheorem{theorem}{Theorem}[section] 
\newtheorem{prop}[theorem]{Proposition}
\newtheorem{cor}[theorem]{Corollary}
\newtheorem*{theorem*}{Theorem}
\theoremstyle{definition}
\newtheorem{definition}{Definition}
\newtheorem{example}{Example}
\newcommand{\var}{\mathcal{V}}
\newcommand{\qvar}{\mathcal{Q}}
\newcommand{\bydef}{:=}
\newcommand{\WSFi}{\mathsf{WS5}}
\newcommand{\CSub}{\mathbb{S}}
\newcommand{\CHom}{\mathbb{H}}
\newcommand{\CProd}{\mathbb{P}}
\newcommand{\CUProd}{\mathbb{P}_\mathsf{u}}
\newcommand{\KM}{\mathsf{KM}}
\newcommand{\impl}{\rightarrow}
\newcommand{\dimpl}{\leftarrow}
\newcommand{\Var}[1]{\mathcal{#1}}
\newcommand{\one}{\mathbf{1}}
\newcommand{\zero}{\mathbf{0}}
\newcommand{\mapping}{\longrightarrow}
\newcommand{\Con}{\mathsf{Con}}
\newcommand{\pfltr}[1]{[#1)}
\newcommand{\Heyt}{\BV{\mathsf{HA}}}
\newcommand{\HDP}{\BV{\mathsf{HDP}}}
\newcommand{\DHt}{\BV{\mathsf{DH}}}
\newcommand{\VMHA}{\BV{\mathsf{MH}}}
\newcommand{\VHRI}{\BV{\mathsf{HRI}}}
\newcommand{\VWSFi}{\BV{\mathsf{WS}5}}
\newcommand{\VSFi}{\BV{\mathsf{S}5}}
\newcommand{\MA}{\Alg{M}}
\newcommand{\two}{\Alg{2}}
\newcommand{\BRed}[1]{#1^\mathsf{b}}
\newcommand{\set}[2]{\{ #1 \ | \ #2 \}}
\newcommand{\pair}[2]{\langle #1, #2 \rangle}
\newcommand{\alg}[1]{\mathsf{#1}}
\newcommand{\Alg}[1]{\mathbf{#1}}
\newcommand{\Falgv}[1]{\Alg{F}_{\Var{V}}(#1)}
\newcommand{\Falg}[2]{\Alg{F}_{\Var{#2}}(#1)}
\newcommand{\falg}[2]{\Alg{F}_{#2}(#1)}
\newcommand{\pclass}[2]{#1/{#2}}
\newcommand{\BV}[1]{\boldsymbol{#1}}
\newcommand{\mvar}{\var_{min}}
\newcommand{\varq}{\qvar_\var}
\newcommand{\varmh}{\var_{mh}}
\newcommand{\Def}[1]{\textbf{\textit{#1}}}
\newcommand{\dneg}{\backneg}
\title[Projective Algebras in Varieties with Factor Congruences]{Projective Algebras and Primitive Subquasivarieties in Varieties with Factor Congruences}
\author{Alex Citkin}
\address{Metropolitan Telecommunications, USA} 
\email{acitkin@gmail.com}
\begin{document}

\begin{abstract} We prove that in the varieties where every compact congruence is a factor congruence and every nontrivial algebra contains a minimal subalgebra, a finitely presented algebra is projective if and only if it has every minimal algebra as its homomorphic image. Using this criterion of projectivity, we describe the primitive subquasivarieties of discriminator varieties that have a finite minimal algebra which embeds in every nontrivial algebra from this variety. In particular,	 we describe the primitive quasivarieties of discriminator varieties of monadic Heyting algebras, Heyting algebras with regular involution, Heyting algebras with dual pseudocomplement, and double-Heyting algebras.
\end{abstract} 

\keywords{factor congruence, discriminator variety, primitive quasivariety, Heyting algebra, monadic Heyting algebra, Heyting algebra with involution, Heyting algebra with pseudocomplement, double-Heyting algebra}

\maketitle




\section{Introduction}

It was observed by Ghilardi (see e.g. \cite{Ghilardi_Unification_1997}), that unification in algebraizable logics is directly related to projectivity of finitely presented algebras of the corresponding variety. This justifies the interest to studying of the projective finitely presented algebras in the varieties raised from logic. Finite projective Heyting algebras have rather a simple structure which had been described a long time ago (see \cite{Balbes_Horn_1970}). Projective finitely presented Heyting algebras have been described by Ghilardi who observed that any finitely generated subalgebra of a finitely presented projective Heyting
algebra is finitely presented as well as projective  (see e.g. \cite[p 109]{Ghilardi_Zawadowski_Book_2002}). Projectivity of Heyting and monadic Heyting algebras was studied by Grigolia (see e.g. \cite{Grigolia_Free_1995}). Using the duality theory he gave another criterion for a finitely generated Heyting algebra to be projective (see \cite[Theorem 3.1]{Grigolia_Free_1995}).     

The varieties corresponding to logics often contain a nontrivial algebra that embeds in every nontrivial algebra from this variety. For instance, some of such algebras are two-element Boolean algebra in the varieties of Heyting algebras, two-element Boolean algebra with the trivial interior operator in the varieties of interior algebras, etc. Also, these varieties often have congruences with rather nice properties, like equationally definable principal congruences (EDPC), etc. (see e.g. \cite{Blk_Pgz_3}). If such a variety is discriminator, Quackenbush proved the following criterion for projectivity of finite algebras \cite[Theorem 5.2]{Quackenbush_Demi-semi-primal_1971}: in a variety generated by a quasi-primal algebra, a finite algebra is projective if and only if it has every minimal algebra as a homomorphic image. In Section \ref{sec-proj} we extend this criterion to finitely-presented algebras from congruence distributive varieties with factor congruences in which every nontrivial algebra contains a minimal subalgebra.

Then, in Section \ref{sec-prim}, we employ the criterion established in Section \ref{sec-proj} and we study the primitive subquasivarieties of the congruence distributive varieties with factor congruences, in which every nontrivial algebra contains a minimal subalgebra. The primitive quasivarieties are algebraic counterparts of the hereditarily structurally complete consequence relations (see e.g. \cite{Olson_Raftery_Alten_2008,Cintula_Metcalfe_2009}). 

In Section \ref{sec-appl}, we apply the obtained results to the following varieties (see the definitions in Section \ref{sec-prel}):
\begin{itemize}
	\item[(a)] $\WSFi$-algebras (see \cite{Bezhanishvili_G_Glivenko_2001});
	\item[(b)] Heyting algebras with dual pesudocomplement (see e.g. \cite{Taylor_C_Discriminator_2016}); 
	\item[(c)] Double Heyting algebras (see e.g. \cite{Taylor_C_Discriminator_2016});
	\item[(b)] Heyting algebras with involution (see e.g. \cite{Meskhi_Discriminator_1982} or symmetrical Heyting algebras (see e.g. \cite{Iturrioz_Modal_1982}).
\end{itemize}
We also look at monadic Heyting algebras (see \cite{G_Bez_MIPC_1}) and $\KM$-algebras (see e.g. \cite{Muravitsky_Logic_2014}) as well as Heyting algebras with successor (see \cite{Caicedo_Cignoli_Algebraic_2001}) and frontal algebras (see \cite{Castiglioni_et_Frontal_2010}).

\section{Preliminaries} \label{sec-prel}

\subsection{Algebras and Congruences}

We consider algebras of arbitrary but fixed finite similarity type. Given algebra $\Alg{A}$, $\Con \Alg{A}$ denotes the congruence lattice of algebra $\Alg{A}$, and $\varepsilon_\Alg{A}$ and $\tau_\Alg{A}$ denote respectively the smallest and the greatest elements of $\Con\Alg{A}$, that is, identity and trivial congruences. We assume that the reader is familiar with the basic notions of universal algebra from \cite{Burris_Sanka} or \cite{GraetzerB}. 

Recall that a nontrivial algebra $\Alg{A}$ is \Def{simple}, if it has exactly two congruence, or, in other words, when $\Alg{A}$ has exactly two homomorphic images: a trivial algebra and itself.

Let $\Var{K}$ be a class of algebras and $\Alg{A}$ be an algebra. Then a congruence $\theta$ on $\Alg{A}$ is said to be a $\Var{K}$-congruence if $\Alg{A}/\theta \in \Var{K}$. The set of all $\Var{K}$-congruences on $\Alg{A}$ is denoted by $\Con_\Var{K} \Alg{A}$. If class $\Var{K}$ is closed under formation of subdirect products, then $\Con_\Var{K} \Alg{A}$ forms a complete lattice (see e.g. \cite{GorbunovBookE}). Hence, for an algebra $\Alg{A}$ from a class $\Var{K}$ closed under subdirect products, for any elements $\alg{a}_i,\alg{b}_i \in \Alg{A}, i < n$ there is a smallest $\Var{K}$-congruence $\theta_\Var{K}$ such that $\alg{a}_i \equiv \alg{b}_i \pmod {\theta_\Var{K}}$ for all $i < n$, and we say that the pairs $(\alg{a}_i, \alg{b}_i), i<n$ generate $\theta_\Var{K}$. The $\Var{K}$-congruences generated by a finite set of pairs of elements are called \Def{compact}, and if a $\Var{K}$-congruence is generated  by a single pair $(\alg{a},\alg{b})$, it is called \Def{principal} and is denoted by $\theta_\Var{K}(\alg{a},\alg{b})$ (and we omit reference to $\Var{K}$ when no confusion arises). If $\theta$ is a congruence, by $\pclass{\alg{a}}{\theta}$ we denote a $\theta$-congruence class containing element $\alg{a}$.

As usual, if $\Var{K}$ is a class of algebras, $\CSub\Var{K},\CHom\Var{K}, \CProd\Var{K},\CUProd\Var{K}$ denote respectively the classes of isomorphic copies of all subalgebras , homomorphic images, direct products and ultraproducts of algebras from $\Var{K}$.
Recall (see e.g. \cite{GorbunovBookE}) that a class of algebras $\Var{K}$ is a \Def{variety} if it is closed under $\CHom, \CSub,\CProd$, class $\Var{K}$ is a \Def{quasivariety} if $\Var{K}$ is closed under $\CSub,\CProd,\CUProd$, and $\Var{K}$ is a \Def{prevariety} if $\Var{K}$ is closed under $\CSub,\CProd$. A class of algebras $\Var{K}$ is said to be axiomatizable if for some set of first-order sentences $\Gamma$, we have $\Alg{A} \in \Var{K}$ if and only if every sentence from $\Gamma$ is valid in $\Alg{A}$. The following proposition is a consequence of the Maltsev-Vaught theorem.

\begin{prop} \label{pr-prev-quas} (see e.g. {\cite[Corollary 2.3.3]{GorbunovBookE}}) For any axiomatizable class of algebras $\Var{K}$, the prevariety $\CSub\CProd\Var{K}$ generated by $\Var{K}$ is a quasivariety. In particular, a prevariety $\Var{P}$ is a quasivariety if and only if  $\Var{P}$ is an axiomatizable class of algebras. 
\end{prop}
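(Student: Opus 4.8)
The plan is to prove the first assertion — that $\CSub\CProd\Var{K}$ is a quasivariety — and to read off the equivalence from it. Observe first that $\CSub\CProd\Var{K}$ is automatically closed under $\CSub$ and $\CProd$: a subalgebra of a subalgebra of a product is a subalgebra of a product, and a product of subalgebras of products is a subalgebra of a product of products, hence of a product of algebras from $\Var{K}$. So, by the definition of quasivariety recalled above, it remains only to check closure under $\CUProd$. Granting that, $\CSub\CProd\Var{K}$ is a quasivariety, and being the least prevariety containing $\Var{K}$ it is exactly the prevariety generated by $\Var{K}$; the ``in particular'' clause then follows, because if a prevariety $\Var{P}$ is axiomatizable then $\Var{P}=\CSub\CProd\Var{P}$ is a quasivariety by the first part, while conversely every quasivariety is axiomatizable by quasi-identities, which is (part of) the Maltsev-Vaught theorem.

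Two facts drive the closure under ultraproducts. First, since $\Var{K}$ is axiomatizable and first-order sentences are preserved by the formation of ultraproducts, $\CUProd\Var{K}=\Var{K}$. Second — and this is the crux — for an arbitrary class $\Var{L}$ an ultraproduct of direct products of members of $\Var{L}$ embeds into a direct product of ultraproducts of members of $\Var{L}$:
\[
\CUProd\CProd\Var{L}\ \subseteq\ \CSub\CProd\CUProd\Var{L}.
\]
To establish this I would take $\Alg{C}_i=\prod_{j\in J_i}\Alg{A}_{ij}$ with $\Alg{A}_{ij}\in\Var{L}$, an ultrafilter $\Var{U}$ on the index set $I$, and, setting aside the harmless case $\{i:J_i=\emptyset\}\in\Var{U}$ in which the ultraproduct is trivial, assume every $J_i\neq\emptyset$. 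Indexing a product by the set $F=\prod_{i\in I}J_i$ of choice functions and putting $\Alg{D}_f=\pclass{\prod_{i\in I}\Alg{A}_{i,f(i)}}{\Var{U}}$ for $f\in F$, I would define $\varphi\colon\pclass{\prod_{i\in I}\Alg{C}_i}{\Var{U}}\mapping\prod_{f\in F}\Alg{D}_f$ by $\varphi(\pclass{(c_i)_{i\in I}}{\Var{U}})(f)=\pclass{(c_i(f(i)))_{i\in I}}{\Var{U}}$. Since every operation is computed coordinatewise, $\varphi$ is a well-defined homomorphism; and if $\pclass{(c_i)_{i\in I}}{\Var{U}}\neq\pclass{(c_i')_{i\in I}}{\Var{U}}$ then $\{i:c_i\neq c_i'\}\in\Var{U}$, so choosing for each such $i$ an index $j_i\in J_i$ with $c_i(j_i)\neq c_i'(j_i)$ and any $f\in F$ extending $i\mapsto j_i$, the $f$-coordinates of the two images differ on a set belonging to $\Var{U}$; hence $\varphi$ is injective.

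With these in hand, and using also the elementary inclusion $\CUProd\CSub\Var{L}\subseteq\CSub\CUProd\Var{L}$ (an ultraproduct of subalgebras embeds, coordinatewise, into the corresponding ultraproduct of the overalgebras), I would compute
\[
\CUProd\CSub\CProd\Var{K}\ \subseteq\ \CSub\CUProd\CProd\Var{K}\ \subseteq\ \CSub\CSub\CProd\CUProd\Var{K}\ =\ \CSub\CProd\CUProd\Var{K}\ =\ \CSub\CProd\Var{K},
\]
which is the required closure of $\CSub\CProd\Var{K}$ under $\CUProd$.

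I expect the only genuine obstacle to be the embedding $\CUProd\CProd\Var{L}\subseteq\CSub\CProd\CUProd\Var{L}$ — concretely, the verification that the coordinatewise map $\varphi$ is injective, which forces one to pick the witnessing choice functions over a set in the ultrafilter and to dispose of the empty-index (trivial) factors; the remainder is routine manipulation of the operators $\CSub,\CProd,\CUProd$.
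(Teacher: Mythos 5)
Your proof is correct, but it takes a genuinely different route from the paper, which offers no argument of its own: it simply cites Gorbunov (Corollary 2.3.3) and remarks that the statement is a consequence of the Maltsev--Vaught theorem. In that treatment one quotes the theorem that the quasivariety generated by an arbitrary class $\Var{K}$ is $\CSub\CProd\CUProd\Var{K}$ and then observes that $\CUProd\Var{K}=\Var{K}$ for axiomatizable $\Var{K}$ (preservation of first-order sentences by ultraproducts), so $\CSub\CProd\CUProd\Var{K}=\CSub\CProd\Var{K}$. You instead verify the ultraproduct closure of $\CSub\CProd\Var{K}$ from scratch, via the inclusions $\CUProd\CSub\Var{L}\subseteq\CSub\CUProd\Var{L}$ and $\CUProd\CProd\Var{L}\subseteq\CSub\CProd\CUProd\Var{L}$; your choice-function embedding, with the witnesses $j_i$ chosen over a set belonging to the ultrafilter, is exactly the combinatorial core of the proof of the Maltsev--Vaught theorem, so in effect you have reproved the portion of that theorem that the proposition needs, gaining a self-contained argument at the cost of some routine bookkeeping. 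Two points are glossed but harmless: when $\{i: J_i=\emptyset\}\notin\Var{U}$ you should pass to the isomorphic ultraproduct over the index set $\{i: J_i\neq\emptyset\}\in\Var{U}$ before assuming every $J_i$ nonempty, and well-definedness of $\varphi$ on $\Var{U}$-classes (independence of the chosen representatives) deserves its own line, though both checks are immediate. For the remaining direction of the ``in particular'' clause --- every quasivariety is axiomatizable, indeed by quasi-identities --- you, like the paper, still invoke the classical Maltsev result; your operator computation does not, and need not, reproduce that part.
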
 

\subsubsection{Factor Congruences} The notion of factor congruence plays an important role in what follows.
\begin{definition} \cite[Definition 7.4]{Burris_Sanka}
	Let $\Alg{A}$ be an algebra and $\theta$ be a congruence on $\Alg{A}$. Then $\theta$ is a \Def{factor congruence} if there is a congruence $\theta' \in \Con\Alg{A}$ such that $\theta \cap \theta' = \varepsilon$ and $\theta \lor \theta' = \tau$ and $\theta$ permutes with $\theta'$. The pair $\theta,\theta'$ is called a \Def{pair of factor congruences} and $\theta'$ is a \Def{complement} of $\theta$.
\end{definition}
For any pair of factor congruences the following holds.

\begin{prop} \cite[Theorem 7.5]{Burris_Sanka} \label{pr-factor}
	If $\theta,\theta'$ is a pair of factor congruences on $\Alg{A}$, then
	\[
	\Alg{A} \cong \Alg{A}/\theta \times \Alg{A}/\theta'.
	\]
\end{prop}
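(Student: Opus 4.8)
The plan is to show that the obvious map into the product is an isomorphism. Define $f\colon \Alg{A} \to \Alg{A}/\theta \times \Alg{A}/\theta'$ by $f(a) = (\pclass{a}{\theta}, \pclass{a}{\theta'})$. Since $f$ is the pairing of the two canonical surjections $\Alg{A} \to \Alg{A}/\theta$ and $\Alg{A} \to \Alg{A}/\theta'$, it is automatically a homomorphism, so everything reduces to checking that $f$ is a bijection.

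Injectivity is immediate from $\theta \cap \theta' = \varepsilon$: if $f(a) = f(b)$ then $(a,b) \in \theta$ and $(a,b) \in \theta'$, hence $(a,b) \in \theta \cap \theta' = \varepsilon$, so $a = b$; equivalently $\ker f = \theta \cap \theta' = \varepsilon$. For surjectivity, take any $(\pclass{a}{\theta}, \pclass{b}{\theta'})$ in the product; I need $c \in \Alg{A}$ with $(a,c) \in \theta$ and $(c,b) \in \theta'$, i.e.\ with $(a,b) \in \theta \circ \theta'$. This is where permutability enters: for permuting congruences the relational composite $\theta \circ \theta'$ is itself a congruence and coincides with the join, so $\theta \circ \theta' = \theta \lor \theta' = \tau$. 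Since $\tau$ relates every pair of elements of $\Alg{A}$, the element $c$ exists, and $f$ is onto.

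A bijective homomorphism is an isomorphism, so $\Alg{A} \cong \Alg{A}/\theta \times \Alg{A}/\theta'$. The one step carrying any content is the surjectivity argument; I expect the small lemma $\theta \circ \theta' = \theta \lor \theta'$ for permuting congruences to be the only thing worth citing (or proving by the usual short induction on the alternating-composite description of the join), everything else being formal.
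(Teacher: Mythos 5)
Your proof is correct and is exactly the standard argument: the paper offers no proof of its own, simply citing Burris--Sankappanavar (Theorem 7.5), and your map $a \mapsto (\pclass{a}{\theta},\pclass{a}{\theta'})$, with injectivity from $\theta \cap \theta' = \varepsilon$ and surjectivity from permutability via $\theta \circ \theta' = \theta \lor \theta' = \tau$, is precisely the proof given in that source. Nothing is missing, including the small lemma that permuting congruences have $\theta \circ \theta'$ as their join, which you correctly flag as the only step with content.
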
 

\subsection{Discriminator Varieties} Let us recall a definition of discriminator varieties: an important example of varieties in which for every algebra each compact congruence is a factor congruence.  

A variety $\var$ is said to be \Def{discriminator} (see e.g. \cite{Burris_Sanka}) if there is a term $t(x,w,z)$ such that for every nontrivial algebra $\Alg{A} \in \var$ and any elements $\alg{a},\alg{b},\alg{c} \in \Alg{A}$,
\[
t(\alg{a},\alg{b},\alg{c}) = \begin{cases} \alg{a} \quad \text{if } \alg{a} \neq \alg{b} \\
\alg{c} \quad \text{if } \alg{a} = \alg{b}.
\end{cases}
\]

Discriminator varieties have very nice properties. We will use the following (see \cite[Theorem 1.1]{Andreka_Jonsson_Nemeti_1991}).

\begin{prop} \label{pr-discr} Let $\var$ be a discriminator variety. Then the following holds for every algebra $\Alg{A} \in \var$.
	\begin{itemize}
		\item[(a)] $\var$ is semisimple, that is every subdirectly irreducible algebra is simple;
		\item[(b)] If algebra $\Alg{A}$ is finite, then $\Alg{A}$ is a direct product of simple algebras;
		\item[(c)] Every compact congruence on $\Alg{A}$ is principal;
		\item[(d)] Every principal congruence on $\Alg{A}$ is a factor congruence.
	\end{itemize}
\end{prop}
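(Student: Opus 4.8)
The plan is to squeeze from the discriminator term $t$ the identities and the one auxiliary operation I need, to obtain (a) (semisimplicity) from J\'onsson's lemma, and then to read (b)--(d) off the subdirect decomposition into simple algebras that (a) provides. To begin, the equations $t(x,x,z)=z$, $t(x,y,x)=x$ and $t(x,y,y)=x$ hold in every nontrivial $\Alg{A}\in\var$ straight from the definition of $t$, and hold trivially in the one-element algebra, so they are identities of $\var$. In particular $t$ is a Maltsev term, hence $\var$ is congruence permutable; and, as is standard for discriminator varieties, one builds a majority term from $t$, so $\var$ is also congruence distributive. I will also use the \emph{switching term} $s(x,y,u,v):=t(t(x,y,u),t(x,y,v),v)$, for which the identities above give $s(x,x,u,v)=u$ identically, while on any nontrivial algebra of $\var$ one has $s(x,y,u,v)=u$ when $x=y$ and $s(x,y,u,v)=v$ when $x\ne y$.

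For (a), write $\var=\CHom\CSub\CProd(\class{K})$ where $t$ acts as a discriminator on each member of $\class{K}$. The statement ``$t$ is a discriminator on $\Alg{A}$'' is expressed by two universal first-order sentences, which survive passage to subalgebras and to ultraproducts; so $t$ is a discriminator on every algebra in $\CSub\CUProd(\class{K})$. If $t$ is a discriminator on a nontrivial $\Alg{B}$ and $\psi\ne\varepsilon$ is a congruence of $\Alg{B}$, then picking $(a,b)\in\psi$ with $a\ne b$ yields $c=t(a,a,c)\mathrel{\psi}t(a,b,c)=a$ for every $c\in\Alg{B}$, so $\psi=\tau$; thus $\Alg{B}$ is simple, and its only homomorphic images are trivial algebras and copies of itself. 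Congruence distributivity lets me invoke J\'onsson's lemma: $\var_{SI}\subseteq\CHom\CSub\CUProd(\class{K})$, and a nontrivial member of that class, being a homomorphic image of a simple algebra as above, is simple. Together with Birkhoff's subdirect representation theorem this says $\var$ is semisimple: every algebra of $\var$ is a subdirect product of simple algebras.

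Part (c) reduces, by induction on the number of generating pairs, to showing that $\theta(a,b)\lor\theta(c,d)$ is principal, and I claim it equals $\theta(u,v)$ for $u:=s(a,b,c,a)$ and $v:=s(a,b,d,b)$. Modulo $\psi:=\theta(a,b)\lor\theta(c,d)$ one has $a\equiv b$, so $u\equiv s(a,a,c,a)=c\equiv d=s(a,a,d,b)\equiv v$, giving $\theta(u,v)\subseteq\psi$. Conversely, if $(a,b)\notin\theta(u,v)$ (resp.\ $(c,d)\notin\theta(u,v)$), then $\Alg{A}/\theta(u,v)$ is nontrivial and, by (a), has a simple quotient separating the relevant images; evaluating $s$ there as a genuine switching term contradicts $\bar u=\bar v$, so in fact $\psi\subseteq\theta(u,v)$. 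For (d), fix a subdirect embedding $\Alg{A}\hookrightarrow\prod_{i\in I}\Alg{S}_i$ into simple algebras and put $J:=\{i:a_i=b_i\}$; since $t(a,b,x)$ and $t(a,b,y)$ agree in coordinate $i$ precisely when $i\notin J$ or $x_i=y_i$, the set $\{(x,y):t(a,b,x)=t(a,b,y)\}$ equals $\ker\bigl(\Alg{A}\to\prod_{i\in J}\Alg{S}_i\bigr)$, and this congruence both contains $(a,b)$ (as $t(a,b,a)=a=t(a,b,b)$) and is contained in $\theta(a,b)$ (as $x=t(a,a,x)\equiv t(a,b,x)=t(a,b,y)\equiv t(a,a,y)=y$ modulo $\theta(a,b)$), hence equals $\theta(a,b)$. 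Its complement $\theta':=\ker\bigl(\Alg{A}\to\prod_{i\notin J}\Alg{S}_i\bigr)$ meets $\theta(a,b)$ in $\varepsilon$ (the embedding is injective), permutes with it (congruence permutability), and satisfies $\theta(a,b)\circ\theta'=\tau$ since for any $x,y$ the element $z:=s(a,b,x,y)$ has $(x,z)\in\theta(a,b)$ and $(z,y)\in\theta'$; so $\theta(a,b)$ is a factor congruence. Finally, for (b) a finite $\Alg{A}$ is a subdirect product of finitely many simple quotients $\Alg{A}/\mu_j$, $j\in F$, which I take to form an irredundant family; each $\mu_j$ is then a coatom of $\Con\Alg{A}$, so $\mu_{j_0}\lor\bigcap_{j\ne j_0}\mu_j=\tau$ by distributivity, $(\mu_{j_0},\bigcap_{j\ne j_0}\mu_j)$ is a pair of factor congruences, and Proposition~\ref{pr-factor} splits off the simple factor $\Alg{A}/\mu_{j_0}$; iterating gives $\Alg{A}\cong\prod_{j\in F}\Alg{A}/\mu_j$.

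I expect (d) to be the main obstacle. Parts (a)--(c) only use that the discriminator instantly collapses any nontrivial congruence, plus bookkeeping with the switching term, whereas (d) genuinely needs a complement for $\theta(a,b)$, and no term in $a,b$ produces one in general; this is what forces the detour through the subdirect representation from (a) and the use of congruence permutability to convert the resulting subdirect decomposition into an honest direct product. The recurring point of care is to keep separate the identities valid throughout $\var$ and the properties of $t$ and $s$ that hold only on the nontrivial (equivalently, simple) members.
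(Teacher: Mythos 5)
The paper itself gives no proof of this proposition: it is imported verbatim from Andr\'eka--J\'onsson--N\'emeti (their Theorem 1.1), so there is no internal argument to compare against, and what you have written is a correct, self-contained reconstruction of the standard one. Your chain --- $t$ is a Maltsev (indeed Pixley) term, so $\var$ is arithmetical; the discriminator condition is universal, hence persists through $\CSub\CUProd\class{K}$, and J\'onsson's lemma then makes every subdirectly irreducible member simple, in fact a copy of an algebra on which $t$ is a genuine discriminator (this sharper form is what you silently use when you evaluate $t$ and $s$ in the simple quotients in (c) and in the coordinates $\Alg{S}_i$ in (d), so it is worth stating explicitly); the switching term $s$ collapses joins of principal congruences and produces the element $z=s(a,b,x,y)$ witnessing $\theta(a,b)\circ\theta'=\tau$ in the subdirect representation --- is exactly the textbook route, and all the computations check out. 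Two small remarks. In (b) the appeal to distributivity is not what does the work: since $\mu_{j_0}$ is a coatom, irredundancy alone forces $\bigcap_{j\ne j_0}\mu_j\not\subseteq\mu_{j_0}$ and hence $\mu_{j_0}\lor\bigcap_{j\ne j_0}\mu_j=\tau$, with permutability supplying the factor pair. And you have (rightly) read ``discriminator variety'' in the standard way, as $\CHom\CSub\CProd\class{K}$ for a class $\class{K}$ with a common discriminator term; the paper's literal wording (discriminator behaviour on every nontrivial member of $\var$) cannot be taken at face value, since it already fails for $\Alg{A}\times\Alg{A}$ with $\Alg{A}$ nontrivial, so your reading is the one under which the proposition, and the cited source, make sense.
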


\subsection{Finitely Presented Algebras}
In this section we recall the definition of finitely presented algebras.

A pair $\pair{X}{\Delta}$, where $X$ is a set of variables and $\Delta$ is a set of atomic formulas, all variables of which are in $X$, is called a \Def{defining pair}.

\begin{definition} \label{def-finpres}
	Let $\Var{K}$ be a prevariety. A pair  $\pair{X}{\Delta}$ \Def{defines} and algebra $\Alg{A} \in \Var{K}$ if there is a map $\varphi: X \mapping \Alg{A}$ such that
	\begin{itemize}
		\item[(a)] the set $\varphi(X)$ generates $\Alg{A}$ and every formula from $\Delta$ holds in $\Alg{A}$ under valuation $\varphi$;
		\item[(b)] For every algebra $\Alg{B} \in \Var{K}$ and map $\psi: X \mapping \Alg{B}$, if every formula from $\Delta$ holds under valuation $\psi$, then $\psi$ can be extended to a homomorphism of $\Alg{A}$ to $\Alg{B}$,
	\end{itemize}
	and $\varphi$ is called a \Def{defining valuation}.
\end{definition}

An algebra defined by a pair $\pair{X}{\Delta}$ is unique (up to an isomorphism) and it is denoted by $\Falg{X,\Delta}{K}$. Let us recall (see \cite[Theorem 2.1.3]{GorbunovBookE}) that in every prevariety $\Var{K}$, algebra $\Falg{X,\Delta}{K}$ exists for any defining pair $\pair{X}{\Delta}$.  

\begin{prop} \label{pr_fp}
	Let $\pair{X}{\Delta}$ be a defining pair and $\Var{K}_0$ and $\Var{K}_1$ be prevarieties such that $\Var{K}_0 \subseteq \Var{K}_1$. Then
	\[
	\begin{array}{ll}
	(a) & \falg{X,\Delta}{K_0} \text{ is a homomorphic image of } \falg{X,\Delta}{K_1};\\
	
	(b) & 	\text{if } \falg{X,\Delta}{K_1} \in \Var{K}_0 \text{, then } \falg{X,\Delta}{K_0} \cong \falg{X,\Delta}{K_1}.
	\end{array}
	\]
\end{prop}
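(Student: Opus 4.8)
The plan is to prove Proposition \ref{pr_fp} directly from the universal property in Definition \ref{def-finpres}, using the fact that $\Var{K}_0 \subseteq \Var{K}_1$ means every algebra in the smaller prevariety is already available as a ``test algebra'' in the defining condition for the larger one.

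For part (a), let $\varphi_1 : X \mapping \falg{X,\Delta}{K_1}$ be a defining valuation in $\Var{K}_1$ and $\varphi_0 : X \mapping \falg{X,\Delta}{K_0}$ a defining valuation in $\Var{K}_0$. Since $\falg{X,\Delta}{K_0} \in \Var{K}_0 \subseteq \Var{K}_1$, the algebra $\falg{X,\Delta}{K_0}$ is an algebra of $\Var{K}_1$ in which every formula of $\Delta$ holds under $\varphi_0$ (by clause (a) of Definition \ref{def-finpres} applied in $\Var{K}_0$); hence, by clause (b) of the definition applied to $\falg{X,\Delta}{K_1}$, the map $\varphi_0$ extends to a homomorphism $h : \falg{X,\Delta}{K_1} \mapping \falg{X,\Delta}{K_0}$. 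The image $h(\falg{X,\Delta}{K_1})$ contains $h(\varphi_1(X)) = \varphi_0(X)$, which generates $\falg{X,\Delta}{K_0}$ by clause (a); therefore $h$ is onto, and $\falg{X,\Delta}{K_0}$ is a homomorphic image of $\falg{X,\Delta}{K_1}$.

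For part (b), assume in addition that $\falg{X,\Delta}{K_1} \in \Var{K}_0$. Then $\falg{X,\Delta}{K_1}$ is an algebra of $\Var{K}_0$ in which every formula of $\Delta$ holds under $\varphi_1$, so by clause (b) applied to $\falg{X,\Delta}{K_0}$ the map $\varphi_1$ extends to a homomorphism $g : \falg{X,\Delta}{K_0} \mapping \falg{X,\Delta}{K_1}$; as before $g$ is onto because its image contains the generating set $\varphi_1(X)$. Now consider the composites $g \circ h : \falg{X,\Delta}{K_1} \mapping \falg{X,\Delta}{K_1}$ and $h \circ g : \falg{X,\Delta}{K_0} \mapping \falg{X,\Delta}{K_0}$. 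On the generating set $\varphi_1(X)$ we have $g(h(\varphi_1(x))) = g(\varphi_0(x)) = \varphi_1(x)$, so $g \circ h$ agrees with the identity on a generating set and hence equals the identity; symmetrically $h \circ g$ is the identity on $\falg{X,\Delta}{K_0}$. Thus $h$ and $g$ are mutually inverse isomorphisms, giving $\falg{X,\Delta}{K_0} \cong \falg{X,\Delta}{K_1}$.

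The argument is essentially a diagram chase, so there is no single hard step; the only point requiring a little care is the observation that a homomorphism out of a finitely presented algebra is determined by its values on the defining generators, which is exactly what forces the composites above to be identities. One should also note at the outset that the relevant defining valuations and presented algebras exist in each prevariety by \cite[Theorem 2.1.3]{GorbunovBookE}, as recalled in the text. If a reference is preferred over the explicit chase, this proposition is standard and can be cited, but the self-contained proof above is short enough to include.
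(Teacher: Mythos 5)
Your proof is correct. Part (a) is exactly the paper's argument: use clause (a) of Definition \ref{def-finpres} for $\varphi_0$, note $\falg{X,\Delta}{K_0} \in \Var{K}_0 \subseteq \Var{K}_1$, and apply clause (b) for $\falg{X,\Delta}{K_1}$ to get a homomorphism onto $\falg{X,\Delta}{K_0}$, surjectivity coming from the fact that $\varphi_0(X)$ generates. For part (b) your route differs slightly from the paper's: the paper observes that, since every test algebra of $\Var{K}_0$ lies in $\Var{K}_1$, the algebra $\falg{X,\Delta}{K_1}$ itself satisfies both clauses of Definition \ref{def-finpres} relative to $\Var{K}_0$ once it belongs to $\Var{K}_0$, and then invokes the uniqueness (up to isomorphism) of the algebra defined by a given pair; you instead build the second homomorphism $g:\falg{X,\Delta}{K_0} \mapping \falg{X,\Delta}{K_1}$ from the universal property of $\falg{X,\Delta}{K_0}$ and check that $g\circ h$ and $h\circ g$ are the identity on the respective generating sets, hence everywhere. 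This is in effect an in-line proof of the uniqueness fact the paper cites, so the two arguments are equivalent in substance; yours is more self-contained, the paper's is shorter because it delegates the diagram chase to the general uniqueness statement. Both hinge on the same small point you flag explicitly: a homomorphism out of the defined algebra is determined by its values on the image of $X$.
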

\begin{proof} Let $\pair{X}{\Delta}$ be a defining pair and $\Var{K}_0$ and $\Var{K}_1$ be some prevarieties such that $\Var{K}_0 \subseteq \Var{K}_1$. Suppose $\varphi_i, i=0,1$ are defining valuations. Let us consider the map $\psi: \falg{X,\Delta}{K_1} \mapping \falg{X,\Delta}{K_0}$ defined in the following way:
	\[
	\psi: \varphi_1(x) \mapsto \varphi_0(x) 
	\]
	By Definition \ref{def-finpres}(a), all formulas from $\Delta$ are valid under valuation $\varphi_0$. Hence, because $\falg{X,\Delta}{K_0} \in \Var{K}_0 \subseteq \Var{K}_1$, by Definition \ref{def-finpres}(b), the map $\psi$ can be extended to a homomorphism which is a homomorphism of $\falg{X,\Delta}{K_1}$  onto $\falg{X,\Delta}{K_0}$, for elements $\varphi_0(x), x \in X$ generate algebra $\falg{X,\Delta}{K_0}$. 
	
	Suppose that $\falg{X,\Delta}{K_1} \in \Var{K}_0$. It is clear that, because $\Var{K}_0 \subseteq \Var{K}_1$, if conditions of Definition \ref{def-finpres} hold in $\Var{K}_1$, they hold in $\Var{K}_0$. Thus, $\falg{X,\Delta}{K_1}$ is an algebra defined in $\Var{K}_0$ by pair $\pair{X}{\Delta}$, and hence, $\falg{X,\Delta}{K_1}$ is isomorphic to $\falg{X,\Delta}{K_0}$, for in a given prevariety, a defining pair defines an algebra uniquely up to an isomorphism.
\end{proof}	

An algebra $\Alg{A}$ is said to be \Def{finitely $\Var{K}$-presented} if $\Alg{A}$ can be define in $\Var{K}$ by a defining pair $\pair{X}{\Delta}$ in which $X$ and $\Delta$ are finite sets.

Let us observe that an algebra $\Alg{A}$ is finitely $\Var{K}$-presented if and only if there is a compact $\Var{K}$-congruence $\theta_\Var{K}(\Alg{A})$ on $\falg{n}{\Var{K}}$ such that
\[
\Alg{A} \cong \falg{n}{\Var{K}}/\theta_\Var{K}(\Alg{A}),
\]
where $\falg{n}{\Var{K}}$ is a free algebra of a finite rank $n$.

Finitely $\Var{K}$-presented systems play an important role in the theory of quasivarieties. In particular, the following holds.

\begin{prop} \label{pr-qfpgen} \cite[Proposition 2.1.18]{GorbunovBookE} In a prevariety $\Var{K}$ every algebra is a direct limit of finitely $\Var{K}$-presented algebras from $\Var{K}$. Every quasivariety $\Var{Q}$ is generated by its finitely $\Var{Q}$-presented algebras.
\end{prop}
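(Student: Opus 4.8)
The plan is to prove the two assertions in turn, getting the second as a quick consequence of the first. For the first, fix $\Alg{A}\in\Var{K}$ and choose a set $X$ and a surjective homomorphism $\varphi\colon\Falg{X}{K}\mapping\Alg{A}$, with kernel $\theta\in\Con_\Var{K}\Falg{X}{K}$. Let $I$ be the set of defining pairs $\pair{Y}{\Delta}$ with $Y\subseteq X$ finite and $\Delta$ a finite set of atomic formulas in the variables of $Y$ holding in $\Alg{A}$ under $\varphi$, ordered by $\pair{Y}{\Delta}\le\pair{Y'}{\Delta'}$ iff $Y\subseteq Y'$ and $\Delta\subseteq\Delta'$; then $I$ is directed. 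For $\pair{Y}{\Delta}\le\pair{Y'}{\Delta'}$, Definition~\ref{def-finpres}(b) (every formula of $\Delta\subseteq\Delta'$ holds under the defining valuation of $\Falg{Y',\Delta'}{K}$) yields a canonical homomorphism $\Falg{Y,\Delta}{K}\mapping\Falg{Y',\Delta'}{K}$ fixing the generators; these are mutually compatible by uniqueness of extensions, and each $\Falg{Y,\Delta}{K}$ maps canonically to $\Alg{A}$ via $y\mapsto\varphi(y)$, compatibly with the transition maps. This induces a homomorphism $h\colon\varinjlim_{I}\Falg{Y,\Delta}{K}\mapping\Alg{A}$, and the task is to show $h$ is an isomorphism.

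Surjectivity is immediate, since the image of $h$ contains every $\varphi(y)$, $y\in X$ (each factoring through $\Falg{\{y\},\varnothing}{K}$), hence the subalgebra generated by $\varphi(X)=\Alg{A}$. For injectivity, represent two elements of the colimit with the same $h$-image by terms $s,t$ over a common index $\pair{Y_0}{\Delta_0}$; reading $s,t$ as elements of $\Falg{X}{K}$, we have $\varphi(s)=\varphi(t)$, that is, $(s,t)\in\theta$. Since $\theta$ is the directed union of the finitely generated (``compact'') $\Var{K}$-congruences below it, $(s,t)$ lies in the $\Var{K}$-congruence $\psi$ on $\Falg{X}{K}$ generated by some finite set of pairs $(u_i,v_i)$, $i<k$, with $\psi\subseteq\theta$; in particular each $u_i\approx v_i$ holds in $\Alg{A}$ under $\varphi$. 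Choose a finite nonempty $Y_1\supseteq Y_0$ containing the variables of all $u_i,v_i$ and set $\Delta_1=\Delta_0\cup\{u_i\approx v_i:i<k\}$, so $\pair{Y_1}{\Delta_1}\in I$ sits above $\pair{Y_0}{\Delta_0}$. Fix a retraction $r\colon\Falg{X}{K}\mapping\Falg{Y_1}{K}$ fixing $Y_1$ (sending the remaining generators to a fixed element of $Y_1$), and let $\psi_1$ be the $\Var{K}$-congruence on $\Falg{Y_1}{K}$ generated by the $(u_i,v_i)$. Since $r$ is onto and $\Falg{Y_1}{K}/\psi_1\in\Var{K}$, the preimage $r^{-1}(\psi_1)$ is a $\Var{K}$-congruence on $\Falg{X}{K}$ containing all $(u_i,v_i)$, whence $\psi\subseteq r^{-1}(\psi_1)$; as $(s,t)\in\psi$ and $r$ fixes $s$ and $t$, we get $(s,t)\in\psi_1\subseteq\theta_\Var{K}(\Delta_1)$, so $s$ and $t$ already coincide in $\Falg{Y_1,\Delta_1}{K}$, hence in the colimit. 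Thus $h$ is an isomorphism and $\Alg{A}=\varinjlim_I\Falg{Y,\Delta}{K}$ is a direct limit of finitely $\Var{K}$-presented algebras from $\Var{K}$. The delicate point is exactly this transfer: restriction of congruences to a subalgebra is ill-behaved in general, and it is the surjectivity of $r$ that keeps $r^{-1}(\psi_1)$ a $\Var{K}$-congruence and lets the argument close.

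For the second assertion, let $\Var{Q}^{\ast}$ be the quasivariety generated by the finitely $\Var{Q}$-presented algebras; trivially $\Var{Q}^{\ast}\subseteq\Var{Q}$. Conversely take $\Alg{A}\in\Var{Q}$; since a quasivariety is a prevariety, the first assertion writes $\Alg{A}=\varinjlim_{i\in I}\Alg{A}_i$ with each $\Alg{A}_i$ finitely $\Var{Q}$-presented and in $\Var{Q}$. Let $\mathcal{F}$ be the filter on $I$ of all subsets containing some up-set $\{j:j\ge i\}$; it is proper because $I$ is directed. Sending each element of the colimit to the $\mathcal{F}$-class of any thread that eventually represents it along an up-set embeds $\varinjlim_i\Alg{A}_i$ into the reduced product $\prod_i\Alg{A}_i/\mathcal{F}$: the assignment is well defined and a homomorphism since two threads representing the same element agree on an up-set, and injective since distinct elements are kept apart on a whole up-set. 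Finally $\prod_i\Alg{A}_i/\mathcal{F}$ embeds into $\prod_{\mathcal{U}\supseteq\mathcal{F}}\bigl(\prod_i\Alg{A}_i/\mathcal{U}\bigr)$, the product taken over all ultrafilters refining $\mathcal{F}$, so $\prod_i\Alg{A}_i/\mathcal{F}\in\CSub\CProd\CUProd\{\Alg{A}_i:i\in I\}\subseteq\Var{Q}^{\ast}$, and therefore $\Alg{A}\in\CSub\Var{Q}^{\ast}=\Var{Q}^{\ast}$. The remaining ingredients — existence of the free algebras, the algebraicity of $\Con_\Var{K}\Falg{X}{K}$, and closure of quasivarieties under $\CSub,\CProd,\CUProd$ — are standard.
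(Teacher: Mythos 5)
The paper offers no proof of this proposition — it is quoted directly from Gorbunov \cite[Proposition 2.1.18]{GorbunovBookE} — so there is no in-paper argument to compare against; your proof is correct and follows the standard route: realize $\Alg{A}$ as the directed colimit of the algebras $\Falg{Y,\Delta}{K}$ over finite subpresentations of a presentation of $\Alg{A}$, with the retraction $r\colon\Falg{X}{K}\mapping\Falg{Y_1}{K}$ correctly handling the only delicate point (transferring the compact $\Var{K}$-congruence down to finitely many variables), and then deduce the quasivariety statement from closure of quasivarieties under $\CSub,\CProd,\CUProd$ via the embedding of a direct limit into a reduced product and of a reduced product into a product of ultraproducts. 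The only cosmetic remark is that you should restrict the index set to nonempty $Y$ (as you implicitly do when choosing $Y_1$ nonempty), so that the retraction and, in types without constants, the free algebras involved are well defined.
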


\subsection{Classes of Algebras}

In this section we recall information about classes of algebras that we consider in the squeal.

\subsubsection{Heyting Algebras} In this section we recall the definition and some properties of Heyting algebras that we use in the squeal. All necessary details can be found in \cite{Rasiowa_Sikorski} (where Heyting algebras are called pseudo-Boolean algebras.)

\Def{Heyting algebra} is an algebra $\Alg{A} = (\alg{A};\land,\lor,\impl,\zero,\one)$, where $\Alg{A} = (\alg{A};\land,\lor,\zero,\one)$ is a bounded distributive lattice and $\impl$ is a relative pseudocomplement, that is for each $\alg{a},\alg{b},\alg{c} \in \alg{A}$ we have \[
\alg{a} \land \alg{b} \leq \alg{c} \text{ if and only if } \alg{c} \leq \alg{a} \impl \alg{b}.
\] 
As usual, $\alg{a} \impl \zero$ is abbreviated to $\neg \alg{a}$, and $\neg$ is a pseudocomplement. 

The set of all Heyting algebras forms a variety denoted by $\Heyt$.

\begin{definition} \label{def-filter}
	If $\Alg{A}$ is a Heyting algebra a subset $\alg{F} \subseteq \Alg{A}$ is a \Def{Heyting filter} on $\Alg{A}$ (h-filter for short) if the following holds: (a) $\one \in \alg{F}$; (b) for every $\alg{a},\alg{b} \in \Alg{A}$, if $\alg{a} \leq \alg{b}$ and $\alg{a} \in \alg{F}$, then $\alg{b} \in \alg{F}$; (c)  $\alg{a} \land \alg{b} \in \alg{F}$ for every $\alg{a},\alg{b} \in \alg{F}$.
\end{definition}

It is clear that the meet of an arbitrary family of h-filters is an h-filter. If $\alg{B} \subseteq \Alg{A}$, by $\pfltr{\alg{B}}$ we denote the smallest h-filter containing $\alg{B}$, and we say that h-filter $\pfltr{\alg{B}}$ \Def{is generated by} $\alg{B}$. If $\alg{B} = \{\alg{b}\}$, we write $\pfltr{\alg{b}}$ omitting curly brackets.

There is a well-known one-to-one correspondence between h-filters and congruences: if $\theta$ is a congruence on a Heyting algebra $\Alg{A}$, then the set $\alg{F}_\theta \bydef \set{\alg{a} \in \Alg{A}}{\alg{a} \equiv \one \pmod{\theta}}$ forms an h-filter. And every congruence $\theta$ on a Heyting algebra $\Alg{A}$ is completely defined by congruence class $\alg{F}_\theta$.

Let $\Alg{A}$ be a Heyting algebra. An element $\alg{a} \in \Alg{A}$ is said to be \Def{regular} if $\neg\neg\alg{a} =  \alg{a}$ and $\alg{a}$ is said to be \Def{dense} if $\neg\alg{a} = \zero$. It is not hard to see that a Heyting algebra is Boolean if and only if $\one$ is the only its dense element, and the following holds.

\begin{prop} \label{pr-dense}
	Let $\Alg{A}$ be a Heyting algebra and $\theta$ be a congruence on $\Alg{A}$. Then $\Alg{A}/\theta$ is a Boolean algebra if and only if every dense element of $\Alg{A}$ is in $\alg{F}_\theta$.
\end{prop}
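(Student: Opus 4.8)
The plan is to use the characterization recalled just above the statement: a Heyting algebra is Boolean if and only if $\one$ is its only dense element. I will also use two elementary facts. First, in any Heyting algebra the element $\alg{a} \lor \neg\alg{a}$ is dense, because $\neg(\alg{a} \lor \neg\alg{a}) = \neg\alg{a} \land \neg\neg\alg{a} = \zero$. Second, the canonical surjection $\pi \colon \Alg{A} \mapping \pclass{\Alg{A}}{\theta}$ is a homomorphism of Heyting algebras, so it preserves $\lor$, $\neg$, $\zero$ and $\one$; in particular $\pi(\alg{a}) = \one$ in $\pclass{\Alg{A}}{\theta}$ precisely when $\alg{a} \in \alg{F}_\theta$.

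For the ``only if'' direction, I would assume $\pclass{\Alg{A}}{\theta}$ is Boolean and let $\alg{d}$ be a dense element of $\Alg{A}$, so $\neg\alg{d} = \zero$. Applying $\pi$ and using that it preserves $\neg$ and $\zero$, we get $\neg\pi(\alg{d}) = \pi(\neg\alg{d}) = \zero$, so $\pi(\alg{d})$ is dense in $\pclass{\Alg{A}}{\theta}$. Since $\pclass{\Alg{A}}{\theta}$ is Boolean, its only dense element is $\one$, whence $\pi(\alg{d}) = \one$, i.e. $\alg{d} \in \alg{F}_\theta$.

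For the ``if'' direction, assume every dense element of $\Alg{A}$ lies in $\alg{F}_\theta$. By the recalled characterization it suffices to show that $\one$ is the only dense element of $\pclass{\Alg{A}}{\theta}$. Let $\bar{\alg{b}}$ be a dense element of $\pclass{\Alg{A}}{\theta}$ and write $\bar{\alg{b}} = \pi(\alg{b})$ for some $\alg{b} \in \Alg{A}$. The element $\alg{b} \lor \neg\alg{b}$ is dense in $\Alg{A}$, hence lies in $\alg{F}_\theta$, so $\pi(\alg{b} \lor \neg\alg{b}) = \one$. On the other hand, $\pi(\alg{b} \lor \neg\alg{b}) = \bar{\alg{b}} \lor \neg\bar{\alg{b}} = \bar{\alg{b}} \lor \zero = \bar{\alg{b}}$, using that $\bar{\alg{b}}$ is dense in the quotient. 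Therefore $\bar{\alg{b}} = \one$, as required.

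There is essentially no obstacle here: the argument is a short transport of the dense-element description of Boolean algebras across the quotient homomorphism. The only points requiring (routine) care are that $\pi$ preserves $\neg$ — which follows from its preserving $\impl$ and $\zero$ — and that the h-filter $\alg{F}_\theta$ is exactly the $\theta$-class of $\one$, both of which are available from the material above.
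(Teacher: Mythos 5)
Your argument is correct: both directions transport the dense-element characterization of Boolean algebras along the quotient map, using that $\alg{F}_\theta$ is the $\theta$-class of $\one$ and that $\alg{b}\lor\neg\alg{b}$ is always dense. The paper states Proposition \ref{pr-dense} without proof as a well-known fact, and your proof is exactly the standard argument it implicitly relies on, so nothing further is needed.
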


\subsubsection{Monadic Heyting Algebras}\footnote{All necessary details regarding monadic Heyting algebras the reader can find in \cite{G_Bez_MIPC_1}.}
A \Def{Monadic Heyting Algebra} is an algebra $\Alg{A} = (\alg{A};\land,\lor,\impl,\zero,\one,\Box,\Diamond)$, where $\Alg{A} = (\alg{A};\land,\lor,\impl,\zero,\one)$ is a Heyting algebra and $\Box$ and $\Diamond$ satisfy the following properties:
for each $\alg{a},\alg{b} \in \alg{A}$ we have 
\[
\begin{array}{lll}
\Box \alg{a} \leq \alg{a} & \quad &\alg{a} \leq \Diamond\alg{a};\\
\Box(\alg{a} \land \alg{b}) = \Box\alg{a} \land \Box\alg{b} &  \quad & \Diamond\alg{a} \lor \Diamond\alg{b} = \Diamond(\alg{a} \lor \alg{b});\\
\Box\one = \one &  \quad & \Diamond\zero = \zero;\\
\Box(\Box\alg{a} \lor \alg{b}) = \Box\alg{a} \lor\Box\Alg{b} & \quad & \Diamond(\Diamond\alg{a} \land \alg{b}) = \Diamond\alg{a} \land \Diamond\alg{b}. 
\end{array}
\] 

A set of all monadic Heyting algebras forms a variety denoted by $\VMHA$ (see e.g. \cite{G_Bez_MIPC_1}). 

Let $\two$ be a two-element monadic Heyting algebra. Obviously $\two$ is a subalgebra of any nontrivial $\VMHA$-algebra.

\subsubsection{$\VWSFi$-Algebras}

$\VWSFi$-algebra is a monadic Heyting algebra in which open elements, that is the elements $\alg{a}$ such that $\Box\alg{a} = \alg{a}$, form a Boolean subalgebra. $\VWSFi$-algebras admit identity $\Diamond x \cong \neg\Box\neg x$, thus, we can view $\VWSFi$-algebras as algebras of type $(\alg{A};\land\lor,\to,\zero,\one,\Box)$. An algebra $(\alg{A};\land\lor,\to,\zero,\one,\Box)$ is a $\VWSFi$-algebra if $(\alg{A};\land\lor,\to,\zero,\one)$ is a Heyting algebra (an \Def{h-reduct} of $\Alg{A}$) and the following holds: for every $\alg{a},\alg{b} \in \alg{A}$
\begin{equation}
	\begin{array}{ll}
	(a) & \Box\one = \one;\\
	(b) & \Box\alg{a} = \alg{a};\\
	(c) & \Box(\alg{a} \land \alg{b}) = \Box\alg{a} \land \Box\alg{b};\\
	(d) & \Box\Box\alg{a} = \alg{a};\\
	(e) & \Box(\alg{a} \lor \Box\alg{b}) = \Box\alg{a} \lor \Box\alg{b}.	
	\end{array} \label{eq-ws5} \tag{WS5}
\end{equation}

Similarly to monadic Heyting algebras, $\two$ is a subalgebra of any nontrivial $\VWSFi$-algebra.

\begin{prop}[see e.g. {\cite{G_Bez_MIPC_1}}] \label{pr-WS5} 
	Let $\Alg{A}$ be a $\VWSFi$-algebra. Then
	\begin{itemize}
		\item[(a)] $\Alg{A}$ is subdirectly irreducible if and only if it is simple;
		\item[(b)] $\Alg{A}$ is simple if and only if it has exactly two open elements: $\zero$ and $\one$;
		\item[(c)] If $\Alg{A}$ is finite, then $\Alg{A}$ is a direct product of simple $\VWSFi$-algebras.
	\end{itemize}
\end{prop}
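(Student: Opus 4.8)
The plan is to reduce all three statements to the Boolean algebra $\Alg{A}^{\circ}$ of open elements of $\Alg{A}$, by first describing $\Con\Alg{A}$ in terms of the filters of $\Alg{A}^{\circ}$. As for any monadic Heyting algebra, a congruence $\theta$ of a $\VWSFi$-algebra $\Alg{A}$ is completely determined by its $\one$-class $\alg{F}_{\theta}$, and $\alg{F}_{\theta}$ is an h-filter closed under $\Box$ (if $\alg{a}\equiv\one$ then $\Box\alg{a}\equiv\Box\one=\one$). Conversely, if $\alg{F}$ is an h-filter closed under $\Box$, then the induced Heyting congruence respects $\Box$: from $\Box\alg{a}\land\Box(\alg{a}\impl\alg{b})=\Box(\alg{a}\land\alg{b})\leq\Box\alg{b}$ one gets $\Box(\alg{a}\impl\alg{b})\leq\Box\alg{a}\impl\Box\alg{b}$, so whenever $(\alg{a}\impl\alg{b})\land(\alg{b}\impl\alg{a})\in\alg{F}$, also $(\Box\alg{a}\impl\Box\alg{b})\land(\Box\alg{b}\impl\Box\alg{a})\in\alg{F}$; and it respects $\Diamond$ because $\Diamond\alg{x}\eq\neg\Box\neg\alg{x}$ is built from operations already respected. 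Since $\Box\alg{a}\leq\alg{a}$ and $\Box\alg{a}\in\Alg{A}^{\circ}$, a $\Box$-closed h-filter $\alg{F}$ equals $\pfltr{\alg{F}\cap\Alg{A}^{\circ}}$, and $\alg{F}\cap\Alg{A}^{\circ}$ is a filter of $\Alg{A}^{\circ}$; conversely each filter $\alg{G}$ of $\Alg{A}^{\circ}$ generates in $\Alg{A}$ a $\Box$-closed h-filter $\pfltr{\alg{G}}$ with $\pfltr{\alg{G}}\cap\Alg{A}^{\circ}=\alg{G}$. This yields a lattice isomorphism between $\Con\Alg{A}$ and the filter lattice of the Boolean algebra $\Alg{A}^{\circ}$.

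Granting this, part (b) is immediate: $\Alg{A}$ is simple iff $\Con\Alg{A}$ has exactly two elements iff $\Alg{A}^{\circ}$ has exactly two filters iff $\Alg{A}^{\circ}$ is two-element, i.e. iff $\zero,\one$ are the only open elements of $\Alg{A}$. For part (a), one implication is trivial (every simple algebra is subdirectly irreducible); for the converse, if $\Alg{A}$ is subdirectly irreducible then $\Con\Alg{A}$, and hence the filter lattice of $\Alg{A}^{\circ}$, has a least nontrivial element, so $\Alg{A}^{\circ}$ is a subdirectly irreducible Boolean algebra and therefore two-element, whence $\Alg{A}$ is simple by (b).

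For part (c), let $\Alg{A}$ be finite, so that $\Alg{A}^{\circ}$ is a finite Boolean algebra with atoms $\alg{e}_{1},\dots,\alg{e}_{n}$, where $\alg{e}_{1}\lor\dots\lor\alg{e}_{n}=\one$ and distinct atoms meet to $\zero$; for each $i$ the Boolean complement of $\alg{e}_{i}$ in $\Alg{A}^{\circ}$ equals its Heyting pseudocomplement $\neg\alg{e}_{i}$ (a routine fact about Heyting algebras), so $\neg\alg{e}_{i}$ is open. Let $\theta_{1},\theta_{1}'$ be the congruences with $\one$-classes $\pfltr{\alg{e}_{1}}$ and $\pfltr{\neg\alg{e}_{1}}$. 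Then $\theta_{1}\cap\theta_{1}'=\varepsilon$ since $\pfltr{\alg{e}_{1}}\cap\pfltr{\neg\alg{e}_{1}}=\pfltr{\alg{e}_{1}\lor\neg\alg{e}_{1}}=\{\one\}$, and $\theta_{1}\lor\theta_{1}'=\tau$ since the h-filter generated by $\{\alg{e}_{1},\neg\alg{e}_{1}\}$ contains $\alg{e}_{1}\land\neg\alg{e}_{1}=\zero$. They permute: here $\alg{x}\equiv\alg{y}\pmod{\theta_{1}}$ iff $\alg{e}_{1}\land\alg{x}=\alg{e}_{1}\land\alg{y}$, and given $\alg{x}\mathrel{\theta_{1}}\alg{z}\mathrel{\theta_{1}'}\alg{y}$ the element $\alg{w}=(\alg{e}_{1}\land\alg{y})\lor(\neg\alg{e}_{1}\land\alg{x})$ satisfies $\alg{x}\mathrel{\theta_{1}'}\alg{w}\mathrel{\theta_{1}}\alg{y}$, and symmetrically. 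So $\theta_{1},\theta_{1}'$ is a pair of factor congruences, and by Proposition \ref{pr-factor} $\Alg{A}\cong\Alg{A}/\theta_{1}\times\Alg{A}/\theta_{1}'$. Now $(\Alg{A}/\theta_{1})^{\circ}$ is the quotient of $\Alg{A}^{\circ}$ by the filter generated by $\alg{e}_{1}$, which is two-element, so $\Alg{A}/\theta_{1}$ is simple by (b); while $(\Alg{A}/\theta_{1}')^{\circ}$ is the quotient of $\Alg{A}^{\circ}$ by the filter generated by $\neg\alg{e}_{1}$, a Boolean algebra with $n-1$ atoms. Induction on $n$ then exhibits $\Alg{A}$ as a direct product of $n$ simple $\VWSFi$-algebras.

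I expect the main obstacle to be the congruence correspondence of the first paragraph — in particular verifying that a $\Box$-closed h-filter really induces a $\VWSFi$-congruence and that the correspondence is a lattice isomorphism — together with the permutability of the complementary congruences $\theta_{1},\theta_{1}'$ used in (c), since $\VWSFi$ need not be congruence-permutable globally; both become routine once the defining identities \eqref{eq-ws5} and the fact that the open elements of a $\VWSFi$-algebra form a Boolean subalgebra are used. Alternatively, one may simply cite \cite{G_Bez_MIPC_1}.
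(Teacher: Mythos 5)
Your proof is correct. Note that the paper does not prove this proposition at all: it is quoted as a known result with a pointer to the literature on monadic Heyting algebras (Bezhanishvili's ``Varieties of monadic Heyting algebras I''), so there is no in-paper argument to compare against; your route --- the lattice isomorphism between $\Con\Alg{A}$ and the filter lattice of the Boolean algebra $\Alg{A}^{\circ}$ of open elements, from which (a) and (b) are immediate and (c) follows by splitting off atoms via pairs of permuting factor congruences --- is exactly the standard argument behind the cited result. The details check out: the characterization $\alg{x}\equiv\alg{y}\pmod{\theta_1}$ iff $\alg{e}_1\land\alg{x}=\alg{e}_1\land\alg{y}$ for the congruence with $\one$-class $\pfltr{\alg{e}_1}$ makes the permutability computation with $\alg{w}=(\alg{e}_1\land\alg{y})\lor(\neg\alg{e}_1\land\alg{x})$ go through, and $\neg\alg{e}_1$ being the Boolean complement of the atom $\alg{e}_1$ in $\Alg{A}^{\circ}$ is, as you say, routine. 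The only step you use silently in (c) is that $(\Alg{A}/\theta)^{\circ}$ is the image of $\Alg{A}^{\circ}$ under the quotient map (so that it is indeed $\Alg{A}^{\circ}$ modulo the corresponding principal filter); this follows in one line from $\Box\Box\alg{a}=\Box\alg{a}$, since any open element of the quotient is the image of $\Box\alg{a}$ for any representative $\alg{a}$, and is worth stating explicitly.
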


Also, the following holds.

\begin{prop} \label{pr-WS5fin} Let $\Alg{A}$ be a finitely generated $\VWSFi$-algebra. If h-reduct of $\Alg{A}$ is Boolean, then $\Alg{A}$ is finite. 
\end{prop}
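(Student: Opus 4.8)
The plan is to reduce the statement to the well-understood behaviour of an S5-like operator on a \emph{Boolean} algebra. Since the h-reduct of $\Alg{A}$ is Boolean, $\impl$ is term-definable ($\alg{a}\impl\alg{b}=\neg\alg{a}\lor\alg{b}$) and $\neg\neg\alg{a}=\alg{a}$, so I may assume $\Alg{A}$ is generated, as an algebra, by a finite set $G$ using only $\land,\lor,\neg,\zero,\one,\Box$. First I would let $\Alg{B}$ be the Boolean subalgebra of the h-reduct generated by $G$: being a finitely generated Boolean algebra it is finite, hence the set $\{\Box\alg{b}:\alg{b}\in\Alg{B}\}$ is finite and, by idempotency of $\Box$, consists of open elements. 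Let $\Alg{O}$ be the Boolean subalgebra generated by these boxes; since the open elements form a Boolean subalgebra of $\Alg{A}$, every element of $\Alg{O}$ is open. Finally let $\Alg{C}$ be the Boolean subalgebra of the h-reduct generated by $\Alg{B}\cup\Alg{O}$; it is again finite. The goal is then to show $\Alg{C}$ is closed under $\Box$, for then $\Alg{C}$ is a $\VWSFi$-subalgebra of $\Alg{A}$ containing $G$, so $\Alg{C}=\Alg{A}$ and $\Alg{A}$ is finite.

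The heart of the argument is the $\Box$-closure of $\Alg{C}$, which I would prove via a normal form. Let $Q$ be the finite set of atoms of $\Alg{O}$; its members are pairwise disjoint open elements with $\bigvee Q=\one$. Any $\alg{c}\in\Alg{C}$ can be written as $\alg{c}=\bigvee_{q\in Q}(q\land\alg{b}_q)$ with $\alg{b}_q\in\Alg{B}$: express $\alg{c}$ as a join of meets of generators and their complements, split each meet into its $\Alg{B}$-part and its $\Alg{O}$-part, and regroup the summands according to which atom $q$ lies below the $\Alg{O}$-part. Fixing $q\in Q$, disjointness of the atoms and Boolean algebra give $\alg{c}\lor\neg q=\neg q\lor\alg{b}_q$; applying $\Box$ and using that $\neg q$ is open (so $\neg q=\Box\neg q$) together with \eqref{eq-ws5}(e), one obtains $\Box\alg{c}\lor\neg q=\neg q\lor\Box\alg{b}_q$, whence $q\land\Box\alg{c}=q\land\Box\alg{b}_q$. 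Joining over $q\in Q$ and using $\bigvee Q=\one$ gives $\Box\alg{c}=\bigvee_{q\in Q}(q\land\Box\alg{b}_q)$, which lies in $\Alg{C}$ because each $q$ and each $\Box\alg{b}_q$ belongs to $\Alg{O}\subseteq\Alg{C}$.

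The main obstacle is exactly this last computation: verifying that $\Box$ of an element of $\Alg{C}$ stays in $\Alg{C}$, i.e. getting the clean formula $\Box\alg{c}=\bigvee_{q}(q\land\Box\alg{b}_q)$ out of the mix axiom \eqref{eq-ws5}(e) and the fact that the relevant open "pieces" $q$ are complemented by open elements; the finiteness bookkeeping for $\Alg{B}$, $\Alg{O}$, $\Alg{C}$ is routine. (As a sanity check, one could also note that the subvariety of $\VWSFi$-algebras with Boolean h-reduct is term-equivalent to the variety of monadic Boolean algebras, which is locally finite, so finitely generated members are finite.)
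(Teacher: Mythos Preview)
Your argument is correct; the normal-form computation using the atoms of $\Alg{O}$ and axiom \eqref{eq-ws5}(e) goes through exactly as you describe, and the closure of $\Alg{C}$ under $\Box$ follows. However, your route is substantially more elaborate than the paper's. The paper's proof is literally your parenthetical sanity check: once the h-reduct is Boolean, $\Alg{A}$ is an $\VSFi$-algebra (monadic Boolean algebra), and the local finiteness of $\VSFi$ is cited from the literature (Chagrov--Zakharyaschev). What your longer argument buys is self-containment---you have in effect re-proved, in this special setting, the local finiteness of monadic Boolean algebras via the standard ``one layer of boxes suffices'' normal form---whereas the paper is content to invoke the known result. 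Both are valid; the paper's is shorter, yours is more informative about \emph{why} the bound holds.
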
	
\begin{proof}
	If h-reduct of the algebra is Boolean, then $\Alg{A}$ is an $\VSFi$-algebra, and every finitely-generated $\VSFi$-algebra is finite (see e.g. \cite[Corollary 5.19]{Chagrov_Zakh}).
\end{proof}

\subsubsection{$\VWSFi$-algebras with Compatible Operations}

If we extend the signature of $\VWSFi$-algebras by new operations $f_i, i<n$, these operations are said to be \Def{compatible} if every $\VWSFi$-congruence is a congruence on the algebra in extended signature (see \cite{Blk_Pgz_3}). In other terms, new operations are compatible if $\VWSFi$-filters coincide with filters of extended algebras. The algebras that we consider below can be viewed as $\VWSFi$-algebras with compatible operations. Let us observe the following.

The filters generated by a set of elements of a given $\VWSFi$-algebra with compatible operations can be described in the following way. 

\begin{prop} \label{pr-genfltr}
	Let $\Alg{A}$ be a $\VWSFi$-algebra with compatible operations and $\alg{B} \subseteq \Alg{A}$ be a non-empty subset of elements. Then
	\begin{equation}
	\pfltr{\alg{B}} = \set{\alg{a} \in \Alg{A}}{\Box\alg{b}_0 \land \dots \land \Box\alg{b}_{n-1} \leq \alg{a} \text{ for some } \alg{b}_0,\dots,\alg{b}_{n-1} \in \alg{B}}. \label{eq-pfltr}
	\end{equation}
\end{prop}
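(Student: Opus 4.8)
Write $\alg{F}$ for the set on the right-hand side of \eqref{eq-pfltr}. The plan is to identify $\alg{F}$ with the least $\VWSFi$-filter of $\Alg{A}$ that contains $\alg{B}$. This will suffice: by compatibility of the added operations, the filters of $\Alg{A}$ in the extended signature coincide with its $\VWSFi$-filters, so $\pfltr{\alg{B}}$ is precisely that least $\VWSFi$-filter; and recall (cf.\ \cite{G_Bez_MIPC_1}) that a subset of a $\VWSFi$-algebra is a $\VWSFi$-filter exactly when it is an h-filter closed under $\Box$ (that a congruence class of $\one$ is such a filter is immediate since $\Box\one = \one$ and congruences are compatible with $\Box$; the converse is the routine verification that a $\Box$-closed h-filter induces a well-defined $\VWSFi$-congruence).

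First I would check that $\alg{F}$ is an h-filter closed under $\Box$ and that $\alg{B} \subseteq \alg{F}$. Containment of $\one$ and upward closure are immediate (any $\alg{b} \in \alg{B}$ gives $\Box\alg{b} \le \one$, and $\le$ is transitive), and closure under $\land$ follows by concatenating the two finite lists of witnesses. That $\alg{B} \subseteq \alg{F}$ uses only $\Box\alg{b} \le \alg{b}$. The only step that genuinely uses the monadic structure is $\Box$-closure: if $\Box\alg{b}_0 \land \dots \land \Box\alg{b}_{n-1} \le \alg{a}$ then, since $\Box$ is order preserving, preserves finite meets, and satisfies $\Box\Box x = \Box x$ (all consequences of \eqref{eq-ws5}),
\[
\Box\alg{b}_0 \land \dots \land \Box\alg{b}_{n-1} \;=\; \Box\bigl(\Box\alg{b}_0 \land \dots \land \Box\alg{b}_{n-1}\bigr) \;\le\; \Box\alg{a},
\]
so $\Box\alg{a} \in \alg{F}$ with the same witnesses $\alg{b}_0,\dots,\alg{b}_{n-1}$. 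Hence $\alg{F}$ is a $\VWSFi$-filter containing $\alg{B}$, which already gives $\pfltr{\alg{B}} \subseteq \alg{F}$.

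For the reverse inclusion, let $\alg{G}$ be an arbitrary $\VWSFi$-filter with $\alg{B} \subseteq \alg{G}$; I would show $\alg{F} \subseteq \alg{G}$. Given $\alg{a} \in \alg{F}$, fix witnesses $\alg{b}_0,\dots,\alg{b}_{n-1} \in \alg{B} \subseteq \alg{G}$ with $\Box\alg{b}_0 \land \dots \land \Box\alg{b}_{n-1} \le \alg{a}$; since $\alg{G}$ is closed under $\Box$ and under $\land$, we get $\Box\alg{b}_0 \land \dots \land \Box\alg{b}_{n-1} \in \alg{G}$, and then $\alg{a} \in \alg{G}$ because $\alg{G}$ is upward closed. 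Taking $\alg{G} = \pfltr{\alg{B}}$ gives $\alg{F} \subseteq \pfltr{\alg{B}}$, hence equality. There is no serious obstacle here beyond this bookkeeping; the two points worth flagging are that compatibility is exactly what allows $\pfltr{\alg{B}}$ to be computed inside the $\VWSFi$-reduct (using only $\land$, $\le$, $\Box$, and ignoring the extra operations $f_i$), and that it is the idempotence $\Box\Box x = \Box x$ that forces the description in \eqref{eq-pfltr} to involve $\Box\alg{b}_i$ rather than $\alg{b}_i$.
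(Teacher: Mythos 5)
Your proof is correct and is exactly the routine argument the paper intends (it states the proposition with ``The proof is left to the reader''): verify that the right-hand side is an h-filter closed under $\Box$ containing $\alg{B}$ --- hence, by compatibility, a filter of the extended algebra --- and that it is contained in every such filter containing $\alg{B}$. All the points needing care ($\Box\alg{b}\leq\alg{b}$, meet-preservation and idempotence of $\Box$ for $\Box$-closure, and non-emptiness of $\alg{B}$ to get $\one$ in the set) are handled.
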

The proof is left to the reader. 

Let us note that if algebra  $\Alg{A}$ is finitely generated and $\Alg{A}/\alg{B}$ is finite, the condition \eqref{eq-pfltr} can be simplified. 

\begin{cor} \label{cor-dense-g}
	Let $\Alg{A}$ be a finitely generated $\VWSFi$-algebra with compatible operations, $\alg{B} \subseteq \Alg{A}$ be a set of elements of $\Alg{A}$ closed under $\land$ and such that $\Alg{A}/\pfltr{B}$ is finite. Then there is an element $\alg{b} \in \alg{B}$, such that
	\begin{equation}
	\pfltr{\alg{B}} = \set{\alg{a} \in \Alg{A}}{\Box\alg{b} \leq \alg{a}}. \label{eq-cor-dense}
	\end{equation}	 
\end{cor}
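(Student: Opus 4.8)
The plan is to show that $\pfltr{\alg B}$ is a \emph{finitely generated} (hence principal) h-filter, and then to read off from Proposition~\ref{pr-genfltr} that it has a generator of the form $\Box\alg b$ with $\alg b\in\alg B$. First I would rewrite $\pfltr{\alg B}$: since $\alg B$ is closed under $\land$ and, by property~(c) in \eqref{eq-ws5}, $\Box(\alg b_0\land\dots\land\alg b_{n-1})=\Box\alg b_0\land\dots\land\Box\alg b_{n-1}$, every meet occurring on the right-hand side of \eqref{eq-pfltr} is of the form $\Box\alg b$ for a single $\alg b\in\alg B$; hence
\[
\pfltr{\alg B}=\bigcup_{\alg b\in\alg B}\set{\alg a\in\Alg A}{\Box\alg b\le\alg a}=\bigcup_{\alg b\in\alg B}\pfltr{\alg b},
\]
and this union is upward directed, because for $\alg b,\alg b'\in\alg B$ we have $\alg b\land\alg b'\in\alg B$ and $\pfltr{\alg b},\pfltr{\alg b'}\subseteq\pfltr{\alg b\land\alg b'}$ (as $\Box(\alg b\land\alg b')\le\Box\alg b$).

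Next I would prove that $\pfltr{\alg B}$ is finitely generated, and this is where both hypotheses enter. Let $\theta$ be the congruence with $\alg F_\theta=\pfltr{\alg B}$, let $g_1,\dots,g_N$ generate $\Alg A$, and (using that $\Alg A/\theta$ is finite) fix $u_1,\dots,u_M\in\Alg A$ whose $\theta$-classes exhaust $\Alg A/\theta$. Form the congruence $\psi$ generated by the finitely many pairs $(g_i,u_{\sigma(i)})$, where $g_i\equiv u_{\sigma(i)}\pmod\theta$, together with one pair $\bigl(f(u_{a_1},\dots,u_{a_k}),\,u_{\mu(f,\vec a)}\bigr)$, chosen inside $\theta$, for each basic operation $f$ of the (finite) type and each argument tuple $\vec a$. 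Then $\psi\subseteq\theta$, and the set $\set{\pclass{u_j}{\psi}}{j\le M}$ is closed under all operations and contains the images of $g_1,\dots,g_N$, hence equals $\Alg A/\psi$; so $\card{\Alg A/\psi}\le M=\card{\Alg A/\theta}$, and since the canonical homomorphism of $\Alg A/\psi$ onto $\Alg A/\theta$ is surjective it is an isomorphism, i.e. $\psi=\theta$. Thus $\theta$ is compact, equivalently $\pfltr{\alg B}=\alg F_\theta$ is a finitely generated h-filter; and since a finitely generated h-filter $\pfltr{\alg c_1,\dots,\alg c_r}$ equals the principal filter $\pfltr{\alg c_1\land\dots\land\alg c_r}$, we obtain $\pfltr{\alg B}=\pfltr{\alg c}$ for some $\alg c\in\Alg A$.

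Finally, since $\alg c\in\pfltr{\alg c}=\pfltr{\alg B}=\bigcup_{\alg b\in\alg B}\pfltr{\alg b}$, we have $\alg c\in\pfltr{\alg b}$ for some $\alg b\in\alg B$, whence $\pfltr{\alg c}\subseteq\pfltr{\alg b}$; but also $\pfltr{\alg b}\subseteq\pfltr{\alg B}=\pfltr{\alg c}$, so $\pfltr{\alg B}=\pfltr{\alg b}=\set{\alg a\in\Alg A}{\Box\alg b\le\alg a}$ by \eqref{eq-pfltr}, which is \eqref{eq-cor-dense}. The one genuinely delicate step is the middle one: finiteness of $\Alg A/\pfltr{\alg B}$ by itself does not make $\pfltr{\alg B}$ finitely generated — the hypothesis that $\Alg A$ be finitely generated is essential (already for plain Heyting algebras the filter of dense elements of an infinitely generated free algebra gives a two-element quotient yet is not finitely generated), so the proof must exploit it, as the construction of $\psi$ above does.
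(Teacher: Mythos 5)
Your proof is correct and follows essentially the same route as the paper's: finite generation of $\Alg{A}$ together with finiteness of $\Alg{A}/\pfltr{\alg{B}}$ gives compactness of the corresponding congruence, hence $\pfltr{\alg{B}}$ is a finitely generated, and therefore principal, filter, and Proposition \ref{pr-genfltr} plus the $\land$-closure of $\alg{B}$ then yields a generator of the form $\Box\alg{b}$ with $\alg{b} \in \alg{B}$. The only differences are cosmetic: you prove the compactness fact directly (the paper simply cites \cite[Lemma 2.1]{Andreka_Jonsson_Nemeti_1991}), and you package the final step as a directed-union argument over $\pfltr{\alg{b}}$, $\alg{b} \in \alg{B}$, rather than applying \eqref{eq-pfltr} to the single generator as the paper does.
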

\begin{proof} Suppose $\Alg{A}$ ia a finitely generated $\VWSFi$-algebra with compatible operations,  $\alg{B} \subseteq \Alg{A}$ and quotient algebra $\Alg{A}/\pfltr{B}$ is finite. Recall (see e.g. \cite[Lemma 2.1]{Andreka_Jonsson_Nemeti_1991}) that finiteness of a quotient of a finitely generated algebra implies  compactness of the congruence. Thus, the congruence defined by $\pfltr{\alg{B}}$ is compact, that is filter $\pfltr{\alg{B}}$ is generated by a finite set of elements. Every filter is closed under $\land$. Hence, if a filter is generated by a finite set of elements, it is generated by a single element: the meet of generators. 
	
Suppose that $\alg{a} \in \pfltr{\alg{B}}$ generates $\pfltr{\alg{B}}$, that is 
\begin{equation}
\pfltr{\alg{B}} = \set{\alg{c} \in \Alg{A}}{\alg{a} \leq \alg{c}}. \label{eq-cor-dense-1}
\end{equation}
By Proposition \ref{pr-genfltr}, there are elements $\alg{b}_i \in \alg{B}, i < n$ such that 
\begin{equation}
\Box\alg{b}_0 \land \dots \land \Box\alg{b}_{n-1} \leq \alg{a}. \label{eq-cor-dense-2}
\end{equation}
Recall that $\Box\alg{b}_0 \land \dots \land \Box\alg{b}_{n-1} = \Box(\alg{b}_0 \land \dots \land \alg{b}_{n-1})$, and by the assumption, $\alg{B}$ is closed under $\land$. Thus, if $\alg{b} = \alg{b}_0 \land \dots \land \alg{b}_{n-1} \in \alg{B}$, and we have
	\begin{equation}
\pfltr{\alg{B}} = \set{\alg{a} \in \Alg{A}}{\Box\alg{b} \leq \alg{a}}. \label{eq-cor-dense-10}
\end{equation}	 
\end{proof}

If $\Alg{A}$ is a $\VWSFi$-algebra with compatible operations and $\alg{B} \subseteq \Alg{A}$ is a set of its dense elements, the quotient algebra $\BRed{\Alg{A}} \bydef \Alg{A}/\pfltr{\alg{B}}$ is a \Def{Boolean projection} of $\Alg{A}$. It is clear that the h-reduct of $\BRed{\Alg{A}}$ is a Boolean algebra. Moreover, if $\Alg{A}'$ is a homomorphic image of $\Alg{A}$ having Boolean h-reduct, then $\Alg{A}'$ is a homomorphic image of $\BRed{\Alg{A}}$.   

Immediately from Corollary \ref{cor-dense-g} and the fact that set of all dense elements is closed under $\land$, we have the following.

\begin{cor} \label{cor-dense}
	Let $\Alg{A}$ be a finitely generated $\VWSFi$-algebra with compatible operations, $\alg{B} \subseteq \Alg{A}$ be a set of all dense elements of $\Alg{A}$ and $\BRed{\Alg{A}}$ be finite. Then there is an element $\alg{b} \in \alg{B}$ such that
	\begin{equation}
	\pfltr{\alg{B}} = \set{\alg{a} \in \Alg{A}}{\Box\alg{b} \leq \alg{a}}. \label{eq-cor-dense0}
	\end{equation}	 
\end{cor}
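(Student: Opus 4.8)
The plan is to derive Corollary~\ref{cor-dense} by simply instantiating Corollary~\ref{cor-dense-g} with the right choice of $\alg{B}$. The only hypotheses of Corollary~\ref{cor-dense-g} are that $\Alg{A}$ is a finitely generated $\VWSFi$-algebra with compatible operations, that $\alg{B} \subseteq \Alg{A}$ is closed under $\land$, and that $\Alg{A}/\pfltr{\alg{B}}$ is finite; its conclusion is precisely the displayed equation \eqref{eq-cor-dense}, which coincides with \eqref{eq-cor-dense0}. So the work consists entirely in verifying these three hypotheses for the set $\alg{B}$ of all dense elements of $\Alg{A}$.

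First I would take $\alg{B}$ to be the set of all dense elements of $\Alg{A}$, i.e.\ all $\alg{a}$ with $\neg\alg{a} = \zero$. Finite generation of $\Alg{A}$ and the compatibility of the extra operations are given by hypothesis, so those carry over verbatim. Next I would observe that $\alg{B}$ is closed under $\land$: if $\neg\alg{a} = \zero$ and $\neg\alg{b} = \zero$, then since $\neg(\alg{a}\land\alg{b}) = \neg\alg{a} \lor \neg\alg{b}$ need not hold in a Heyting algebra in general, one instead argues directly that $\alg{a}\land\alg{b}$ is dense whenever $\alg{a},\alg{b}$ are --- this is the standard fact that the dense elements form a filter, and it is recalled in the surrounding text ("the set of all dense elements is closed under $\land$"). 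Finally, $\pfltr{\alg{B}}$ is by definition the h-filter generated by the dense elements, so $\Alg{A}/\pfltr{\alg{B}}$ is exactly the Boolean projection $\BRed{\Alg{A}}$, which is assumed to be finite. Hence all three hypotheses of Corollary~\ref{cor-dense-g} hold.

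Applying Corollary~\ref{cor-dense-g} then yields an element $\alg{b} \in \alg{B}$ with $\pfltr{\alg{B}} = \set{\alg{a}\in\Alg{A}}{\Box\alg{b} \leq \alg{a}}$, which is \eqref{eq-cor-dense0}, completing the proof. There is essentially no obstacle here beyond bookkeeping: the substance was already done in Corollary~\ref{cor-dense-g} (which in turn rests on the fact from \cite[Lemma 2.1]{Andreka_Jonsson_Nemeti_1991} that a finite quotient of a finitely generated algebra has compact kernel, plus the reduction of a finitely generated filter to a principal one via closure under $\land$). The one point to state carefully, rather than compute, is that $\BRed{\Alg{A}} = \Alg{A}/\pfltr{\alg{B}}$ for this particular $\alg{B}$, so that the finiteness assumption on $\BRed{\Alg{A}}$ matches the finiteness of $\Alg{A}/\pfltr{\alg{B}}$ demanded by Corollary~\ref{cor-dense-g}.
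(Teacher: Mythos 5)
Your proposal is correct and matches the paper's own argument: the paper derives this corollary immediately from Corollary~\ref{cor-dense-g} together with the fact that the dense elements are closed under $\land$, exactly as you do (and your direct filter-style verification of that closure, plus the observation that $\Alg{A}/\pfltr{\alg{B}}=\BRed{\Alg{A}}$ by definition, is just the bookkeeping the paper leaves implicit).
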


The following property of finitely generated $\VWSFi$-algebras with compatible operations is crucial for our study of projective $\VWSFi$ algebras with compatible operations.

\begin{theorem} \label{th-two} Let $\Alg{A}$ be a nontrivial finitely generated $\VWSFi$-algebra with compatible operations having finite Boolean reduct. Then $\two \in \CHom\Alg{A}$ if and only if for every $\alg{a} \in \Alg{A}$
	\begin{equation}
	\Box\alg{a} \neq \Box\neg\alg{a}. \label{eq-th-two-0}
	\end{equation}
\end{theorem}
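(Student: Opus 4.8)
The plan is to prove each direction separately, using the Boolean projection $\BRed{\Alg{A}}$ and Corollary \ref{cor-dense} to control the dense elements. For the forward direction, suppose $h\colon \Alg{A} \to \two$ is a surjective homomorphism and pick any $\alg{a} \in \Alg{A}$. Since $\two$ has a single nontrivial $\Box$ (namely $\Box$ fixes both $\zero$ and $\one$) and $h$ commutes with $\Box$ and $\neg$, we have $h(\Box\alg{a}) \in \{\zero,\one\}$ and likewise for $\Box\neg\alg{a}$. The key point is that in $\two$ one has $\Box b = \Box \neg b$ forced to fail: if $h(\alg{a}) = \one$ then $h(\Box\alg{a}) = \Box h(\alg{a})$ need not be $\one$, so I must be more careful and instead observe that $\Box\alg{a} \land \Box\neg\alg{a} = \Box(\alg{a}\land\neg\alg{a}) = \Box\zero = \zero$, so if $\Box\alg{a} = \Box\neg\alg{a}$ then $\Box\alg{a} = \zero$; hence the equation \eqref{eq-th-two-0} fails exactly when both $\Box\alg{a} = \zero$ and $\Box\neg\alg{a} = \zero$, and applying $h$ this would give $\Box h(\alg{a}) = \zero$ and $\Box\neg h(\alg{a}) = \zero$ in $\two$ — but since the h-reduct of $\two$ is the two-element Boolean algebra, one of $h(\alg{a}),\neg h(\alg{a})$ equals $\one$, and $\Box\one = \one \neq \zero$ in $\two$, a contradiction. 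So the forward direction reduces to this computation in $\two$.

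For the converse, assume \eqref{eq-th-two-0} holds for all $\alg{a}$, i.e.\ there is no $\alg{a}$ with $\Box\alg{a} = \Box\neg\alg{a}$. I want to produce a homomorphism onto $\two$. First I pass to the Boolean projection $\BRed{\Alg{A}} = \Alg{A}/\pfltr{\alg{B}}$, where $\alg{B}$ is the set of all dense elements; its h-reduct is Boolean, and since $\Alg{A}$ has finite Boolean reduct, Proposition \ref{pr-WS5fin} (after checking $\BRed{\Alg{A}}$ is a finitely generated $\VWSFi$-algebra) gives that $\BRed{\Alg{A}}$ is finite, so by Proposition \ref{pr-WS5}(c) it is a finite direct product of simple $\VWSFi$-algebras. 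It then suffices to show $\BRed{\Alg{A}}$ is nontrivial: any nontrivial finite product of simples has a simple factor, every nontrivial simple $\VWSFi$-algebra has $\two$ as a subalgebra, and a simple algebra maps onto its minimal subalgebra $\two$ — more directly, a nontrivial finite $\VWSFi$-algebra with Boolean h-reduct has $\two$ as a homomorphic image (project to a simple factor; a simple $\VSFi$-algebra has only $\zero,\one$ as open elements, and quotienting by a maximal filter yields $\two$). Composing $\Alg{A} \twoheadrightarrow \BRed{\Alg{A}} \twoheadrightarrow \two$ gives the desired homomorphism.

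So the real content of the converse is: if \eqref{eq-th-two-0} holds then $\BRed{\Alg{A}}$ is nontrivial, equivalently $\pfltr{\alg{B}} \neq \Alg{A}$, equivalently $\zero \notin \pfltr{\alg{B}}$. By Corollary \ref{cor-dense} there is a single dense element $\alg{b}$ with $\pfltr{\alg{B}} = \set{\alg{a}}{\Box\alg{b} \leq \alg{a}}$, so $\zero \in \pfltr{\alg{B}}$ iff $\Box\alg{b} = \zero$. Thus I must show: if some dense $\alg{b}$ has $\Box\alg{b} = \zero$, then \eqref{eq-th-two-0} fails for some $\alg{a}$. Since $\alg{b}$ is dense, $\neg\alg{b} = \zero$, so $\Box\neg\alg{b} = \Box\zero = \zero = \Box\alg{b}$ — taking $\alg{a} = \alg{b}$ directly violates \eqref{eq-th-two-0}. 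Conversely if \eqref{eq-th-two-0} holds then no dense element $\alg{b}$ satisfies $\Box\alg{b} = \zero$ (as $\Box\neg\alg{b} = \zero$ always for dense $\alg{b}$), hence $\Box\alg{b} \neq \zero$ and $\zero \notin \pfltr{\alg{B}}$, so $\BRed{\Alg{A}}$ is nontrivial. The main obstacle I anticipate is the bookkeeping around $\BRed{\Alg{A}}$: verifying it is genuinely a finitely generated $\VWSFi$-algebra (so that Proposition \ref{pr-WS5fin} applies) and that its simple factors really do surject onto $\two$; the dense-element computations themselves are short once Corollary \ref{cor-dense} is in hand.
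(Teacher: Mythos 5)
Your forward direction is fine and matches the paper's. The converse, however, has a genuine gap: the reduction ``it suffices to show $\BRed{\Alg{A}}$ is nontrivial'' rests on the claim that every nontrivial finite $\VWSFi$-algebra with Boolean h-reduct, in particular every nontrivial simple one, has $\two$ as a homomorphic image. This is false. A simple algebra has exactly two congruences, so its only homomorphic images are itself and the trivial algebra; ``quotienting by a maximal filter'' does not help, because a maximal Boolean filter is in general not an open filter (not compatible with $\Box$), so it does not induce a $\VWSFi$-congruence. Concretely, the four-element simple algebra with Boolean h-reduct $\{\zero,\alg{a},\neg\alg{a},\one\}$ and $\Box\alg{x}=\zero$ for $\alg{x}\neq\one$, $\Box\one=\one$, is nontrivial, equals its own Boolean projection, yet does not map onto $\two$. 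Since your argument uses hypothesis \eqref{eq-th-two-0} only once (to get $\Box\alg{b}\neq\zero$ for the generator of the dense filter, i.e.\ nontriviality of $\BRed{\Alg{A}}$), it would ``prove'' $\two\in\CHom\Alg{A}$ for algebras like this one, which is wrong; so the missing use of \eqref{eq-th-two-0} is essential, not bookkeeping.

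What the paper does at this point, and what your proof needs, is a second application of \eqref{eq-th-two-0} to rule out the case where no simple factor of $\BRed{\Alg{A}}$ is $\two$. Using Corollary \ref{cor-dense}, the dense filter is $\pfltr{\Box\alg{d}}$ for a single dense $\alg{d}$ with $\Box\alg{d}>\zero$; since open elements form a Boolean algebra, $\pfltr{\Box\alg{d}}$ and $\pfltr{\neg\Box\alg{d}}$ are complementary, giving a factorization $\Alg{A}\cong\Alg{B}\times\Alg{C}$ with $\Alg{B}=\BRed{\Alg{A}}$ and $\Alg{C}=\Alg{A}/\pfltr{\neg\Box\alg{d}}$. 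The image $\alg{c}$ of $\alg{d}$ in $\Alg{C}$ is dense and satisfies $\Box\alg{c}=\Box\neg\alg{c}=\zero_{\Alg{C}}$. Now write the finite algebra $\Alg{B}$ as $\Alg{B}_0\times\dots\times\Alg{B}_{n-1}$ with simple factors; if no $\Alg{B}_i$ is $\two$, each contains $\alg{b}_i$ with $\zero<\alg{b}_i<\one$, hence (by Proposition \ref{pr-WS5}(b), simplicity, and Booleanness of the h-reduct) $\Box\alg{b}_i=\Box\neg\alg{b}_i=\zero$, and the element $(\alg{b}_0,\dots,\alg{b}_{n-1},\alg{c})$ of $\Alg{A}$ violates \eqref{eq-th-two-0} — note the witness must be assembled in $\Alg{A}$ itself, which is exactly why the complementary factor $\Alg{C}$ and the element $\alg{c}$ are needed. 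So some $\Alg{B}_i\cong\two$, and composing the projections gives $\two\in\CHom\Alg{A}$. Without this step your proposal does not establish the converse.
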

\begin{proof} Suppose $\two \in \CHom\Alg{A}$. For contradiction: assume that for some $\alg{a} \in \Alg{A}$ we have $\Box\alg{a} = \Box\neg\alg{a}$. Then, if $\varphi$ is a homomorphism of $\Alg{A}$ onto $\two$, we would have $\Box\varphi(\alg{a}) = \Box\neg\varphi(\alg{a})$ and $\varphi(\alg{a}) \in \{\zero,\one\}$, which is impossible.
	
	\subsubsection*{}
	
	Conversely, suppose that $\Alg{A}$ is a finitely generated algebra that does not contain elements satisfying \eqref{eq-th-two-0}. If $\Alg{A}$ does not contain dense elements distinct from $\one$, then h-reduct of $\Alg{A}$ is a Boolean algebra, that is, $\Alg{A} = \BRed{\Alg{A}}$ and it is finite by the assumption of the theorem. Hence, by Proposition \ref{pr-WS5}(c), $\Alg{A} \cong \Alg{A}_0 \times \dots \times \Alg{A}_{n-1}$, where $\Alg{A}_i, i <n$ are simple algebras. Thus, either one of algebras $\Alg{A}_i$ is (isomorphic to) $\two$ (and obviously $\two \in \CHom\Alg{A}$), or there is an element $\alg{a} = (\alg{a}_0,\dots,\alg{a}_{n-1}) \in \Alg{A}$ such that $\alg{a}_i \in \Alg{A}_i$ and $\zero_{\Alg{A}_i} < \alg{a}_i < \one_{\Alg{A}_i}$. Note, that because h-reducts of algebras $\Alg{A}_i$ are Boolean, we have $\zero_{\Alg{A}_i} < \neg\alg{a}_i < \one_{\Alg{A}_i}$. Therefore, by Proposition \ref{pr-WS5}(b) (and taking into account that all additional operations are compatible), $\Box\alg{a}_i = \Box\neg\alg{a}_i = \zero_{\Alg{A}_i}$, which contradicts \eqref{eq-th-two-0}. 
	
	\subsubsection*{}	
	Now, let us assume that $\Alg{A}$ has distinct from $\one$ dense elements. Let $\alg{D}$ be a set of all dense elements of $\Alg{A}$. Then, by Corollary \ref{cor-dense}, there is a dense element $\alg{d} \in \alg{D}$ such that 
	\begin{equation}
	\pfltr{\alg{D}} = \set{\alg{a} \in \Alg{A}}{\Box\alg{d} \leq \alg{a}} = \pfltr{\Box\alg{d}}. \label{eq-th-two-1}
	\end{equation}
	
	Observe that $\Box\alg{d} > 0$, for otherwise we would have $\Box\alg{d} = \zero$, and because element $\alg{d}$ is dense, $\Box\neg\alg{d} = \Box\zero = \zero$. That is, we would have  $\Box\alg{d} = \Box\neg\alg{d}$. 
	
	Recall that open elements of $\Alg{A}$ form a Boolean algebra. So, $\Box\alg{d} > 0$ yields $\neg\Box\alg{d} > \zero$ and element $\neg\Box\alg{d}$ is open, i.e. $\Box\neg\Box\alg{d} = \alg{d}$. Let us consider filter
	\begin{equation}	
	\pfltr{\neg\Box\alg{d}} = \set{\alg{a} \in \Alg{A}}{\neg\Box\alg{d} \leq \alg{a}}. \label{eq-th-two-2}
	\end{equation}
	It is clear that filters $\pfltr{\Box\alg{d}}$ and $\pfltr{\neg\Box\alg{d}}$ complement each other and
	\begin{equation}
	\Alg{A} \cong \Alg{B} \times \Alg{C} \label{eq-th-two-3}
	\end{equation}
	where 
	\begin{equation}
	\Alg{B} \bydef \Alg{A}/\pfltr{\Box\alg{d}} \text{ and } \Alg{C} \bydef \Alg{A}/\pfltr{\neg\Box\alg{d}}. \label{eq-th-two-4}
	\end{equation}
	
	Let us observe that element $\alg{c} \bydef \alg{d}/\pfltr{\neg\Box\alg{d}} \in \Alg{C}$ is a dense element, for 
	\begin{equation}
	\neg\alg{c} = \neg\alg{d}/\pfltr{\neg\Box\alg{d}} = \zero_\Alg{A}/\pfltr{\neg\Box\alg{d}} = \zero_\Alg{C} \label{eq-th-two-10}
	\end{equation} 
	and subsequently
	\begin{equation}
	\Box\neg\alg{c} = \zero_\Alg{C}. \label{eq-th-two-11}
	\end{equation}
	On the other hand,
	\begin{equation}
	\Box\alg{c} = \Box\alg{d}/\pfltr{\neg\Box\alg{d}} = \zero_\Alg{C}, \label{eq-th-two-12}
	\end{equation}
	because $\neg\Box\alg{d}/\neg\Box\alg{d} = \one_\Alg{C}$. Thus, combining \eqref{eq-th-two-11} and \eqref{eq-th-two-12}, we have
	\begin{equation}
	\Box\alg{c} = \Box\neg\alg{c} = \zero_\Alg{C}. \label{eq-th-two-13}
	\end{equation}
	
	Let us turn our attention to algebra $\Alg{B}$. Because $\pfltr{\Box\alg{d}}$ is a filter generated by all dense elements of $\Alg{A}$, algebra $\Alg{B}$ is a Boolean projection of $\Alg{A}$ and it is finite by the assumption of the theorem. Moreover, by Proposition \ref{pr-WS5}(c), $\Alg{B}$ is a direct product of simple algebras. Suppose $\Alg{B} = \Alg{B}_0 \times \dots \times \Alg{B}_{n-1}$, where $\Alg{B}, i < n$ are simple algebras. If one of the algebras $\Alg{B}_i$ is (isomorphic to) $\two$, algebra $\two$ is clearly a homomorphic image of $\Alg{A}$. All what remains to show is that, the case when every algebra $\Alg{B}_i$ is distinct from $\two$, is impossible.  
	
	Indeed, suppose that for all $i < n$, there is $\alg{b}_i \in \Alg{B}_i$ such that $\zero_{\Alg{B}_i} < \alg{b}_i < \one_{\Alg{B}_i}$. Since h-reduct of $\Alg{B}_i$ is Boolean, we have $\zero_{\Alg{B}_i} < \neg\alg{b}_i < \one_{\Alg{B}_i}$ for all $i < n$. Therefore,  by Proposition \ref{pr-WS5}(b), because simplicity of $\Alg{B}_i$,
	\begin{equation}
	\Box\alg{b}_i = \Box\neg\alg{b}_i = \zero_{\Alg{B}_i}  \label{eq-th-two-14}
	\end{equation}
	
	Let us consider element
	\begin{equation}
	\alg{a} = (\alg{b}_0,\dots,\alg{b}_{n-1},\alg{c}).  \label{eq-th-two-15}
	\end{equation}
	Immediately from \eqref{eq-th-two-13} and \eqref{eq-th-two-14} it follows that $\Box\alg{a} = \Box\neg\alg{a} = \zero$, i.e. the case when all algebras $\Alg{B}_i$ are distinct from $\two$, is impossible.
\end{proof}	

Now, let us consider some classes of algebras arising from logic that can be regarded as $\VWSFi$-algebras with compatible operations.

\subsubsection{Heyting Algebras with Regular Involution}

An algebra $\Alg{A} = (\alg{A};\land,\lor,\impl,\neg,\sim,\zero,\one)$ is a \Def{Heyting algebra with regular involution} if $\Alg{A} = (\alg{A};\land,\lor,\impl,\neg,\zero,\one)$ is a Heyting algebra and $\sim$ is an unary operation
for which the following equations are satisfied:
\begin{itemize}
	\item[(a)] $\sim (\alg{a} \lor \alg{b}) = \ \sim\alg{a} \ \land \sim \alg{b}$;
	\item[(b)] $\sim\sim \alg{a} = \alg{a}$;
	\item[(c)] $\sim\neg\alg{a} = \neg\neg\alg{a}$.
\end{itemize}

\begin{prop} \label{pr-HIfin} Let $\Alg{A}$ be a finitely generated Heyting algebra with rregular involution. If h-reduct of $\Alg{A}$ is Boolean, then $\Alg{A}$ is finite. 
\end{prop}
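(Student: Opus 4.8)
The plan is to show that the hypothesis forces the involution $\sim$ to coincide with the pseudocomplement $\neg$, so that $\Alg{A}$ carries no structure beyond that of its Heyting reduct, and then to invoke the well-known fact that finitely generated Boolean algebras are finite.

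The first step is to observe that, since the Heyting reduct of $\Alg{A}$ is Boolean, the identity $\neg\neg\alg{a} = \alg{a}$ holds throughout $\Alg{A}$, and consequently the unary map $\alg{a} \mapsto \neg\alg{a}$ is a bijection of the underlying set of $\Alg{A}$ onto itself. Plugging $\neg\neg\alg{a} = \alg{a}$ into axiom (c) of a Heyting algebra with regular involution gives $\sim\!\neg\alg{a} = \neg\neg\alg{a} = \alg{a}$ for every $\alg{a} \in \Alg{A}$; replacing $\alg{a}$ by $\neg\alg{b}$ and using $\neg\neg\alg{b} = \alg{b}$ once more yields $\sim\!\alg{b} = \neg\alg{b}$ for all $\alg{b} \in \Alg{A}$. (Axioms (a) and (b) then hold automatically, being De Morgan's law and involutivity of the Boolean complement; note that neither was needed to derive $\sim = \neg$.)

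The remaining step is routine bookkeeping. Since $\sim$ has turned out to be a term operation of the Heyting (equivalently, Boolean) reduct, the subalgebra of $\Alg{A}$ generated by a set $G$ in the full signature coincides with the Boolean subalgebra generated by $G$. Hence finite generation of $\Alg{A}$ means its Boolean reduct is a finitely generated Boolean algebra, and every finitely generated Boolean algebra is finite (the $n$-generated ones being quotients of the free Boolean algebra on $n$ generators, which has $2^{2^n}$ elements; see e.g. \cite{Burris_Sanka}). Therefore $\Alg{A}$ is finite. I do not anticipate a genuine obstacle: the only point that must be handled with care is the substitution identifying $\sim$ with $\neg$, which rests on the Boolean identity $\neg\neg\alg{a} = \alg{a}$ and the bijectivity of $\neg$ that it entails.
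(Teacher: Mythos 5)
Your proposal is correct and takes essentially the same route as the paper: use axiom (c) together with the Boolean identity $\neg\neg\alg{a} = \alg{a}$ to show that $\sim$ coincides with $\neg$, and then conclude from the fact that every finitely generated Boolean algebra is finite. The only cosmetic difference is that you obtain $\sim\alg{b} = \neg\alg{b}$ by substituting $\neg\alg{b}$ into $\sim\neg\alg{a} = \alg{a}$ (so axiom (b) is not needed), whereas the paper applies axiom (b) to the same identity; both steps are valid.
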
	
\begin{proof}
	If h-reduct of the algebra is Boolean, we have $\neg\neg\alg{a} = \alg{a}$. Hence, (c) entails that $\sim\neg\alg{a} = \alg{a}$. Subsequently from (b) we get $\sim\sim\neg\alg{a} = \neg\alg{a}$ and therefore $\sim\alg{a} = \neg\alg{a}$, i.e. $\sim$ and $\neg$ coincide. Thus, $\Alg{A}$ is finite, for every finitely generated Boolean algebra is finite.
\end{proof}

Heyting algebras with regular involution form a variety denoted by $\VHRI$ (see \cite{Meskhi_Discriminator_1982}).
Because $\sim\one = \zero$, algebra $\two$ is a subalgebra of any $\VHRI$-algebra. Moreover, $\VHRI$ is a discriminator variety (see \cite[Theorem 4]{Meskhi_Discriminator_1982}).  

If $\Box$ denotes $\neg\sim$, all conditions \eqref{eq-ws5} are satisfied in every $\VHRI$-algebra. Moreover, $\VWSFi$-congruences coincide with $\VHRI$-congruences. Thus, $\VHRI$-algebras can be viewed as $\VWSFi$-algebras with compatible operation $\sim$. 

\subsubsection{Heyting Algebras with Dual Pseudocomplement} An algebra $\Alg{A} = (\alg{A};\land,\lor,\impl,\zero,\one,\dneg)$ is said to be a \Def{Heyting algebra with Dual Pseudocomplement}, if $\Alg{A} = (\alg{A};\land,\lor,\impl,\zero,\one)$ is a Heyting algebra and $\dneg$ is a dual pseudocomplement, that is for each $\alg{a},\alg{b} \in \alg{A}$ we have 
\begin{equation}
\alg{a} \lor \alg{b} = \one \text{ if and only if } \alg{b} \ge \dneg\alg{a}. \label{eq-dualneg}
\end{equation}

\begin{prop} \label{pr-HDPfin} Let $\Alg{A}$ be a finitely generated Heyting algebra with pseudocomplement. If h-reduct of $\Alg{A}$ is Boolean, then $\Alg{A}$ is finite. 
\end{prop}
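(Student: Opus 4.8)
The plan is to follow the template of the proof of Proposition \ref{pr-HIfin}: show that as soon as the Heyting reduct of $\Alg{A}$ is Boolean, the extra operation $\dneg$ is forced to coincide with the pseudocomplement $\neg$, so that $\Alg{A}$ becomes term-equivalent to its Heyting reduct, which is a finitely generated Boolean algebra and hence finite.

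First I would recall the elementary fact that in a Boolean algebra the equation $\alg{a} \lor \alg{b} = \one$ holds if and only if $\neg\alg{a} \leq \alg{b}$: indeed $\alg{a} \lor \alg{b} = \one$ iff $\neg(\alg{a}\lor\alg{b}) = \zero$ iff $\neg\alg{a}\land\neg\alg{b} = \zero$ iff $\neg\alg{a}\leq\alg{b}$. Comparing this with the defining property \eqref{eq-dualneg} of the dual pseudocomplement, which says $\alg{a}\lor\alg{b}=\one$ iff $\dneg\alg{a}\leq\alg{b}$, and using that the least element satisfying such a condition is uniquely determined, I would conclude that $\dneg\alg{a} = \neg\alg{a}$ for every $\alg{a}\in\alg{A}$.

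Consequently the operation $\dneg$ is definable already in the Heyting reduct $(\alg{A};\land,\lor,\impl,\zero,\one)$, so any finite generating set of $\Alg{A}$ is also a finite generating set of this reduct, which is Boolean by hypothesis. Since every finitely generated Boolean algebra is finite, $\Alg{A}$ is finite as well.

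I do not expect a genuine obstacle here; the proof is entirely parallel to that of Proposition \ref{pr-HIfin}. The only points requiring a little care are getting the direction of the order in \eqref{eq-dualneg} right and invoking the uniqueness of the least element so as to pin down $\dneg\alg{a} = \neg\alg{a}$ rather than merely $\dneg\alg{a} \geq \neg\alg{a}$.
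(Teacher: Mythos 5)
Your proposal is correct and follows essentially the same route as the paper: show that a Boolean h-reduct forces $\dneg\alg{a}=\neg\alg{a}$, so $\Alg{A}$ is a finitely generated Boolean algebra and hence finite. The only cosmetic difference is that you obtain both inequalities from the uniqueness of the least $\alg{b}$ with $\alg{a}\lor\alg{b}=\one$, whereas the paper imports $\neg\alg{a}\leq\dneg\alg{a}$ from Rauszer and derives only the reverse inequality from \eqref{eq-dualneg}.
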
	
\begin{proof}
Recall from \cite[1.3(26)]{Rauszer_Semi-Boolean_1973} that $\neg\alg{a} \leq \dneg\alg{a}$. If h-reduct of the algebra is Boolean, we have $\alg{a} \lor \neg\alg{a} = \one$. Hence, \eqref{eq-dualneg} entails that $\neg\alg{a} \ge \dneg\alg{a}$ and we have $\neg\alg{a} = \dneg\alg{a}$, i.e. $\sim$ and $\neg$ coincide. Therefore $\Alg{A}$ is finite, for every finitely generated Boolean algebra is finite.
\end{proof}

A set of all Heyting algebras with dual pseudocomplement forms a variety denoted by $\HDP$ (see \cite{Sanka_Heyting_1985}).

Clearly, $\two$ is a subalgebra of any nontrivial $\HDP$-algebra.

Let $\boxdot \alg{a} \bydef \neg\dneg \alg{a}$, and we define $\boxdot^0 \alg{a} \bydef \alg{a};$ and $\boxdot^{n+1} \alg{a} \bydef \boxdot\boxdot^n x$. Consider varietyes $\HDP_n, n \ge 0$ defined by identities  (see e.g. \cite{Sanka_Heyting_1985}) 

\[
\boxdot^{n+1} x \approx \boxdot^n x.
\]
We can define a new operator
\[
\Box \alg{a} \bydef \bigwedge_{i \leq n} \boxdot^i \alg{a}.
\]
Then in $\HDP_n$ all equalities \eqref{eq-ws5} hold (see \cite{Sanka_Heyting_1985}), thus we can view $\HDP_n$- algebras as $\VWSFi$-algebras with $\sim$ being an additional compatible operation. 

Recall also from \cite{Sanka_Heyting_1985} that for every $n > 0$, variety $\HDP_n$ is discriminator.
\subsubsection{Double Heyting Algebras}
\Def{Double Heyting algebra} is an algebra $\Alg{A} = (\alg{A};\land,\lor,\impl,\dimpl,\zero,\one)$, where $\Alg{A} = (\alg{A};\land,\lor,\impl,\zero,\one)$ is a Heyting algebra and $\dimpl$ is a dual relative pseudocomplement, that is for each $\alg{a},\alg{b},\alg{c} \in \alg{A}$ we have 
\[
\alg{a} \lor \alg{b} = \alg{c} \text{ if and only if } \alg{b} \ge \alg{c} \dimpl \alg{a}.
\] 

A set of all double Heyting algebras forms a variety denoted by $\DHt$ (see e.g. \cite{Taylor_C_Discriminator_2016}. It is not hard to see that $\dneg \alg{a} = \one \dimpl \alg{a}$. 

\begin{prop} \label{pr-DHfin} Let $\Alg{A}$ be a finitely generated double Heyting algebra. If h-reduct of $\Alg{A}$ is Boolean, then $\Alg{A}$ is finite. 
\end{prop}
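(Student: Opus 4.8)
The plan is to follow the template of Propositions~\ref{pr-WS5fin}, \ref{pr-HIfin}, and \ref{pr-HDPfin}: show that once the h-reduct of $\Alg{A}$ is Boolean, the dual relative pseudocomplement $\dimpl$ becomes term-definable from the Heyting operations, so that $\Alg{A}$ is generated, in its full signature, by the same finite set that generates its h-reduct, and then invoke the fact that every finitely generated Boolean algebra is finite.

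First I would record the relevant Boolean identity. In any Boolean algebra one has $\alg{a} \lor \alg{b} \ge \alg{c}$ if and only if $\alg{b} \ge \alg{c} \land \neg\alg{a}$: for one direction, meet the hypothesis $\alg{a}\lor\alg{b}\ge\alg{c}$ with $\neg\alg{a}$ and use $\alg{a}\land\neg\alg{a}=\zero$; for the converse, join $\alg{c}\land\neg\alg{a}\le\alg{b}$ with $\alg{a}$ and use $\alg{a}\lor\neg\alg{a}=\one$. Since $\alg{c}\dimpl\alg{a}$ is, by the defining adjunction for the dual relative pseudocomplement, the least $\alg{b}$ with $\alg{a}\lor\alg{b}\ge\alg{c}$, this yields
\[
\alg{c}\dimpl\alg{a}=\alg{c}\land\neg\alg{a}\qquad\text{for all }\alg{a},\alg{c}\in\alg{A};
\]
in particular $\dneg\alg{a}=\one\dimpl\alg{a}=\neg\alg{a}$, recovering the computation already used in Proposition~\ref{pr-HDPfin}.

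Next I would conclude structurally. Let $X\subseteq\alg{A}$ be a finite generating set of $\Alg{A}$. By the identity just obtained, the subalgebra of $\Alg{A}$ in the signature $(\land,\lor,\impl,\dimpl,\zero,\one)$ generated by $X$ coincides with the Heyting subalgebra generated by $X$; and a Heyting subalgebra of an algebra whose lattice reduct is Boolean is itself Boolean, since it is closed under $\neg\alg{a}=\alg{a}\impl\zero$ and hence under complementation. Therefore $\Alg{A}$ is a finitely generated Boolean algebra, and so $\Alg{A}$ is finite.

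I do not anticipate a genuine obstacle: the only point requiring a little care is to use the order-theoretic characterization of $\dimpl$, whose value is uniquely determined by the lattice order, when specializing to the Boolean case; after that the argument is a routine variant of the three preceding propositions.
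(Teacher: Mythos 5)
Your proof is correct and follows the same overall strategy as the paper: show that once the h-reduct is Boolean the operation $\dimpl$ is expressible by a Heyting (indeed Boolean) term, so that $\Alg{A}$ is a finitely generated Boolean algebra and hence finite. The only difference is local, in how the key identity is obtained: the paper combines Proposition \ref{pr-HDPfin} (giving $\dneg=\neg$) with the cited Rauszer inequalities $\alg{a}\dimpl\alg{b}\leq\dneg(\alg{a}\impl\alg{b})$ and $\neg(\alg{a}\dimpl\alg{b})\leq\alg{a}\impl\alg{b}$ to get $\alg{a}\dimpl\alg{b}=\neg(\alg{a}\impl\alg{b})$, whereas you compute $\alg{c}\dimpl\alg{a}=\alg{c}\land\neg\alg{a}$ directly from the defining adjunction by an elementary Boolean calculation; the two formulas agree, since $\neg(\alg{a}\impl\alg{b})=\alg{a}\land\neg\alg{b}$ when the h-reduct is Boolean, and your derivation has the mild advantage of being self-contained (no external references and no appeal to Proposition \ref{pr-HDPfin}). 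One small remark: you read the adjunction as $\alg{a}\lor\alg{b}\ge\alg{c}$ if and only if $\alg{b}\ge\alg{c}\dimpl\alg{a}$, which is the standard (and clearly intended) definition, even though the paper's displayed definition writes $\alg{a}\lor\alg{b}=\alg{c}$; with that reading your argument is complete.
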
	
\begin{proof} Recall from Proposition \ref{pr-HDPfin} that if h-reduct of $\Alg{A}$ is Boolean, operations $\neg$ and $\dneg$ coincide. Hence, the following holds 
	\[
	\begin{array}{lll}
	1 &\alg{a} \dimpl \alg{b} \leq \dneg(\alg{a} \impl \alg{b}) & \text{\cite[1.3(29)]{Rauszer_Semi-Boolean_1973}}\\
	2 & \alg{a} \dimpl \alg{b} \leq \neg(\alg{a} \impl \alg{b}) & \text{from 1, because }\Alg{A} \text{ is Boolean and } \dneg = \neg \\ 
	3 & \neg(\alg{a} \dimpl \alg{b}) \leq \alg{a} \impl \alg{b} & \text{\cite[1.3(30)]{Rauszer_Semi-Boolean_1973}}\\
	4 & \neg(\alg{a} \impl \alg{b}) \leq \neg\neg(\alg{a} \dimpl \alg{b}) & \text{from 3}\\
	5 & \neg(\alg{a} \impl \alg{b}) \leq (\alg{a} \dimpl \alg{b}) & \text{from 4, because h-reduct of }  \Alg{A} \text{ is Boolean}\\
	6 & \alg{a} \dimpl \alg{b} = \neg(\alg{a} \impl \alg{b})  & \text{from 3 and 5} 
	\end{array}
		\]
Thus, in $\Alg{A}$ both additional operations are expressed via Boolean operations. Therefore $\Alg{A}$ is a finitely generated as Boolean algebra and it is finite.
\end{proof}

Clearly, $\two$ is a subalgebra of any nontrivial $\DHt$-algebra. Let us observe that for the same reason as for $\HDP$-algebras, $\DHt_n$-algebra can be viewed as a $\WSFi$-algebra with compatible operations  (see e.g. \cite{Sanka_Heyting_1985}). 

Recall also (see \cite{Sanka_Heyting_1985}) that for every $n > 0$, variety $\DHt_n$ is discriminator.

\section{Projective Algebras} \label{sec-proj}

In this section we study projective algebras in the varieties in which compact congruences are factor congruences.

\subsection{Retracts}
	
\begin{definition}
	An algebra $\Alg{B}$ is a \Def{retract} of an algebra $\Alg{A}$ if there are homomorphism $\varphi:\Alg{A} \mapping \Alg{B}$ and $\psi:\Alg{B} \mapping \Alg{A}$ such that $\varphi\circ\psi: \Alg{B} \mapping \Alg{B}$ is the identity. Homomorphism $\psi$ is called \Def{injection}, and homomorphism $\varphi$ is called \Def{retraction}.
\end{definition}

\begin{theorem} \label{th-prod-retact}
	Let $\Alg{A} = \Alg{B} \times \Alg{C}$. Then $\Alg{B}$ is a retract of $\Alg{A}$ if and only if there is a homomorphism $\chi:\Alg{B} \mapping \Alg{C}$. 
\end{theorem}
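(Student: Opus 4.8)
The plan is to prove both directions directly from the definition of retract, using the product decomposition $\Alg{A} = \Alg{B} \times \Alg{C}$ and the canonical projections $\pi_\Alg{B}:\Alg{A}\mapping\Alg{B}$, $\pi_\Alg{C}:\Alg{A}\mapping\Alg{C}$ together with the inclusion-style maps they induce.

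For the ``if'' direction, suppose $\chi:\Alg{B}\mapping\Alg{C}$ is a homomorphism. I would define $\psi:\Alg{B}\mapping\Alg{A}$ by $\psi(\alg{b}) \bydef (\alg{b},\chi(\alg{b}))$; this is a homomorphism since both coordinate maps ($\mathrm{id}_\Alg{B}$ and $\chi$) are. Taking $\varphi \bydef \pi_\Alg{B}$, the first projection, one gets $\varphi(\psi(\alg{b})) = \pi_\Alg{B}(\alg{b},\chi(\alg{b})) = \alg{b}$, so $\varphi\circ\psi$ is the identity on $\Alg{B}$ and $\Alg{B}$ is a retract of $\Alg{A}$.

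For the ``only if'' direction, suppose $\Alg{B}$ is a retract of $\Alg{A}$ via $\varphi:\Alg{A}\mapping\Alg{B}$ and $\psi:\Alg{B}\mapping\Alg{A}$ with $\varphi\circ\psi = \mathrm{id}_\Alg{B}$. Write $\psi(\alg{b}) = (\psi_1(\alg{b}),\psi_2(\alg{b}))$ with $\psi_1 = \pi_\Alg{B}\circ\psi:\Alg{B}\mapping\Alg{B}$ and $\psi_2 = \pi_\Alg{C}\circ\psi:\Alg{B}\mapping\Alg{C}$, both homomorphisms. The map $\psi_2$ is already a homomorphism from $\Alg{B}$ to $\Alg{C}$, so one might hope to simply take $\chi = \psi_2$; the only subtlety is that $\psi_2$ is a priori an honest total map on $\Alg{B}$ (since $\psi$ is), so in fact nothing further is needed and $\chi \bydef \psi_2$ works. (The retraction $\varphi$ is not even used here — the existence of any homomorphic injection $\Alg{B}\mapping\Alg{B}\times\Alg{C}$ already yields a homomorphism $\Alg{B}\mapping\Alg{C}$ by post-composing with $\pi_\Alg{C}$; conversely such an injection always exists once $\varphi$ makes $\Alg{B}$ a retract, but even the weaker hypothesis suffices.)

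I expect no real obstacle: the statement is essentially a restatement of the universal property of the product, and the main point is just bookkeeping of coordinates. The one thing to be careful about in writing it up is to confirm that $\psi(\alg{b}) = (\alg{b},\chi(\alg{b}))$ in the ``if'' direction genuinely has first coordinate $\alg{b}$ — which is forced if we want $\pi_\Alg{B}$ as retraction — rather than $\psi_1(\alg{b})$ for some other endomorphism $\psi_1$ of $\Alg{B}$; taking $\psi_1 = \mathrm{id}$ is the natural and simplest choice and makes the verification immediate.
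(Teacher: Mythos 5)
Your proposal is correct and follows essentially the same route as the paper: in the ``if'' direction you build $\psi(\alg{b})=(\alg{b},\chi(\alg{b}))$ with the first projection as retraction, and in the ``only if'' direction you take $\chi=\pi_{\Alg{C}}\circ\psi$, exactly as in the paper's proof. Your remark that the retraction $\varphi$ is not actually needed for the converse (any homomorphism $\Alg{B}\mapping\Alg{B}\times\Alg{C}$ suffices) is a correct and slightly sharper observation than what the paper states, but the argument is otherwise identical.
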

\begin{proof} Because $\Alg{A} = \Alg{B} \times \Alg{C}$, we can view elements of $\Alg{A}$ as pairs $(\alg{b},\alg{c})$, where $\alg{b} \in \Alg{B}$ and $\alg{c} \in \Alg{C}$. Let
	\begin{equation}
\varphi: (\alg{b},\alg{c}) \mapping \alg{b}, \label{eq-th-prod-retact-1}
\end{equation} 
that is, $\varphi$ is a projection, and therefore, $\varphi$ is a homomorphism of $\Alg{A}$ onto $\Alg{B}$. Our goal is to construct a homomorphism $\psi:\Alg{B} \mapping \Alg{A}$ in such a way that $\varphi$ is a retraction and $\psi$ is an injection.	

First, note that if $\Alg{C}$ is a trivial algebra, then $\Alg{B} \cong \Alg{A}$ and $\Alg{B}$ trivially is a retract of $\Alg{A}$.	
	
Suppose that $\Alg{C}$ is not trivial and $\chi:\Alg{B} \mapping \Alg{C}$ is a homomorphism. Let
	\begin{equation}
	\psi: \alg{b} \mapping (\alg{b},\chi(\alg{b})) \label{eq-th-prod-retact-2}.
	\end{equation} 
	It is clear that $\varphi \circ\psi$ is the identity. Let us verify that $\psi$ is a homomorphism.
	
	Suppose $f$ is an $n$-ary fundamental operation and $\alg{b}_0,\dots,\alg{b}_{n-1} \in \Alg{B}$. Then, taking into account that $\chi$ is a homomorphism, we have
	\begin{equation}
	\begin{split}
	\psi(f(\alg{b}_0,\dots,\alg{b}_{n-1})) & = (f(\alg{b}_0,\dots,\alg{b}_{n-1}),\ \chi(f(\alg{b}_0,\dots,\alg{b}_{n-1})))\\
	& = (f(\alg{b}_0,\dots,\alg{b}_{n-1}),\ f(\chi(\alg{b}_0),\dots,\chi(\alg{b}_{n-1}))))\\
	& = f((\alg{b}_0,\ \chi(\alg{b}_0)),\dots, (\alg{b}_{n-1},\ \chi(\alg{b}_{n-1}))) \\
	& = f(\psi(\alg{b}_0),\dots,\psi(\alg{b}_{n-1})).
	\end{split}
	\end{equation}
	Thus, $\psi$ is a homomorphism, and $\Alg{B}$ is a retract of $\Alg{A}$.
	
	Conversely, suppose that $\Alg{B}$ is a retract of $\Alg{A}$ and $\varphi,\psi$ are respectively retraction and injection. Thus, $\varphi\circ\psi$  is identity. Let us consider a map
	\begin{equation}
	\chi: \alg{b} \mapping \pi_2(\psi(\alg{b})), \label{eq-th-pair-retract-10}
	\end{equation}
	where $\pi_2$ is a projection on the second component. In other words, we let
	\begin{equation}
	\chi: \alg{b} \overset{\psi}{\mapping} \pair{\alg{b}}{\alg{c}}  \overset{\pi_2}{\mapping} \alg{c}. \label{eq-th-pair-retract-11}
	\end{equation}
	Because operations on elements of a direct product being performed independently in each component, it is not hard to see that $\chi$ is indeed a homomorphism. 	
\end{proof}	

Let us recall the definition of projective algebra.

\begin{definition} An algebra $\Alg{A}$ is \Def{projective} in a class of algebras $\Var{K}$  if for every pair of algebras $\Alg{B},\Alg{C} \in \Var{K}$, every homomorphism $\varphi:\Alg{A} \mapping \Alg{C}$, and every homomorphism $\psi: \Alg{B} \mapping \Alg{C}$ of $\Alg{B}$ onto $\Alg{C}$, there exists a homomorphism $\chi: \Alg{A} \mapping \Alg{B}$ such	that $\psi\circ\chi = \varphi$. An algebra $\Alg{A}$ is projective in a variety $\var$ if and only if $\Alg{A}$ is a retract of a free algebra.
\end{definition}

\subsection{Minimal Algebras: Necessary Conditions of Projectivity}

In this section we give a simple necessary condition of projectivity in the varieties that contain minimal algebras. In the following section we study when this necessary condition becomes sufficient.

\subsubsection{Minimal Algebras}

A subalgebra $\Alg{A}'$ of an algebra $\Alg{A}$ is said to be \Def{proper}, if $\Alg{A} \setminus \Alg{A}' \neq \emptyset$.
A non-degenerate algebra $\Alg{A}$ is \Def{minimal}, if $\Alg{A}$ does not contain proper subalgebras (see \cite{Quackenbush_Demi-semi-primal_1971}). Clearly, a non-degenerate algebra is minimal if and only if it is generated by any of its elements. Hence, minimal algebras are countable. If $\var$ is a variety, by $\mvar$ we denote the set of all minimal algebras from $\var$.	   

\begin{example}
	In a variety of monadic Heyting algebras a non-degenerate algebra is minimal if and only if it is generated by the unit element. In particular, there is the only minimal monadic Heyting algebra, namely $\two$. 
\end{example}

\begin{example} $n$-valued Post algebras are minimal for each $n >1$ (see \cite{Burris_Discriminator_1992}).  
\end{example}

Let us consider some properties of minimal algebras.

\begin{prop}
	Let $\var$ be a variety and $\MA$ be a minimal subalgebra of $\Falgv{\omega}$. Then every minimal algebra from $\var$ is a homomorphic image of $\MA$. 
\end{prop}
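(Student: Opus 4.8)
The plan is to exploit the universal mapping property of the free algebra together with the fact that minimal algebras are one-generated. Let $\MA$ be a minimal subalgebra of $\Falgv{\omega}$, and let $\Alg{N}$ be an arbitrary minimal algebra from $\var$. Pick any element $\alg{m}_0 \in \MA$; since $\MA$ is minimal, $\alg{m}_0$ generates $\MA$. Likewise pick any $\alg{n}_0 \in \Alg{N}$, so $\alg{n}_0$ generates $\Alg{N}$. The idea is to build a homomorphism $\MA \mapping \Alg{N}$ sending $\alg{m}_0$ to $\alg{n}_0$; since $\alg{m}_0$ generates $\MA$, such a homomorphism is automatically onto $\Alg{N}$.

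First I would fix a variable $x$ among the free generators of $\Falgv{\omega}$ and note that $\MA$, being generated by $\alg{m}_0$, is the image of a term $t(x)$ evaluated appropriately; more precisely, because $\MA \subseteq \Falgv{\omega}$ and $\MA = \langle \alg{m}_0 \rangle$, there is a term $s$ with $s^{\Falgv{\omega}}(\bar x) = \alg{m}_0$ for suitable free generators $\bar x$. Next, using the universal mapping property of $\Falgv{\omega}$, define a homomorphism $h\colon \Falgv{\omega} \mapping \Alg{N}$ by sending those free generators $\bar x$ to $\alg{n}_0$ (and all other generators anywhere, say to $\alg{n}_0$). Then $h(\alg{m}_0) = h(s^{\Falgv{\omega}}(\bar x)) = s^{\Alg{N}}(\alg{n}_0, \dots, \alg{n}_0)$. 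Now restrict $h$ to the subalgebra $\MA$: this gives a homomorphism $h\!\restriction_\MA \colon \MA \mapping \Alg{N}$, whose image is a subalgebra of $\Alg{N}$ containing $h(\alg{m}_0) = s^{\Alg{N}}(\alg{n}_0,\dots,\alg{n}_0)$.

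The one subtlety to address is that $h(\alg{m}_0)$ need not equal $\alg{n}_0$ on the nose; but this is not an obstacle. The image $h(\MA)$ is a nontrivial subalgebra of $\Alg{N}$ (nontrivial because $\MA$ is non-degenerate and $\Alg{N}$, being minimal hence non-degenerate, has no trivial subalgebras other than itself only if... — more simply, a subalgebra of $\MA$'s image could a priori be trivial only if $\Alg{N}$ has a one-element subalgebra, but any subalgebra of a minimal algebra is either trivial or the whole algebra). Since $\Alg{N}$ is minimal, its only subalgebras are the trivial ones and $\Alg{N}$ itself; and $h(\MA)$ is nontrivial since a homomorphic image of $\MA$ — which is generated by a single element that the constants and fundamental operations act on just as in any nontrivial algebra — cannot collapse to a point unless $\var$'s minimal algebras are themselves trivial, which they are not by definition. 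Hence $h(\MA) = \Alg{N}$, so $h\!\restriction_\MA$ is the desired surjection of $\MA$ onto $\Alg{N}$.

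The main (minor) obstacle is the degeneracy bookkeeping in the previous paragraph: one must argue carefully that $h(\MA)$ cannot be a trivial subalgebra of $\Alg{N}$. The cleanest route is to observe that $\MA$ and $\Alg{N}$ lie in the same variety, that $\Alg{N}$ is non-degenerate and minimal, and that any proper subalgebra of a minimal algebra must be trivial; then rule out triviality of $h(\MA)$ by noting that if $h(\MA)$ were trivial it would be a one-element algebra, forcing $\Alg{N}$ itself to be one-element (since a minimal algebra has a trivial subalgebra only if it is trivial — its single generator would have to generate the one-element algebra), contradicting non-degeneracy of $\Alg{N}$. Everything else is a direct application of the universal property of free algebras and of one-generation of minimal algebras, with no real computation involved.
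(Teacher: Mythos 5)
Your proposal is correct and takes essentially the same route as the paper: produce a homomorphism from $\Falgv{\omega}$ into the given minimal algebra, restrict it to the subalgebra $\MA$, and use minimality of the target to conclude that the image (a subalgebra) is the whole algebra; the paper simply gets the homomorphism from countability of minimal algebras rather than by assigning generators explicitly. Note that under the paper's definition a minimal algebra has \emph{no} proper subalgebras whatsoever (trivial or not), so your degeneracy bookkeeping, while valid, is unnecessary.
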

\begin{proof} Suppose $\Alg{A} \in \var$ is at most countable minimal algebra. Then there is a homomorphism $\varphi:\Falgv{\omega} \mapping \Alg{A}$ that maps $\Falgv{\omega}$ onto $\Alg{A}$. Because $\MA$ is a subalgebra of $\Falgv{\omega}$, the restriction $\varphi'$ of $\varphi$ to $\MA$ is a homomorphism of $\MA$ to $\Alg{A}$. Since $\Alg{A}$ is minimal, $\varphi'$ maps $\MA$ onto $\Alg{A}$.
\end{proof}

\begin{cor} Let $\var$ be a variety and $\MA$ be a finite minimal subalgebra of $\Falgv{\omega}$. Then all minimal algebras from $\var$ are finite and $\Falgv{\omega}$ contains unique up to isomorphism minimal subalgebra.
\end{cor}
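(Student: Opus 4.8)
The corollary follows by combining the preceding proposition with a well-known finiteness fact about finitely generated subalgebras inside a minimal one.

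\medskip

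\emph{Plan.} First, I would invoke the preceding proposition: if $\MA$ is a minimal subalgebra of $\Falgv{\omega}$, then every minimal algebra from $\var$ is a homomorphic image of $\MA$. Now suppose $\MA$ is moreover \emph{finite}. Since a homomorphic image of a finite algebra is finite, it follows immediately that every minimal algebra from $\var$ is finite; this gives the first assertion.

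\medskip

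For the second assertion, I would argue that $\Falgv{\omega}$ contains a minimal subalgebra that is unique up to isomorphism. Existence is already guaranteed: $\MA$ itself is one. For uniqueness, let $\MA'$ be any minimal subalgebra of $\Falgv{\omega}$. By the proposition (applied with $\MA'$ in place of $\MA$), $\MA$ is a homomorphic image of $\MA'$; symmetrically, by the proposition applied to $\MA$, $\MA'$ is a homomorphic image of $\MA$. Thus $\MA$ and $\MA'$ are mutually homomorphic images of one another, and since both are finite (having just shown all minimal algebras are finite), a cardinality count shows both homomorphisms are bijective, hence isomorphisms. Therefore $\MA \cong \MA'$, establishing uniqueness up to isomorphism.

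\medskip

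\emph{Main obstacle.} The only genuine subtlety is the uniqueness step: one must be careful that "mutually homomorphic images" forces isomorphism, which is where finiteness is essential — without it, one could have a proper surjection of an infinite minimal algebra onto itself. Since finiteness of all minimal algebras has already been secured in the first part, this is routine, and the proof is short.
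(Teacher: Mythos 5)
Your proof is correct and follows exactly the route the paper intends: the corollary is stated as an immediate consequence of the preceding proposition, and your argument (homomorphic images of the finite $\MA$ are finite; mutual surjections between two finite minimal subalgebras force a cardinality equality and hence isomorphism) fills in precisely the expected details. No gaps; the use of finiteness in the uniqueness step is exactly where it is needed.
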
	

\subsubsection{ms-Full Varieties}

Often, especially when a variety has the only minimal algebra, every nontrivial algebra $\Alg{A}$ of a variety $\var$ contains at least one minimal subalgebra:
\begin{equation}
\CSub\Alg{A} \cap \mvar \neq \emptyset \label{SMin}. \tag{MS}
\end{equation}

\begin{definition} We say that a variety $\var$ is \Def{ms-full} if every nontrivial algebra from $\var$ has at least one minimal subalgebra. 
\end{definition}

It is not hard to see that if no nontrivial algebra from a variety $\var$ contains a trivial subalgebra and $\Falgv{\omega}$ contains a finite subalgebra, then $\var$ is ms-full.

Recall (see \cite{Blk_Pgz_3}) that a \Def{constant term} of $\Alg{A}$ or a variety $\var$ is any nullary or constant unary term function, or less precisely, the element of $\Alg{A}$ (or of each member of $\var$) that constitutes the range of such a function. A variety $\var$ of algebras is \Def{pointed} if it has at least one constant term, \Def{double-pointed} if it has at least two constant terms that are distinct in any nontrivial member of $\var$.

\begin{example}
	If no nontrivial algebra from a pointed variety $\var$ has a trivial subalgebra, $\var$ is ms-full: any nontrivial algebra from $\var$ contains a minimal subalgebra generated by its constant. Every double pointed variety $\var$ is ms-full, for every nontrivial algebra from $\var$ contains a minimal subalgebra generated by the values of constant terms. For instance, varieties of Heyting or monadic Heyting algebras are double pointed, and hence, they are ms-full.
\end{example}

\subsubsection{Necessary condition of Projectivity} The following simple proposition plays an important role in what follows.

\begin{prop} \label{pr-sub-nontr}
	If a minimal algebra $\MA$ is a homomorphic image of algebra $\Alg{A}$, then  $\MA$ is a homomorphic image of any nontrivial subalgebra of $\Alg{A}$. 
\end{prop}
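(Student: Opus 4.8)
The plan is to prove the contrapositive-friendly statement directly: assume $\MA$ is a minimal algebra and a homomorphic image of $\Alg{A}$, say via a surjective homomorphism $\varphi \colon \Alg{A} \mapping \MA$, and let $\Alg{A}'$ be any nontrivial subalgebra of $\Alg{A}$. I want to exhibit a surjective homomorphism from $\Alg{A}'$ onto $\MA$. The natural candidate is the restriction $\varphi' \bydef \varphi\restriction_{\Alg{A}'} \colon \Alg{A}' \mapping \MA$, which is automatically a homomorphism; the only thing to check is that it is onto.

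First I would observe that the image $\varphi'(\Alg{A}')$ is a subalgebra of $\MA$: the image of a subalgebra under a homomorphism is always a subalgebra. Next, since $\Alg{A}'$ is nontrivial, it contains at least two distinct elements $\alg{a}_1 \neq \alg{a}_2$; because $\MA$ is minimal it is in particular non-degenerate, but more to the point I need $\varphi'(\Alg{A}')$ to be non-degenerate so that minimality of $\MA$ forces $\varphi'(\Alg{A}') = \MA$. This is the one delicate spot: a priori $\varphi$ could collapse all of $\Alg{A}'$ to a single point, in which case $\varphi'(\Alg{A}')$ is a trivial (one-element) subalgebra of $\MA$ and the argument fails. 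So the real content of the proposition must be that this cannot happen, and the reason is that a minimal algebra has no proper subalgebras \emph{at all} — and in the relevant setting (where, e.g., the variety is pointed or has constant terms, as in all the examples of the paper) a one-element subalgebra would itself be a proper subalgebra of $\MA$, contradicting minimality; hence $\MA$ itself has no trivial subalgebra, so every nonempty subalgebra of $\MA$ equals $\MA$. Therefore $\varphi'(\Alg{A}') = \MA$.

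I expect the main obstacle to be precisely pinning down why $\varphi'(\Alg{A}')$ is non-degenerate, i.e. ruling out that a nontrivial $\Alg{A}'$ is mapped to a point. The clean way to handle this is: the definition of minimal algebra says $\MA$ is non-degenerate and has no proper subalgebra; since the one-element subalgebra generated by any single element of $\MA$ is a subalgebra, and it is proper unless $\MA$ itself is trivial (which it is not), $\MA$ has no one-element subalgebra. Consequently the only subalgebra of $\MA$ is $\MA$ itself. Applying this to the subalgebra $\varphi'(\Alg{A}')$ gives $\varphi'(\Alg{A}') = \MA$, so $\varphi'$ is a surjective homomorphism and $\MA \in \CHom\Alg{A}'$, completing the proof.
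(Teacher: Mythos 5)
Your proof is correct and is essentially the paper's own argument: restrict the surjection $\varphi$ to the subalgebra, observe that its image is a subalgebra of $\MA$, and conclude from minimality that the image is all of $\MA$ — noting that under the paper's definition a \emph{proper} subalgebra is any subalgebra other than $\MA$ itself, so one-element subalgebras are already excluded and your ``delicate spot'' disappears without any appeal to pointedness or constant terms. The only blemish is the phrase ``the one-element subalgebra generated by any single element of $\MA$'': in a minimal algebra the subalgebra generated by any element is $\MA$ itself, so you should instead say that a one-element subuniverse, if it existed, would be a proper subalgebra of the non-degenerate algebra $\MA$, contradicting minimality; this does not affect the correctness of the argument.
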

\begin{proof} Let $\varphi:\Alg{A} \mapping \MA$ be a homomorphism of $\Alg{A}$ onto $\MA$ and $\Alg{B} \leq \Alg{A}$ be a subalgebra of $\Alg{A}$. If $\varphi'$ is a restriction of $\varphi$ to $\Alg{B}$, we have 
	\[
	\varphi'(\Alg{B}) \leq \varphi(\Alg{A}) = \MA,
	\] 
	and	$\varphi'(\Alg{B}) = \MA$ because $\MA$ has no proper subalgebras.
\end{proof}	

\begin{cor} 
	Let $\var$ be a variety and $\MA \in \var$ be a minimal algebra. Then $\MA$ is a homomorphic image of every projective in $\var$ algebra.
\end{cor}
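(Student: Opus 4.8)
The plan is to derive the statement from two ingredients already available: the characterization ``projective $=$ retract of a free algebra'' and Proposition~\ref{pr-sub-nontr}. Let $\Alg{P}$ be an algebra projective in $\var$. We may assume $\Alg{P}$ is nontrivial: a trivial algebra has only trivial homomorphic images whereas a minimal algebra is non-degenerate, so the statement is of substance only for nontrivial projective algebras. Being projective, $\Alg{P}$ is a retract of some free algebra $\Alg{F}=\Falgv{X}$, i.e.\ there are homomorphisms $\psi\colon\Alg{P}\mapping\Alg{F}$ and $\varphi\colon\Alg{F}\mapping\Alg{P}$ with $\varphi\circ\psi=\mathrm{id}_{\Alg{P}}$. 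Since $\varphi\circ\psi$ is injective, $\psi$ is injective, so $\psi(\Alg{P})$ is a subalgebra of $\Alg{F}$ isomorphic to $\Alg{P}$; in particular it is a \emph{nontrivial} subalgebra of $\Alg{F}$, whence $\Alg{F}$ itself is nontrivial.

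The next step is to check that $\MA\in\CHom\Alg{F}$. As $\Alg{F}$ is a nontrivial free algebra, it is generated by the set $X$ of free generators together with the values of the constant terms of $\var$, and this generating set is nonempty. Pick a generator $y$ of $\Alg{F}$ and any element $\alg{m}$ of $\MA$ (every element of the minimal algebra $\MA$ generates it). There is a homomorphism $h\colon\Alg{F}\mapping\MA$ with $h(y)=\alg{m}$: if $y\in X$ this is immediate from the universal property of $\Alg{F}$, and if $y$ is the value of a constant term then $h$ is already forced and its value at $y$ is again a generator of $\MA$. In either case the image of $h$ is a subalgebra of $\MA$ containing a generator of $\MA$, hence $h(\Alg{F})=\MA$, so $\MA$ is a homomorphic image of $\Alg{F}$.

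Finally, apply Proposition~\ref{pr-sub-nontr} with the algebra $\Alg{F}$ in the role of $\Alg{A}$: $\MA$ is a homomorphic image of $\Alg{F}$ and $\psi(\Alg{P})$ is a nontrivial subalgebra of $\Alg{F}$, so $\MA$ is a homomorphic image of $\psi(\Alg{P})$, and therefore of $\Alg{P}$ since $\Alg{P}\cong\psi(\Alg{P})$. There is no genuinely difficult point in this argument; the only place calling for a little care is the bookkeeping around trivial algebras and the possibility that $X$ is empty, which is handled by restricting to nontrivial $\Alg{P}$ and observing that a nontrivial free algebra always carries a nonempty generating set surjecting onto the one-generated algebra $\MA$.
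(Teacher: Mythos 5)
Your proof is correct and follows essentially the same route as the paper's: realize the projective algebra as a nontrivial subalgebra of a free algebra, note that the free algebra maps homomorphically onto $\MA$, and then restrict via Proposition \ref{pr-sub-nontr}. The only cosmetic difference is that the paper passes to a free algebra of infinite rank and invokes countability of minimal algebras to get the surjection onto $\MA$, whereas you obtain it directly from the universal property (and the constant terms) together with the fact that $\MA$ is generated by any of its elements.
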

\begin{proof} If an algebra $\Alg{A}$ is projective, $\Alg{A}$ is a subalgebra of a free algebra. Without loosing generality we can assume that $\Alg{A}$ is a subalgebra of a free algebra of some infinite rank. Because minimal algebras are countable, every minimal algebra is a homomorphic image of any free algebra of infinite rank. 
\end{proof}

\begin{definition}
	Let $\var$ be a variety containing minimal algebras. An algebra $\Alg{A} \in \var$ is \Def{mh-full} if $\Alg{A}$ has every algebra from $\mvar$ as a homomorphic image, that is if 
	\begin{equation}
	\mvar \subseteq \CHom\Alg{A}. \label{HMin} \tag{HF}
	\end{equation} 
\end{definition}

Let us observe that, because each minimal algebra being countable, it is a homomorphic image of $\Falgv{\omega}$ and consequently $\Falgv{\omega}$ is always mh-full, provided $\var$ contains minimal algebras. Also,  Proposition \ref{pr-sub-nontr} entails the following.

\begin{prop} \label{pr-mhfull}
	Every nontrivial subalgebra of mh-full algebra is mh-full.
\end{prop}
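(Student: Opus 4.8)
The statement to prove is Proposition~\ref{pr-mhfull}: every nontrivial subalgebra of an mh-full algebra is mh-full.

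The plan is to unwind the definitions and reduce everything to Proposition~\ref{pr-sub-nontr}, which has already been established. Suppose $\Alg{A}$ is mh-full, meaning $\mvar \subseteq \CHom\Alg{A}$, and let $\Alg{B}$ be a nontrivial subalgebra of $\Alg{A}$. I must show $\mvar \subseteq \CHom\Alg{B}$, i.e.\ that every minimal algebra $\MA \in \mvar$ is a homomorphic image of $\Alg{B}$. So fix an arbitrary $\MA \in \mvar$.

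The argument is then a single line. Since $\Alg{A}$ is mh-full, $\MA \in \CHom\Alg{A}$, that is, $\MA$ is a homomorphic image of $\Alg{A}$. Since $\Alg{B}$ is a nontrivial subalgebra of $\Alg{A}$, Proposition~\ref{pr-sub-nontr} applies verbatim and gives that $\MA$ is a homomorphic image of $\Alg{B}$, i.e.\ $\MA \in \CHom\Alg{B}$. As $\MA$ was an arbitrary member of $\mvar$, we conclude $\mvar \subseteq \CHom\Alg{B}$, so $\Alg{B}$ is mh-full.

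There is essentially no obstacle here: the proposition is a direct corollary of Proposition~\ref{pr-sub-nontr}, and the only thing to be careful about is the quantifier bookkeeping (making sure the hypothesis "nontrivial" on $\Alg{B}$ is what is needed, which it is, since Proposition~\ref{pr-sub-nontr} is stated precisely for nontrivial subalgebras). One could also phrase it even more tersely by saying it follows immediately from Proposition~\ref{pr-sub-nontr}, as the excerpt already hints; but spelling out the one-line reduction as above keeps the proof self-contained.
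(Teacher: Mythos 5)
Your proof is correct and matches the paper's own argument: the paper derives Proposition~\ref{pr-mhfull} directly from Proposition~\ref{pr-sub-nontr}, exactly as you do by applying it to each minimal algebra $\MA \in \mvar$ in turn. Nothing is missing; the quantifier bookkeeping you spell out is the whole content.
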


Now, we can give the following necessary condition of projectivity.

\begin{cor} \label{cor-neces} (comp \cite[Theorem 5.1]{Quackenbush_Demi-semi-primal_1971}).
	Let $\var$ be a variety. Then every nontrivial projective algebra $\Alg{A} \in \var$ is mh-full.
\end{cor}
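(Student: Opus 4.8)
The plan is to reduce everything to Proposition \ref{pr-mhfull} together with the structural description of projective algebras as retracts of free algebras. First I would recall that if $\Alg{A}$ is projective in the variety $\var$, then $\Alg{A}$ is a retract of a free algebra, and in particular $\Alg{A}$ (up to isomorphism) is a subalgebra of a free algebra; moreover one may take that free algebra to be $\Falgv{\kappa}$ for some infinite rank $\kappa$, since a retract of $\Falgv{n}$ is also a retract of $\Falgv{\omega}$ via the natural surjection $\Falgv{\omega}\mapping\Falgv{n}$ (or simply because one can always enlarge the rank). Thus without loss of generality $\Alg{A}$ embeds into $\Falgv{\omega}$.

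Next I would invoke the observation made just before Proposition \ref{pr-mhfull}: since every minimal algebra is countable, it is a homomorphic image of $\Falgv{\omega}$, so $\Falgv{\omega}$ is mh-full (here we use the hypothesis implicit in the statement that $\var$ contains minimal algebras; if $\mvar=\emptyset$ the claim is vacuous). Then, because $\Alg{A}$ is a nontrivial subalgebra of the mh-full algebra $\Falgv{\omega}$, Proposition \ref{pr-mhfull} gives immediately that $\Alg{A}$ is mh-full, i.e. $\mvar\subseteq\CHom\Alg{A}$, which is exactly the assertion.

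There is essentially no obstacle here: the content is entirely packaged in Proposition \ref{pr-sub-nontr} (restriction of a surjection onto a minimal algebra to a nontrivial subalgebra is still surjective) and in the characterization of projectives as subalgebras of free algebras. The only point requiring a word of care is the passage from "retract of some free algebra" to "subalgebra of $\Falgv{\omega}$", which is handled by composing with the canonical homomorphism onto a free algebra of the desired (infinite) rank; this is the step I would state explicitly but it is routine.
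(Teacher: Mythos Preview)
Your approach is essentially the paper's own: projective $\Rightarrow$ subalgebra of a free algebra of infinite rank, free algebras of infinite rank are mh-full (minimal algebras being countable), and then Proposition~\ref{pr-mhfull} (equivalently Proposition~\ref{pr-sub-nontr}) finishes. One small slip: your reduction ``without loss of generality $\Alg{A}$ embeds into $\Falgv{\omega}$'' is only justified when $\Alg{A}$ is a retract of a free algebra of \emph{finite} rank; an uncountable projective $\Alg{A}$ need not embed in $\Falgv{\omega}$. The paper avoids this by working with $\Falgv{\kappa}$ for an arbitrary infinite $\kappa$ throughout, and your argument goes through verbatim with that change, since any free algebra of infinite rank surjects onto every (countable) minimal algebra.
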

The proof follows immediately from Proposition \ref{pr-sub-nontr}.

\begin{example} \label{ex-sinfglema} Variety $\VMHA$ of all monadic Heyting algebras contains the only minimal algebra: $\two$. Hence, every projective $\VMHA$-algebra must have $\two$ as its homomorphic image. Thus, every finite projective $\VMHA$-algebra must have at least one open atom.
\end{example}	

Theorem \ref{th-prod-retact} has another useful corollary.

\begin{cor} \label{cor-retract}
	Let $\var$ be an ms-full variety and let $\Alg{A} = \Alg{B} \times \Alg{C}$, where $\Alg{B},\Alg{C} \in \var$. If $\Alg{B}$ is mh-full, then $\Alg{B}$ is a retract of $\Alg{A}$. 
\end{cor}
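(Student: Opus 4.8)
The plan is to apply Theorem \ref{th-prod-retact}, which reduces the claim that $\Alg{B}$ is a retract of $\Alg{A} = \Alg{B} \times \Alg{C}$ to the existence of a homomorphism $\chi \colon \Alg{B} \mapping \Alg{C}$. So the whole task is to produce such a homomorphism from the hypotheses that $\var$ is ms-full and $\Alg{B}$ is mh-full.

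First I would dispose of the trivial case: if $\Alg{C}$ is trivial then $\Alg{B} \cong \Alg{A}$ and $\Alg{B}$ is trivially a retract (this is already handled inside the proof of Theorem \ref{th-prod-retact}, so invoking that theorem suffices). So assume $\Alg{C}$ is nontrivial. Since $\var$ is ms-full, $\Alg{C}$ contains a minimal subalgebra $\MA \in \mvar$; in particular $\MA$ embeds into $\Alg{C}$ via some injection $\iota \colon \MA \mapping \Alg{C}$. Since $\Alg{B}$ is mh-full, by definition $\mvar \subseteq \CHom\Alg{B}$, so there is a surjective homomorphism $h \colon \Alg{B} \mapping \MA$. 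The composition $\chi \bydef \iota \circ h \colon \Alg{B} \mapping \Alg{C}$ is then a homomorphism of $\Alg{B}$ into $\Alg{C}$, which is exactly what is needed.

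With $\chi$ in hand, Theorem \ref{th-prod-retact} immediately gives that $\Alg{B}$ is a retract of $\Alg{A} = \Alg{B} \times \Alg{C}$, completing the proof. There is essentially no obstacle here: the content is entirely in Theorem \ref{th-prod-retact} and in the two definitions (ms-full supplies a minimal subalgebra of the ``spare'' factor $\Alg{C}$, mh-full supplies a homomorphism onto any minimal algebra, in particular onto that one). The only point requiring a moment's care is making sure the case $\Alg{C}$ trivial is not overlooked, since then $\Alg{C}$ may fail to contain a minimal subalgebra — but as noted this case is subsumed by Theorem \ref{th-prod-retact}.
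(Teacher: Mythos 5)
Your proposal is correct and follows essentially the same route as the paper: split off the case of trivial $\Alg{C}$, use ms-fullness to find a minimal subalgebra $\MA \leq \Alg{C}$, use mh-fullness of $\Alg{B}$ to get a surjection onto $\MA$, compose to obtain $\chi\colon \Alg{B} \mapping \Alg{C}$, and invoke Theorem \ref{th-prod-retact}. The paper's own proof is just a terser version of exactly this argument.
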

\begin{proof}  Indeed, if $\Alg{C}$ is trivial, $\Alg{B}$ is a retract of $\Alg{A}$. If $\Alg{C}$ is nontrivial, then, by the assumption, $\Alg{C}$ contains a minimal subalgebra $\MA$. Also by the assumption, $\MA$ is a homomorphic image of $\Alg{B}$. Application of Theorem \ref{th-prod-retact} completes the proof.
\end{proof}

\subsection{Projectivity}

If $\theta$ is a factor congruence on an algebra $\Alg{A}$, we can re-phrase Theorem \ref{th-prod-retact} for factor congruences.

\begin{theorem} \label{th-pair-retract}
	Let $\Alg{A}$ be an algebra and $\theta,\theta'$ be a pair of factor congruences on $\Alg{A}$. Then $\Alg{A}/\theta$ is a retract of $\Alg{A}$ if and only if there is a homomorphism $\chi:\Alg{A}/\theta \mapping \Alg{A}/\theta'$. 
\end{theorem}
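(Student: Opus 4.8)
The plan is to reduce the statement to Theorem \ref{th-prod-retact} via the structural decomposition furnished by Proposition \ref{pr-factor}. Since $\theta,\theta'$ is a pair of factor congruences on $\Alg{A}$, Proposition \ref{pr-factor} gives an isomorphism
\[
\iota : \Alg{A} \longrightarrow \Alg{A}/\theta \times \Alg{A}/\theta', \qquad \iota(\alg{a}) = (\alg{a}/\theta,\ \alg{a}/\theta').
\]
The first thing I would record is that being a retract is invariant under isomorphism: if $\Alg{A} \cong \Alg{A}'$ and $\Alg{B}$ is a retract of $\Alg{A}$, then $\Alg{B}$ is a retract of $\Alg{A}'$ (compose the retraction/injection pair with the isomorphism and its inverse). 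Moreover, under $\iota$ the natural quotient map $\Alg{A} \mapping \Alg{A}/\theta$ corresponds exactly to the first projection $\Alg{A}/\theta \times \Alg{A}/\theta' \mapping \Alg{A}/\theta$, so "$\Alg{A}/\theta$ is a retract of $\Alg{A}$" is literally the same assertion as "$\Alg{A}/\theta$ is a retract of $\Alg{A}/\theta \times \Alg{A}/\theta'$".

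Granting that, I would simply invoke Theorem \ref{th-prod-retact} with $\Alg{B} \bydef \Alg{A}/\theta$ and $\Alg{C} \bydef \Alg{A}/\theta'$: that theorem says $\Alg{B}$ is a retract of $\Alg{B} \times \Alg{C}$ if and only if there is a homomorphism $\chi : \Alg{B} \mapping \Alg{C}$, i.e. a homomorphism $\chi : \Alg{A}/\theta \mapping \Alg{A}/\theta'$. Chaining this with the isomorphism invariance noted above yields the desired equivalence, and this finishes the argument.

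There is essentially no real obstacle here; the only point that needs a line of care is the identification of the quotient map $\Alg{A} \mapping \Alg{A}/\theta$ with the coordinate projection under the isomorphism $\iota$ — i.e. checking that the kernel of the first projection composed with $\iota$ is $\theta$, which is immediate from the definition of $\iota$. One may alternatively phrase the whole proof without even mentioning projections: transport a hypothetical retraction/injection pair for $\Alg{A}$ across $\iota$ to get one for $\Alg{A}/\theta \times \Alg{A}/\theta'$ (and conversely), and then quote Theorem \ref{th-prod-retact} verbatim. Either way the proof is short, and Theorem \ref{th-prod-retact} together with Proposition \ref{pr-factor} does all the work.
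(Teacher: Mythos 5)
Your argument is correct and is exactly the route the paper intends: the paper gives no separate proof of Theorem \ref{th-pair-retract}, presenting it as a re-phrasing of Theorem \ref{th-prod-retact} via the isomorphism $\Alg{A} \cong \Alg{A}/\theta \times \Alg{A}/\theta'$ from Proposition \ref{pr-factor}, which is precisely what you spell out (including the harmless bookkeeping about transporting the retraction/injection pair across the isomorphism).
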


\begin{cor} \label{cor-proj}
	Let $\var$ be an ms-full variety and let $\theta$ be a factor congruence on $\Falgv{n}$ such that $\Falgv{n}/\theta$ is nontrivial. Then $\Falgv{n}/\theta$ is projective if and only $\Falgv{n}/\theta$ is mh-full. 
\end{cor}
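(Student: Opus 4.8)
The plan is to observe that this corollary is essentially an assembly of earlier results, with the hypothesis that $\theta$ is a factor congruence being precisely what lets us pass from a quotient to a direct factor. I would first dispose of the easy direction: if $\Falgv{n}/\theta$ is projective and nontrivial, then Corollary \ref{cor-neces} immediately gives that it is mh-full, since that corollary says every nontrivial projective algebra in a variety is mh-full.

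For the converse, suppose $\Falgv{n}/\theta$ is mh-full. Since $\theta$ is a factor congruence on $\Falgv{n}$, there is a complementary factor congruence $\theta'$, and Proposition \ref{pr-factor} yields the decomposition
\[
\Falgv{n} \cong \Falgv{n}/\theta \times \Falgv{n}/\theta'.
\]
Both factors lie in $\var$, being homomorphic images of $\Falgv{n}$. Now I would apply Corollary \ref{cor-retract} with $\Alg{A} = \Falgv{n}$, $\Alg{B} = \Falgv{n}/\theta$, and $\Alg{C} = \Falgv{n}/\theta'$: since $\var$ is ms-full and $\Alg{B}$ is mh-full, Corollary \ref{cor-retract} tells us that $\Falgv{n}/\theta$ is a retract of $\Falgv{n}$. (Note Corollary \ref{cor-retract} already absorbs the degenerate case where $\Falgv{n}/\theta'$ is trivial, in which case $\Falgv{n}/\theta \cong \Falgv{n}$ outright.) Because $\Falgv{n}$ is a free algebra of $\var$ and, by the definition of projectivity recalled above, an algebra is projective in $\var$ precisely when it is a retract of a free algebra, we conclude that $\Falgv{n}/\theta$ is projective.

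I do not expect a genuine obstacle here: the content has been front-loaded into Theorem \ref{th-prod-retact} (reformulated for factor congruences as Theorem \ref{th-pair-retract}), Corollary \ref{cor-retract}, and Corollary \ref{cor-neces}. The one point worth stating carefully is that the factor-congruence hypothesis is used exactly once but crucially — it is what converts "$\Falgv{n}/\theta$ is a quotient" into "$\Falgv{n}/\theta$ is a direct factor," so that the retract machinery of Theorem \ref{th-prod-retact} applies; for a general (non-factor) congruence the argument would break down. Everything else is bookkeeping about membership in $\var$ and the trivial-factor case.
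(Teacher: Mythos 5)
Your proof is correct and takes essentially the same route as the paper: the forward direction by the necessary condition of projectivity (the paper argues it directly via mh-fullness of $\Falgv{\omega}$ and Proposition \ref{pr-mhfull}, to the same effect), and the converse via the decomposition $\Falgv{n} \cong \Falgv{n}/\theta \times \Falgv{n}/\theta'$ followed by the retract criterion. Your appeal to Corollary \ref{cor-retract} merely packages the minimal-subalgebra-plus-homomorphism argument that the paper carries out inline with Theorem \ref{th-pair-retract}, so there is no substantive difference.
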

\begin{proof} Suppose $\theta$ is a factor congruence on $\Falgv{n}$ and $\Falgv{n}/\theta$ is not trivial. 
	
If  $\Falgv{n}/\theta$ is projective, then $\Falgv{n}/\theta$ is a nontrivial subalgebra of $\Falgv{\omega}$ which is mh-full. Hence, by Proposition \ref{pr-mhfull},  $\Falgv{n}/\theta$ is mh-full too.
	
Conversely, suppose $\Falgv{n}/\theta$ is mh-full. By the assumption, $\theta$ is a factor congruence, hence there is a complement congruence $\theta'$ and $\Falgv{n} = \Falgv{n}/\theta \times \Falgv{n}/\theta'$. 

If $\Falgv{n}/\theta'$ is trivial, then $\Falgv{n} = \Falgv{n}/\theta$, and therefore, $\Falgv{n}/\theta$ is projective. 

If $\Falgv{n}/\theta'$ is not trivial, then, because $\var$ is ms-full, $\Falgv{n}/\theta'$ has a minimal subalgebra $\MA$. Since $\Falgv{n}/\theta$ is mh-full, $\MA$ is a homomorphic image of $\Falgv{n}/\theta$. That is, there is a homomorphism  $\chi: \Falgv{n}/\theta \mapping \Falgv{n}/\theta'$, so we can apply Theorem \ref{th-pair-retract} and conclude that $\Falgv{n}/\theta$ is a retract of $\Falgv{n}$, that is, $\Falgv{n}/\theta$ is projective.
\end{proof}

Let us recall that finitely presented algebras are exactly factors of free algebras of finite rank by compact congruences. Hence, from the above Corollary we get:

\begin{cor} \label{cor-fp-proj}
	Let $\var$ be an ms-full variety in which each compact congruence is a factor congruence. Then a nontrivial finitely presented algebra $\Alg{A} \in \var$ is projective if and only it is mh-full. 
\end{cor}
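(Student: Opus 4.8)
The plan is to combine Corollary \ref{cor-proj} with the characterization of finitely presented algebras recalled just before the statement. Recall that an algebra $\Alg{A} \in \var$ is finitely presented precisely when $\Alg{A} \cong \Falgv{n}/\theta$ for some finite $n$ and some compact $\var$-congruence $\theta$ on $\Falgv{n}$. Since by hypothesis every compact congruence is a factor congruence, this $\theta$ is in particular a factor congruence. Thus the statement about finitely presented algebras reduces immediately to the statement about factors of finite-rank free algebras by factor congruences, which is exactly Corollary \ref{cor-proj}.

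First I would fix a nontrivial finitely presented $\Alg{A} \in \var$ and write $\Alg{A} \cong \Falgv{n}/\theta$ with $\theta$ a compact, hence factor, congruence on $\Falgv{n}$; nontriviality of $\Alg{A}$ transfers to $\Falgv{n}/\theta$. Then I would invoke Corollary \ref{cor-proj} to get that $\Falgv{n}/\theta$ is projective if and only if it is mh-full, and transport both properties across the isomorphism $\Alg{A} \cong \Falgv{n}/\theta$: projectivity is clearly an isomorphism invariant, and mh-fullness is too since it is defined purely in terms of the homomorphic images of the algebra (condition \eqref{HMin}). For the converse direction one should note that any nontrivial finitely presented $\Alg{A}$ arises this way, so the equivalence is stated for all such $\Alg{A}$.

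There is essentially no obstacle here: the only things to check are the bookkeeping that "compact congruence" plus the variety hypothesis yields "factor congruence" (immediate from the assumption), and that the two properties in question are invariant under isomorphism (routine). The substantive content has already been carried out in Theorem \ref{th-pair-retract} and Corollary \ref{cor-proj}; this corollary is merely the translation of that result from the language of quotients of free algebras by factor congruences into the language of finitely presented algebras, using Proposition \ref{pr-discr}(d)-style facts abstracted to the hypothesis of the statement. So the proof will be two or three sentences long, consisting of the reduction described above.
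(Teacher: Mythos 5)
Your proposal is correct and is essentially the paper's own argument: the paper derives this corollary from Corollary \ref{cor-proj} by exactly the same one-line reduction, recalling that finitely presented algebras are precisely the quotients of finite-rank free algebras by compact congruences, which by hypothesis are factor congruences. The extra bookkeeping you mention (isomorphism invariance of projectivity and mh-fullness, transfer of nontriviality) is routine and implicit in the paper.
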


Recall also that in discriminator varieties all compact congruences are principal and all principal congruences are factor congruences (see Proposition  \ref{pr-discr}). 
\begin{cor} \label{cor-disc-proj}
	Let $\var$ be an ms-full discriminator variety. Then a finitely presented algebra $\Alg{A} \in \var$ is projective if and only if every minimal algebra is a homomorphic image of $\Alg{A}$. 
\end{cor}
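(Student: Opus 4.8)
The plan is to read the statement off Corollary~\ref{cor-fp-proj} together with Proposition~\ref{pr-discr}. The single thing to verify is that an ms-full discriminator variety $\var$ meets the hypotheses of Corollary~\ref{cor-fp-proj}: it is ms-full by assumption, so it remains to check that every compact congruence on every $\Alg{A}\in\var$ is a factor congruence. This is exactly the combination of Proposition~\ref{pr-discr}(c), which gives that every compact congruence on $\Alg{A}$ is principal, with Proposition~\ref{pr-discr}(d), which gives that every principal congruence on $\Alg{A}$ is a factor congruence.

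With that in hand, Corollary~\ref{cor-fp-proj} yields that a nontrivial finitely presented algebra $\Alg{A}\in\var$ is projective if and only if it is mh-full, and one then simply unfolds the definition. Provided $\var$ contains minimal algebras --- which it does whenever $\var$ is nontrivial, since then $\Falgv{\omega}$ is nontrivial and, by ms-fullness, has a minimal subalgebra --- the algebra $\Alg{A}$ is mh-full exactly when \eqref{HMin} holds, i.e.\ exactly when every minimal algebra of $\var$ is a homomorphic image of $\Alg{A}$. This is the asserted equivalence for nontrivial $\Alg{A}$. For the trivial algebra the equivalence is vacuous: in the similarity types considered here there is always a constant term, so ms-fullness forces no nontrivial member of $\var$ to possess a trivial subalgebra; hence a trivial algebra is neither projective (it would have to embed into a free algebra, and all free algebras of $\var$ are then nontrivial) nor mh-full (minimal algebras are non-degenerate). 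If $\var$ is itself trivial, there are no minimal algebras and both sides hold vacuously.

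I do not anticipate any genuine obstacle: the mathematical content is carried entirely by the earlier results, and the argument is pure bookkeeping --- translating ``every compact congruence is a factor congruence'' into ``discriminator variety'' via Proposition~\ref{pr-discr}, and ``mh-full'' into ``has every minimal algebra as a homomorphic image'' via the definition. The only point demanding a little care is that Corollary~\ref{cor-fp-proj} is phrased for nontrivial algebras, so one must either restrict the present statement accordingly or dispose of the trivial algebra separately, as indicated above.
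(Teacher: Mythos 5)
Your argument is exactly the paper's: the corollary is obtained by feeding Proposition~\ref{pr-discr}(c) and (d) (compact $\Rightarrow$ principal $\Rightarrow$ factor congruence) into Corollary~\ref{cor-fp-proj} and unfolding the definition of mh-fullness, which is all the paper does. Your extra discussion of the trivial algebra goes beyond the paper (which silently drops the word ``nontrivial'' from Corollary~\ref{cor-fp-proj}); just note that your claim that the similarity type must have a constant term is not justified by the stated hypotheses, so the safer of your two options is the one you already mention, namely reading the statement as restricted to nontrivial $\Alg{A}$.
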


Often a variety contains just one minimal algebra: for instance, varieties of Heyting algebras or monadic Heyting algebras. In this case the above corollary can be stated as follows.

\begin{cor} \label{cor-disc-unique-single-proj}
	Let $\var$ be a discriminator variety containing a single minimal algebra $\MA$ which embeds in every nontrivial algebra. Then, a finitely presented algebra $\Alg{A}$ is projective if and only if $\MA$ is a homomorphic image of $\Alg{A}$. 
\end{cor}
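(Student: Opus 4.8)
The plan is to obtain this statement as an immediate specialization of Corollary~\ref{cor-disc-proj}. First I would verify that the hypotheses of that corollary are satisfied, i.e. that $\var$ is an ms-full discriminator variety. That $\var$ is discriminator is given. Ms-fullness follows from the assumption that $\MA$ embeds in every nontrivial member of $\var$: if $\Alg{B} \in \var$ is nontrivial, then the image of such an embedding is a subalgebra of $\Alg{B}$ isomorphic to $\MA$, hence minimal, so $\CSub\Alg{B} \cap \mvar \neq \emptyset$, which is exactly condition (MS). Thus $\var$ is ms-full.

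Next I would record the reduction coming from uniqueness of the minimal algebra. Since $\MA$ is, by hypothesis, the only minimal algebra of $\var$ up to isomorphism, we have $\mvar = \{\MA\}$. Consequently, for a finitely presented $\Alg{A} \in \var$, the requirement in Corollary~\ref{cor-disc-proj} that every minimal algebra from $\var$ be a homomorphic image of $\Alg{A}$ collapses to the single condition $\MA \in \CHom\Alg{A}$, that is, $\MA$ is a homomorphic image of $\Alg{A}$ (equivalently, $\Alg{A}$ is mh-full).

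Finally, applying Corollary~\ref{cor-disc-proj} directly yields: a finitely presented $\Alg{A} \in \var$ is projective if and only if $\MA$ is a homomorphic image of $\Alg{A}$, which is the assertion. There is essentially no obstacle: the substantive work has already been done in Corollaries~\ref{cor-disc-proj} and~\ref{cor-proj} and in Theorem~\ref{th-pair-retract}. The only points requiring a word of care are that "single minimal algebra" must be read up to isomorphism so that $\mvar$ genuinely reduces to a singleton, and that the hypothesis "embeds in every nontrivial algebra," together with minimality of $\MA$, is precisely what guarantees ms-fullness of $\var$.
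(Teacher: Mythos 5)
Your argument is correct and coincides with the paper's intended justification: the corollary is exactly the specialization of Corollary~\ref{cor-disc-proj} to the case $\mvar = \{\MA\}$, with ms-fullness supplied by the hypothesis that $\MA$ embeds in every nontrivial member of $\var$. Nothing further is needed.
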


Combining Proposition \ref{pr-sub-nontr} and Corollary \ref{cor-disc-unique-single-proj} we obtain the following.

\begin{cor} \label{cor-disc-unique-proj}
	Let $\var$ be a discriminator variety containing a single minimal algebra $\MA$ which embeds in every nontrivial algebra. Then any finitely presented subalgebra of $\Falgv{\omega}$ is projective. 
\end{cor}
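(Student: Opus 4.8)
The plan is to reduce the statement to the projectivity criterion already established in Corollary~\ref{cor-disc-unique-single-proj}, so that no new combinatorial work is needed. Fix a finitely presented subalgebra $\Alg{A}$ of $\Falgv{\omega}$. First I would dispose of the degenerate possibility that $\Alg{A}$ is trivial: in that case $\Alg{A}$ is a one-element subalgebra of $\Falgv{\omega}$, and the inclusion $\Alg{A} \hookrightarrow \Falgv{\omega}$ together with the (unique, hence constant) homomorphism $\Falgv{\omega} \mapping \Alg{A}$ realize $\Alg{A}$ as a retract of the free algebra $\Falgv{\omega}$; by the characterization of projectivity in a variety recalled just before this statement, $\Alg{A}$ is then projective. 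Note that this sub-argument uses neither that $\var$ is discriminator nor that $\var$ is ms-full, so it is harmless to keep. Thus the substantive case is when $\Alg{A}$ is nontrivial.

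For nontrivial $\Alg{A}$, the first step is to recall the observation made immediately after the definition of mh-full: $\Falgv{\omega}$ is mh-full whenever $\var$ contains minimal algebras, since every minimal algebra is countable and therefore a homomorphic image of the free algebra of countable rank. Because here $\var$ has the single minimal algebra $\MA$, this amounts to $\MA \in \CHom\Falgv{\omega}$. The second step is to transfer this property to $\Alg{A}$: since $\Alg{A}$ is a nontrivial subalgebra of $\Falgv{\omega}$ and $\MA$ is a homomorphic image of $\Falgv{\omega}$, Proposition~\ref{pr-sub-nontr} gives that $\MA$ is a homomorphic image of $\Alg{A}$. The third step is simply to invoke Corollary~\ref{cor-disc-unique-single-proj}: $\Alg{A}$ is finitely presented and has $\MA$ as a homomorphic image, hence $\Alg{A}$ is projective. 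This is exactly the ``combining Proposition~\ref{pr-sub-nontr} and Corollary~\ref{cor-disc-unique-single-proj}'' route indicated in the text.

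I do not expect any real obstacle: the proof is bookkeeping on top of the machinery of Section~\ref{sec-proj}. The only mild point of care is the degenerate case of a trivial subalgebra of $\Falgv{\omega}$, which the direct retract argument above settles; alternatively one could avoid the case split altogether by appealing to Corollary~\ref{cor-disc-proj} instead of Corollary~\ref{cor-disc-unique-single-proj}, but the stated route is cleaner and matches the remark preceding the corollary.
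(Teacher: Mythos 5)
Your proposal is correct and follows essentially the same route as the paper: observe that $\MA$, being countable, is a homomorphic image of $\Falgv{\omega}$, transfer this to the nontrivial subalgebra $\Alg{A}$ via Proposition~\ref{pr-sub-nontr}, and conclude with Corollary~\ref{cor-disc-unique-single-proj}. Your explicit treatment of the degenerate one-element subalgebra (via a direct retract argument) is a harmless addition that the paper simply leaves implicit.
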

\begin{proof} Suppose a finitely presented algebra $\Alg{A}$ is a subalgebra of $\Falgv{\omega}$. Then, because $\Alg{A}$ is finitely generated, $\Alg{A}$ is a subalgebra of $\Falgv{n}$ for some $n \in \omega$. By Proposition \ref{pr-sub-nontr}, $\MA \in \CHom\Alg{A}$. By Corollary \ref{cor-disc-unique-single-proj}, $\Alg{A}$ is projective. 
\end{proof}

Let us apply the obtain criterion of projectivity. For this, recall that varieties $\VWSFi$, $\VHRI$ and $\HDP_n$ are discriminator varieties and have a minimal $\two$ that embeds in every nontrivial algebra. 

\begin{theorem} \label{th-crit-appl}
	Let variety $\var$ be a subvariety of $\VWSFi$ (or $\VHRI$, or $\HDP_n$, or $\DHt_n$). Then a finitely presented in $\var$ algebra is projective if and only if it has two-element algebra as a homomorphic image. 
\end{theorem}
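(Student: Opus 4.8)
The plan is to deduce this directly from Corollary \ref{cor-disc-unique-single-proj}: in a discriminator variety possessing a single minimal algebra $\MA$ that embeds into every nontrivial algebra, a finitely presented algebra $\Alg{A}$ is projective if and only if $\MA$ is a homomorphic image of $\Alg{A}$. So the entire argument amounts to checking that a subvariety $\var$ of $\VWSFi$ (respectively $\VHRI$, $\HDP_n$, $\DHt_n$, with $n>0$ in the last two cases) satisfies these hypotheses with $\MA = \two$.

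First I would observe that $\var$ is itself a discriminator variety. Each of the four ambient varieties is discriminator (as recalled above and in Section \ref{sec-prel}), so there is a discriminator term $t(x,w,z)$ in its signature; since $\var$ shares that signature and $\var \subseteq \VWSFi$ (resp. the others), the term $t$ still acts as the discriminator on every nontrivial algebra of $\var$, which is exactly the definition of $\var$ being discriminator.

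Second, I would verify the minimal-algebra conditions. In each of the four signatures the two-element algebra $\two$ is a member of $\var$ and is minimal: any subalgebra of $\two$ must contain the constants $\zero$ and $\one$, hence equals $\two$, so $\two$ has no proper subalgebra. Moreover $\two$ embeds into every nontrivial $\var$-algebra --- this is recorded in Section \ref{sec-prel} for $\VWSFi$, $\VHRI$, $\HDP$ and $\DHt$, and it is inherited by the subvariety $\var$. It follows that $\two$ is the \emph{only} minimal algebra of $\var$: a minimal algebra $\MA \in \var$ is nontrivial, hence contains a copy of $\two$ as a subalgebra, and, having no proper subalgebra, must coincide with $\two$.

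With all the hypotheses of Corollary \ref{cor-disc-unique-single-proj} verified for $\MA = \two$, the corollary yields the statement. I do not expect a genuine obstacle here; the only point needing a moment's care is the passage to the subvariety, namely that it preserves both the discriminator property and the signature, so that the above analysis of $\two$ goes through unchanged. (One could, if desired, combine this with Theorem \ref{th-two} to replace ``has $\two$ as a homomorphic image'' by the internal condition $\Box\alg{a}\neq\Box\neg\alg{a}$ whenever the relevant Boolean projection is finite, but that refinement is not needed for the statement as given.)
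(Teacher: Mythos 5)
Your proposal is correct and follows essentially the same route as the paper: the theorem is obtained by applying Corollary \ref{cor-disc-unique-single-proj} after recalling that $\VWSFi$, $\VHRI$, $\HDP_n$ and $\DHt_n$ are discriminator varieties in which the minimal algebra $\two$ embeds into every nontrivial algebra. Your extra verification that these hypotheses (discriminator term, uniqueness of the minimal algebra) pass to an arbitrary subvariety is exactly the point the paper leaves implicit, and it is handled correctly.
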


\section{Primitive Quasivarieties} \label{sec-prim}

A quasivariety $\qvar$ is said to be \Def{primitive} (see e.g.  \cite{GorbunovBookE}) if every subquasivariety $\qvar_{\var_0} \subseteq \qvar$ is a relative variety, that is, $\qvar_0 = \qvar \cap \var$ for some variety $\var$.

Given a variety $\var$, by $\varq$ we denote the quasivariety generated by $\Falgv{\omega}$, thus, $\varq$ is the smallest quasivariety equational closure of which is $\var$. It is clear that for any primitive quasivariety $\qvar$, we have $\qvar = \varq$, where $\var$ is the equational closure of $\qvar$.

\begin{prop} \label{pr-prim}
	Let $\var$ be a variety. Then $\varq$ is primitive if and only if for every subvariety $\var_0 \subseteq \var$ such that $\falg{\omega}{\var_0} \in \varq$
	\begin{equation}
	\var_0 \cap \varq = \qvar_{\var_0}. \label{eq-pr-prim-0}
	\end{equation}
\end{prop}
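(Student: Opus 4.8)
The plan is to prove both directions of the biconditional, working from the definition of primitivity: $\varq$ is primitive if and only if every subquasivariety of $\varq$ is of the form $\varq \cap \var_1$ for some variety $\var_1$. Throughout, the key reduction is that by Proposition~\ref{pr-qfpgen}, every quasivariety is generated by its finitely presented algebras, so subquasivarieties of $\varq$ are controlled by the finitely presented algebras they contain, and these in turn sit inside the free algebras $\Falgv{n}$. The ``only if'' direction should be nearly immediate: if $\varq$ is primitive and $\var_0 \subseteq \var$ is a subvariety with $\falg{\omega}{\var_0} \in \varq$, then $\var_0 \cap \varq$ is a subquasivariety of $\varq$ (intersection of a quasivariety with a variety is a quasivariety), hence equals $\varq \cap \var_1$ for some variety $\var_1$; one then checks that $\var_0 \cap \varq = \qvar_{\var_0}$ by showing both inclusions. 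The inclusion $\qvar_{\var_0} \subseteq \var_0 \cap \varq$ holds because $\qvar_{\var_0}$ is generated by $\falg{\omega}{\var_0}$, which lies in $\var_0$ (it satisfies the identities of $\var_0$) and in $\varq$ by hypothesis. For the reverse inclusion, note $\var_0 \cap \varq$ is generated as a quasivariety by its finitely $\var_0$-presented members; each such algebra is a quotient of some $\falg{n}{\var_0}$, which is a homomorphic image of $\falg{n}{\var}$ (Proposition~\ref{pr_fp}(a)) and hence a member of $\qvar_{\var_0}$ after one identifies $\falg{n}{\var_0}$ as a subalgebra of a power of $\falg{\omega}{\var_0}$.

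For the ``if'' direction, assume \eqref{eq-pr-prim-0} holds for every eligible $\var_0$, and let $\qvar_0 \subseteq \varq$ be an arbitrary subquasivariety; the goal is to exhibit a variety $\var_1$ with $\qvar_0 = \varq \cap \var_1$. The natural candidate is $\var_1 := $ the variety generated by $\qvar_0$, i.e.\ $\CHom\CSub\CProd\,\qvar_0$, equivalently the equational closure of $\qvar_0$. One inclusion, $\qvar_0 \subseteq \varq \cap \var_1$, is trivial since $\qvar_0 \subseteq \varq$ and $\qvar_0 \subseteq \var_1$. For the converse, set $\var_0 := \var_1 \cap \var$, a subvariety of $\var$. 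Since $\qvar_0 \subseteq \varq$ is generated by finitely presented algebras each living inside some $\Falgv{n}$, and finitely generated subalgebras of $\Falgv{\omega}$ satisfy the quasi-identities of $\varq$, one argues that $\falg{\omega}{\var_0} \in \varq$: the free $\var_0$-algebra on countably many generators is built from the free $\var$-algebra by collapsing along the identities defining $\var_0$, and these identities hold in the generators of $\qvar_0$, hence throughout $\varq$'s relevant quotients. Then the hypothesis \eqref{eq-pr-prim-0} gives $\var_0 \cap \varq = \qvar_{\var_0}$. Finally one shows $\varq \cap \var_1 \subseteq \qvar_{\var_0} \subseteq \qvar_0$: an algebra in $\varq \cap \var_1$ lies in $\var_0 \cap \varq = \qvar_{\var_0}$, and $\qvar_{\var_0} \subseteq \qvar_0$ because $\falg{\omega}{\var_0}$, being generated by elements satisfying exactly the identities true throughout $\qvar_0$, belongs to the quasivariety $\qvar_0$ (this is the step that uses that $\qvar_0$'s equational theory is that of $\var_1$, so $\var_0$'s free algebra maps into a product of members of $\qvar_0$).

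The main obstacle I expect is the verification that $\falg{\omega}{\var_0} \in \qvar_0$ (and the parallel claim $\falg{\omega}{\var_0} \in \varq$): this is where one must be careful that the free algebra of the generated variety $\var_1 \cap \var$, on $\omega$ generators, actually sits inside a product of algebras from $\qvar_0$. The point is that $\var_1$ is \emph{the} variety generated by $\qvar_0$, so the identities valid in $\qvar_0$ are precisely those defining $\var_1$; since $\var_0 = \var_1 \cap \var \subseteq \var_1$, the free $\var_0$-algebra satisfies all identities of $\qvar_0$, and by the standard description of free algebras it embeds into a power of a faithful representative of $\qvar_0$, giving $\falg{\omega}{\var_0} \in \CSub\CProd\,\qvar_0 \subseteq \qvar_0$. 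Once this membership is secured, everything else is bookkeeping with Propositions~\ref{pr-qfpgen} and~\ref{pr_fp} and the elementary fact that a quasivariety intersected with a variety is again a quasivariety. I would present the argument by first isolating a lemma stating $\var_0 \cap \varq \supseteq \qvar_{\var_0}$ and $\varq \cap (\text{variety generated by } \qvar_0) \subseteq \qvar_{\var_1 \cap \var}$, then assembling the two directions.
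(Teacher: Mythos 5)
Your ``if'' direction is sound and is essentially the paper's argument: for a subquasivariety $\qvar_0 \subseteq \varq$ you take $\var_0$ to be the variety it generates, observe that $\falg{\omega}{\var_0} \in \CSub\CProd\,\qvar_0 \subseteq \qvar_0 \subseteq \varq$, and then combine \eqref{eq-pr-prim-0} with $\qvar_{\var_0} \subseteq \qvar_0 \subseteq \var_0 \cap \varq$ to get $\qvar_0 = \var_0 \cap \varq$.

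The ``only if'' direction, however, has a genuine gap. You invoke primitivity only to conclude that $\var_0 \cap \varq$ equals $\varq \cap \var_1$ for some variety $\var_1$ --- but that statement is vacuous, since $\var_0 \cap \varq$ is already of this form with $\var_1 = \var_0$; primitivity does no work there. The nontrivial inclusion $\var_0 \cap \varq \subseteq \qvar_{\var_0}$ is then argued bare-handed, and the key step fails: you write a finitely presented member of $\var_0 \cap \varq$ as a quotient of $\falg{n}{\var_0}$, note that $\falg{n}{\var_0} \in \qvar_{\var_0}$, and conclude that the quotient lies in $\qvar_{\var_0}$. Quasivarieties are not closed under homomorphic images, so this does not follow; the possible failure of exactly this implication (a quotient of a free algebra need not belong to the quasivariety the free algebra generates) is the whole content of the proposition. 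Indeed, if your argument were valid it would establish \eqref{eq-pr-prim-0} for every variety with $\falg{\omega}{\var_0} \in \varq$, with no primitivity hypothesis, hence would prove that every $\varq$ is primitive --- which is false, since structural completeness (which every $\varq$ enjoys) does not imply hereditary structural completeness. To repair the direction, apply primitivity where it bites: either, as the paper does, note that $\var_0 \cap \varq$ is a subquasivariety of the primitive $\varq$, hence is itself primitive, and since its equational closure is $\var_0$ (it contains $\falg{\omega}{\var_0}$ and lies inside $\var_0$) a primitive quasivariety with equational closure $\var_0$ must equal $\qvar_{\var_0}$; or, alternatively, apply the relative-variety property to $\qvar_{\var_0}$ itself, obtaining $\qvar_{\var_0} = \varq \cap \var_1$ with $\var_0 \subseteq \var_1$, whence $\var_0 \cap \varq \subseteq \var_1 \cap \varq = \qvar_{\var_0}$.
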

\begin{proof}
If $\falg{\omega}{\var_0} \in \varq$, we have $\falg{\omega}{\var_0} \in \var_0 \cap \varq$ and consequently $\qvar_{\var_0} \subseteq \var_0 \cap \varq \subseteq \var_0$. Any subquasivariety of a primitive quasivariety is primitive. Hence, quasivariety $\var_0 \cap \varq$ is primitive and $\qvar_{\var_0} = \var_0 \cap \varq$.

Conversely, suppose that \eqref{eq-pr-prim-0} holds for every subvariety $\var_0 \subseteq \var$ such that $\falg{\omega}{\var_0} \in \varq$. Let $\qvar \subseteq \varq$ be a subquasivariety and $\var_0$ be a variety generated by $\qvar$. Clearly, $\falg{\omega}{\var_0} \in \qvar \subseteq \varq$. Hence, \eqref{eq-pr-prim-0} holds and $\qvar$ is a relative variety. Thus, quasivariety $\varq$ is primitive.
\end{proof}
	
For the duration of this section we assume that $\var$ is a variety such that 
\begin{equation} \label{eq-cond} \tag{FC}
\begin{array}{ll} 
\text{(a)} & \var \text{ is ms-full}; \\
\text{(b)} & \text{Every compact congruence of any } \var\text{-algebra is principal};\\
\text{(c)} & \text{Every principal }\var\text{-congruence  is a factor congruence}.
\end{array} 
\end{equation}

\begin{example} Any double pointed discriminator variety satisfies the above conditions. For instance,
	$\VWSFi$ contains minimal algebra $\two$ and every $\VWSFi$-algebra has $\two$ as a subalgebra.
\end{example}
	
Let $\varmh \bydef \set{\Alg{A} \in \var}{ \Alg{A} \text{ is mh-full}}$. 

\begin{prop} $\varmh$  is a prevariety. \label{pr-prevar}
\end{prop}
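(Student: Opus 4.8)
The plan is to show that $\varmh$ is closed under isomorphic copies, subalgebras, and direct products, which is exactly what it means to be a prevariety. Closure under isomorphism is immediate, since mh-fullness is defined purely in terms of homomorphic images. Closure under subalgebras is already available: by Proposition \ref{pr-mhfull}, every nontrivial subalgebra of an mh-full algebra is mh-full, and a trivial subalgebra cannot occur unless the whole algebra is trivial — and if $\var$ contains a trivial algebra at all, one must be slightly careful, but since mh-fullness as stated requires $\mvar \subseteq \CHom\Alg{A}$ and a trivial algebra has only itself as a homomorphic image, the trivial algebra is mh-full only in the degenerate case $\mvar = \emptyset$; under our standing hypothesis \eqref{eq-cond}(a) the variety is ms-full so $\mvar \neq \emptyset$ and hence no trivial algebra is mh-full, so all subalgebras in question are nontrivial and Proposition \ref{pr-mhfull} applies directly.

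The substantive step is closure under direct products. I would take a family $\langle \Alg{A}_i \mid i \in I\rangle$ of mh-full algebras from $\var$ and let $\Alg{A} = \prod_{i \in I}\Alg{A}_i$; I must show $\mvar \subseteq \CHom\Alg{A}$. Fix a minimal algebra $\MA \in \mvar$. Pick any index $j \in I$ (if $I = \emptyset$ the product is trivial and $\mvar = \emptyset$ by ms-fullness, so there is nothing to prove); since $\Alg{A}_j$ is mh-full, there is a surjective homomorphism $g\colon \Alg{A}_j \twoheadrightarrow \MA$. Composing with the projection $\pi_j\colon \Alg{A} \twoheadrightarrow \Alg{A}_j$ gives a surjective homomorphism $g \circ \pi_j\colon \Alg{A} \twoheadrightarrow \MA$. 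Hence $\MA \in \CHom\Alg{A}$, and since $\MA$ was an arbitrary element of $\mvar$, the product $\Alg{A}$ is mh-full. Finally $\Alg{A} \in \var$ because $\var$ is a variety. Combining the three closure properties, $\varmh$ is a prevariety.

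I do not expect any real obstacle here; the only point requiring care is the treatment of trivial algebras and the empty index set, both of which are handled by invoking ms-fullness of $\var$ (condition \eqref{eq-cond}(a)) to guarantee $\mvar \neq \emptyset$, so that a trivial algebra is never mh-full and the degenerate cases are vacuous. Everything else reduces to the single observation that a homomorphic image of a factor (here, a projection) composes with a retraction onto the relevant minimal algebra, which is exactly the content already used in Proposition \ref{pr-sub-nontr} and its corollaries.
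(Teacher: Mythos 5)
Your proof is correct and essentially the paper's own: closure under products is obtained by composing a projection with a surjection onto the minimal algebra (the paper phrases this as ``a homomorphic image of $\Alg{A}$ is a homomorphic image of $\Alg{A}\times\Alg{C}$''), and closure under subalgebras comes directly from Proposition \ref{pr-sub-nontr}, of which Proposition \ref{pr-mhfull} is just a restatement. The one place you go beyond the paper --- the trivial-subalgebra case --- rests on a non sequitur: from ``no trivial algebra is mh-full'' it does not follow that subalgebras of mh-full algebras are nontrivial; the correct (and easy) reason is that a one-element subalgebra of an mh-full algebra $\Alg{A}$ would be mapped, by a surjection $\Alg{A}\twoheadrightarrow\MA$ with $\MA\in\mvar$, onto a one-element subalgebra of the non-degenerate minimal algebra $\MA$, which contradicts minimality; the paper itself simply passes over this degenerate case.
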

\begin{proof} It is clear that $\CProd\varmh \subseteq \varmh$, for if $\Alg{B}$ is a homomorphic image of $\Alg{A}$, then $\Alg{B}$ is a homomorphic image of $\Alg{A} \times \Alg{C}$ for any algebra $\Alg{C}$.
	
	Because all algebras from $\mvar$ are minimal, immediately from Proposition \ref{pr-sub-nontr} we conclude that $\CSub\varmh \subseteq \varmh$.
\end{proof}	 

Note that $\varmh$ is a quasivariety if and only if $\varmh$ can be axiomatized by a set of first-order sentences (see Proposition \ref{pr-prev-quas}). 

\begin{theorem}
If $\varmh$ is a quasivariety, then $\varmh = \varq$. Moreover, if $\mvar$ contains just one algebra, then $\varq$ is primitive.
\end{theorem}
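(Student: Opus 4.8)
The plan is to prove the two assertions separately, using the projectivity machinery of Section~\ref{sec-proj} together with Proposition~\ref{pr-qfpgen} (quasivarieties are generated by their finitely presented members).

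\emph{First assertion: $\varmh = \varq$ when $\varmh$ is a quasivariety.} For the inclusion $\varq \subseteq \varmh$, recall that $\Falgv{\omega}$ is mh-full (each minimal algebra, being countable, is a homomorphic image of $\Falgv{\omega}$), and by Proposition~\ref{pr-prevar} the class $\varmh$ is a prevariety; since $\varmh$ is a quasivariety by hypothesis, it is closed under $\CUProd$ as well, so the quasivariety $\varq$ generated by $\Falgv{\omega}$ is contained in $\varmh$. For the reverse inclusion $\varmh \subseteq \varq$, I would use Proposition~\ref{pr-qfpgen}: the quasivariety $\varmh$ is generated by its finitely $\varmh$-presented algebras, so it suffices to show every such algebra lies in $\varq$. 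Here is where the hypothesis~\eqref{eq-cond} enters: let $\Alg{A}$ be finitely $\varmh$-presented. One checks it is also finitely $\var$-presented (a finitely $\varmh$-presented algebra is a finite-rank-free-algebra quotient by a compact $\varmh$-congruence; I would argue this congruence is in fact a $\var$-congruence, or else pass to $\Falgv{n}/\theta_\var$ and observe it is still mh-full and still in $\varmh$, hence equals $\Alg{A}$). Being mh-full and finitely presented in the variety $\var$ satisfying~\eqref{eq-cond}, Corollary~\ref{cor-fp-proj} gives that $\Alg{A}$ is projective in $\var$, hence a retract of some $\Falgv{n}$, hence a subalgebra of $\Falgv{\omega}$, hence in $\varq$. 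This gives $\varmh \subseteq \varq$, and combined with the above, $\varmh = \varq$.

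\emph{Second assertion: if $\mvar = \{\MA\}$ is a singleton, then $\varq$ is primitive.} By Proposition~\ref{pr-prim} it suffices to show that for every subvariety $\var_0 \subseteq \var$ with $\falg{\omega}{\var_0} \in \varq$, we have $\var_0 \cap \varq = \qvar_{\var_0}$. The inclusion $\qvar_{\var_0} \subseteq \var_0 \cap \varq$ is automatic once $\falg{\omega}{\var_0} \in \varq$ (as noted in the proof of Proposition~\ref{pr-prim}). For the reverse, take $\Alg{A} \in \var_0 \cap \varq$; by Proposition~\ref{pr-qfpgen} applied inside $\varq$ (or by expressing $\Alg{A}$ as a direct limit of finitely presented algebras and using that quasivarieties are closed under subalgebras of such limits), it is enough to handle finitely $\varq$-presented $\Alg{A} \in \var_0$. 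Since $\mvar$ is the singleton $\{\MA\}$ and $\MA$ embeds in every nontrivial member (this needs to be derived: $\MA \in \var_0$ since $\var_0$ is nontrivial, and every nontrivial algebra of $\var$ has \emph{some} minimal subalgebra by ms-fullness, which must be $\MA$), any nontrivial $\Alg{A} \in \varq$ has $\MA$ as a subalgebra and, because $\Alg{A}$ is a subalgebra of $\Falgv{\omega}$ (it lies in $\varq$ and is finitely presented, so it is a subalgebra of some $\Falgv{n}$ by Corollary~\ref{cor-disc-unique-proj}-style reasoning), Proposition~\ref{pr-sub-nontr} forces $\MA \in \CHom\Alg{A}$, i.e. $\Alg{A}$ is mh-full, so $\Alg{A}\in\varmh = \varq$ automatically — that part is vacuous. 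The real point is to show $\Alg{A} \in \qvar_{\var_0}$: here I would invoke the criterion of projectivity. A finitely presented mh-full algebra in $\var_0$ (which also satisfies~\eqref{eq-cond}, being a subvariety of a discriminator-type variety) is projective \emph{in $\var_0$} by Corollary~\ref{cor-fp-proj}, hence a retract of $\falg{n}{\var_0}$, which belongs to $\qvar_{\var_0}$; thus $\Alg{A} \in \qvar_{\var_0}$.

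\emph{Main obstacle.} The delicate point is the second assertion's reduction to projective algebras: I must ensure that a finitely $\varq$-presented member of $\var_0$ is finitely $\var_0$-presented (so that Corollary~\ref{cor-fp-proj} applies inside $\var_0$) and that every such algebra is indeed mh-full relative to $\mvar$ — which uses crucially that $\mvar$ has exactly one element $\MA$ and that $\MA$ embeds everywhere, so mh-fullness is automatic for every nontrivial algebra in the variety. Once mh-fullness is free, projectivity in $\var_0$ follows from Corollary~\ref{cor-fp-proj}, and membership in $\qvar_{\var_0}$ follows because retracts of free algebras of $\var_0$ lie in $\qvar_{\var_0}$. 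I expect the bookkeeping around ``finitely presented in $\varq$ versus in $\var_0$'' (via Proposition~\ref{pr_fp}) to be the one genuinely fiddly step; everything else is assembling Corollaries~\ref{cor-fp-proj} and~\ref{cor-disc-unique-proj} and Propositions~\ref{pr-sub-nontr}, \ref{pr-prim}, and~\ref{pr-qfpgen}.
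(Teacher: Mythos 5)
Your proposal takes essentially the paper's route: $\varq\subseteq\varmh$ because $\Falgv{\omega}$ is mh-full; the reverse inclusion via Proposition \ref{pr-qfpgen}, the passage from $\falg{X,\Delta}{\varmh}$ to $\falg{X,\Delta}{\var}$ (Proposition \ref{pr_fp}) and the projectivity criterion (Corollaries \ref{cor-proj}, \ref{cor-fp-proj}); and primitivity via Proposition \ref{pr-prim} by rerunning the same argument inside a subvariety $\var_0$, which is exactly what the paper does. Two slips should be corrected, though neither is fatal to your plan. First, the sentence ``mh-fullness is automatic for every nontrivial algebra in the variety'' is false: what is automatic (from ms-fullness and the singleton hypothesis) is a minimal \emph{subalgebra}, not a minimal homomorphic image; a simple $\VWSFi$-algebra larger than $\two$ contains $\two$ but does not map onto it, and if mh-fullness were automatic the theorem would place every nontrivial member of $\var$ into $\varq$, which fails in general. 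Your argument survives because the algebras to which you actually apply Corollary \ref{cor-fp-proj} lie in $\varq=\varmh$, hence are mh-full, and mh-fullness relative to $\var_0$ coincides with mh-fullness relative to $\var$ since $\MA$ is the unique minimal algebra and $\MA\in\var_0$. Second, the claim that a finitely presented member of $\varq$ is a subalgebra of $\Falgv{\omega}$ is unjustified (membership in $\varq$ only places it in $\CSub\CProd\CUProd\{\Falgv{\omega}\}$); you rightly discard that detour as vacuous. Finally, the ``fiddly'' presentability transfer you flag is resolved exactly as in your first part: reduce to finitely $(\var_0\cap\varq)$-presented algebras (not finitely $\varq$-presented ones), pass to $\falg{X,\Delta}{\var_0}$, which maps onto $\Alg{A}$ and so is mh-full, hence projective in $\var_0$ (note that $\var_0$ inherits \eqref{eq-cond}: minimal subalgebras of its members lie in $\var_0$, and all congruences of an algebra in a variety are relative congruences, so conditions (b) and (c) persist), hence a subalgebra of $\falg{\omega}{\var_0}$ and a member of $\var_0\cap\varq$, whence Proposition \ref{pr_fp}(b) identifies it with $\Alg{A}$. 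Equivalently, and more economically (this is the paper's formulation), observe that $\var_0\cap\varq=\var_0\cap\varmh$ is precisely the class of mh-full members of $\var_0$ and is a quasivariety, so the first assertion applied to $\var_0$ yields $\var_0\cap\varq=\qvar_{\var_0}$ at once.
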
 \label{th-prim}
\begin{proof} First, let us observe that, since every minimal algebra is countable, $\Falgv{\omega} \in \varmh$. Hence
	\begin{equation}
	\varq \subseteq \varmh. \label{eq-th-prim-0}
	\end{equation}
	Let us prove that $\varmh \subseteq \varq$. By Proposition \ref{pr-qfpgen}, quasivariety $\varmh$ is generated by all finitely $\varmh$-presented algebras. Thus, to prove the claim, it is sufficient to show that every finitely $\varmh$-presented algebra $\Alg{A}$ is in $\varq$. 
	
	Indeed, assume that algebra $\Alg{A}$ is a nontrivial finitely $\varmh$-presented and $\pair{X}{\Delta}$ is a defining pair, that is, $\Alg{A} \cong \falg{X,\Delta}{\varmh}$ and let us consider $\falg{X,\Delta}{\var}$. Then, taking into accounts that $\varmh \subseteq \var$, by Proposition \ref{pr_fp}(a), we can conclude that $\falg{X,\Delta}{\varmh}$ is a homomorphic image of $\falg{X,\Delta}{\var}$. Hence, every homomorphic image of $\falg{X,\Delta}{\varmh}$ is a homomorphic image of $\falg{X,\Delta}{\var}$. Recall that $\falg{X,\Delta}{\varmh} \in \varmh$, thus, $\falg{X,\Delta}{\var} \in \varmh$.
	
	Since $\falg{X,\Delta}{\var}$ has nontrivial homomorphic images, it is nontrivial itself.  Moreover, $\falg{X,\Delta}{\var}$ is a quotient algebra of $\falg{n}{\var}$ (where $n$ is cardinality of $X$) by a compact congruence $\theta$ (generated by pairs of elements corresponding to relations from $\Delta$). By \eqref{eq-cond}(b), congruence $\theta$ is principal, and by \eqref{eq-cond}(c), congruence $\theta$ is a factor congruence. Therefore, by Corollary \ref{cor-proj}, algebra $\falg{n}{\var}/\theta$, and hence, algebra   $\falg{X,\Delta}{\var}$, are projective in $\var$. Every projective in $\var$ countable algebra is a subalgebra of $\Falgv{\omega}$, thus, $\falg{X,\Delta}{\var}$ is a subalgebra of $\Falgv{\omega}$ and we know that $\Falgv{\omega} \in \varq$. Therefore $\falg{X,\Delta}{\var} \in \varq$.  \\
	
	Now, suppose $\var$ contains a single minimal algebra $\MA$. Then, by assumption of ms-fullness, $\MA$ is a subalgebra of every notrivial algebra from $\var$, hence, $\MA$ belongs to every non-degenerate subvariety of $\var$. 
	
	Let $\var'$ be a nontrivial subvariety of $\var$ and let us consider quasivariety $\qvar' \bydef \var' \cap \varq$. It is clear that $\qvar'$ consists of all algebras $\Alg{A}$ from $\var'$ that have $\MA$ as a homomorphic image, i.e. $\qvar' = \varmh'$. Thus, we can apply to $\var'$ the same argument that we used for $\var$ and conclude that $\qvar'= \varmh' = \qvar_{\var'}$, that is, we have 
	\[
	\qvar_{\var'} = \var' \cap \varq
	\]
	Let us note that, because $\var$ being nontrivial, $\MA \in \var'$ and subsequently $\falg{\omega}{\var'} \in \varmh' = \qvar' \subseteq \qvar$. So, by Proposition \ref{pr-prim}, we have that quasivariety $\varq$ is primitive.
\end{proof}

In case when $\var$ is a discriminator variety containing only finite minimal algebras, we can say even more. 

\begin{theorem} \label{th-axiomat}
Let $\var$ be a discriminator variety and $\MA \in \var$ be a finite minimal algebra. Then there is a first order formula $\alpha$ such that
\begin{equation}
\Alg{A} \models \alpha \text{ if and only if } \MA \in \CHom\Alg{A}. \label{eq-th-axiomat-0}
\end{equation}
\end{theorem}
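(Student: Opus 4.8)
The plan is to reduce the statement $\MA \in \CHom\Alg{A}$ to a first-order expressible condition using the structural facts about discriminator varieties. Since $\var$ is discriminator, every finite algebra in $\var$ is (by Proposition \ref{pr-discr}(b)) a direct product of simple algebras, and the simple algebras of $\var$ have bounded size in the locally finite case — but here we cannot assume local finiteness, so instead I will argue semantically. The key observation is that $\MA$, being minimal, is generated by a single element, say $\MA = \{t^{\MA}(g) : t \text{ a term}\}$ for any $g \in \MA$; since $\MA$ is finite there are only finitely many term functions on $\MA$, so $\MA$ is presented (as an algebra in $\var$, using discriminator machinery to handle the quasi-equational diagram) by finitely many equations and negated equations in one variable. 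Concretely, let $\MA = \{m_0,\dots,m_{k-1}\}$ with $m_0$ a generator; pick for each $i<k$ a unary term $s_i$ with $s_i^{\MA}(m_0)=m_i$, and record the full operation table of $\MA$ as a finite conjunction $\delta(x)$ of equations $f(s_{i_1}(x),\dots)=s_j(x)$ together with the finite conjunction $\eta(x)$ of negated equations $s_i(x)\neq s_j(x)$ for $i\neq j$ witnessing that the $m_i$ are distinct.

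Next I would show: $\MA \in \CHom\Alg{A}$ if and only if $\Alg{A}$ satisfies the existential sentence $\alpha := \exists x\,(\delta(x)\wedge\eta(x))$. For the forward direction, a homomorphism $\varphi:\Alg{A}\twoheadrightarrow\MA$ has some $a\in\Alg{A}$ with $\varphi(a)=m_0$; then $\delta(a)\wedge\eta(a)$ may fail in $\Alg{A}$ (the $s_i(a)$ need not be distinct), so this naive choice does not work directly — this is the main obstacle. To get around it I would instead use the factor-congruence structure: if $\MA\in\CHom\Alg{A}$ then, since $\var$ is semisimple, $\MA$ is a homomorphic image through a maximal congruence, i.e. $\Alg{A}$ has a simple homomorphic image $\Alg{S}$ with $\MA\in\CHom\Alg{S}$, hence $\MA\cong\Alg{S}$ (both simple, $\MA$ minimal, and a surjection between simple algebras onto a minimal one is an iso when... ) — more carefully, $\MA$ minimal and $\Alg{S}$ simple with $\Alg{S}\twoheadrightarrow\MA$ forces the kernel to be $\varepsilon$ or $\tau$, so $\Alg{S}\cong\MA$. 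Thus $\MA\in\CHom\Alg A$ iff $\MA$ is (isomorphic to) a simple homomorphic image of $\Alg A$, and by the discriminator term one can express "the principal congruence $\theta(x,y)$ collapses everything" — i.e. being a simple quotient — but I want a positive statement about $\Alg A$ itself.

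The cleanest route: use that in a discriminator variety, $\MA\in\CHom\Alg A$ iff $\MA\in\CSub\Alg A$ via a subalgebra that is a retract. Actually, the sharpest tool available is Theorem \ref{th-pair-retract} combined with the fact that a principal congruence is a factor congruence: $\MA\in\CHom\Alg A$ iff there is $a\in\Alg A$ such that the subalgebra generated by $a$ maps onto $\MA$ and this is witnessed by the finite data $\delta,\eta$ evaluated in that quotient. Since "$\langle a\rangle/\theta(a,\cdot)\cong\MA$" unravels, via the discriminator term $t(x,w,z)$, into finitely many first-order conditions on $a$ alone (the discriminator lets one express the join/meet of principal congruences and membership in principal congruence classes by equations involving $t$), I would write $\alpha$ as $\exists x\,\psi(x)$ where $\psi$ is the conjunction, over all pairs $i\neq j<k$, of formulas asserting that $t$-expressions built from $s_i(x),s_j(x)$ behave as in $\MA$, plus the operation-table equations. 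I expect the main technical work to be precisely this translation of "the minimal subalgebra structure survives into a simple quotient" into discriminator-term equations, and verifying both implications; the backward direction will be routine once $\alpha$ is correctly formulated, and the forward direction will use semisimplicity to pass to a simple quotient isomorphic to $\MA$ and then read off a witness $x$ from the generator of that copy of $\MA$, pulled back to $\Alg A$ and corrected by the discriminator.
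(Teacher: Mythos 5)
There is a genuine gap: your proposal never arrives at an actual formula $\alpha$, and each of the reductions you lean on is either unproved or false at the stated level of generality. The sentence $\exists x\,(\delta(x)\wedge\eta(x))$ you begin with asserts that the elements $s_0(x),\dots,s_{k-1}(x)$ form a copy of $\MA$ inside $\Alg{A}$, i.e.\ it expresses $\MA\in\CSub\Alg{A}$, which is simply a different property from $\MA\in\CHom\Alg{A}$; you notice this, but the proposed repairs do not hold up. The equivalence ``$\MA\in\CHom\Alg{A}$ iff $\Alg{A}$ has a simple homomorphic image isomorphic to $\MA$'' presupposes that $\MA$ is simple, which is not a hypothesis of the theorem (a finite minimal algebra in a discriminator variety can be, for instance, a direct product of two non-isomorphic minimal simple algebras, hence non-simple), and even when $\MA$ is simple it is a trivial restatement that by itself yields no first-order condition. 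The equivalence ``$\MA\in\CHom\Alg{A}$ iff some one-generated subalgebra of $\Alg{A}$ maps onto $\MA$'' is justified only in the forward direction, since a homomorphism defined on the subalgebra generated by $a$ need not extend to $\Alg{A}$; and ``$\MA\in\CHom\Alg{A}$ iff $\MA$ embeds as a retract'' is asserted with no argument. Finally, the discriminator-term translation that is supposed to produce $\psi(x)$ is exactly the part you leave undone.

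For comparison, the paper's proof has three concrete ingredients that your sketch lacks. First, it uses Maltsev's diagram sentence $\beta$ for the finite algebra $\MA$, with existentially quantified variables $z_{\alg{b}_0},\dots,z_{\alg{b}_{n-1}}$ for \emph{all} elements of $\MA$ and, crucially, a universal clause forcing every element to coincide with one of them; this is what characterizes being isomorphic to $\MA$ rather than merely containing it. Second, it relativizes $\beta$ to quotients by principal congruences: replacing each equality $r=s$ by $t(x,y,r)=t(x,y,s)$ and invoking the Blok--Pigozzi property \eqref{eq-th-axiomat-2} of the discriminator term gives a formula $\beta^t(x,y)$ such that $\Alg{A}\models\beta^t(\alg{a},\alg{b})$ iff $\Alg{A}/\theta(\alg{a},\alg{b})\cong\MA$, and then $\alpha\bydef\exists x\exists y\,\beta^t(x,y)$. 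Third --- and this is the crux you correctly sensed but did not resolve, namely that the kernel of a surjection $\Alg{A}\mapping\MA$ need not be principal when $\Alg{A}$ is not finitely generated --- the direction $\MA\in\CHom\Alg{A}\Rightarrow\Alg{A}\models\alpha$ is handled by passing via Proposition \ref{pr-sub-nontr} to finitely generated subalgebras, where finiteness of $\MA$ makes the kernel compact and hence, by Proposition \ref{pr-discr}(c), principal, which produces the witnesses $\alg{a},\alg{b}$. Without analogues of these steps (a diagram sentence for isomorphism, the $t(x,y,\cdot)$-relativization, and a mechanism replacing ``pull back the generator'') your outline cannot be completed as written.
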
 
\begin{proof} Let us recall \cite[\S 10 Corollary 3]{MaltsevBook} that for each finite algebra $\Alg{B}$, if $\{\alg{b}_0,\dots,\alg{b}_{n-1}\}$ is a universe of $\Alg{B}$, there is a formula $\delta(z_{\alg{b}_0},\dots,z_{\alg{b}_{n-1}},z)$ - a diagram formula\footnote{See also a notion of open diagram in \cite{Burris_Sanka}.} of $\Alg{B}$ - such that for any algebra $\Alg{C}$,
\begin{equation}
\Alg{C} \models \exists(z_{\alg{b}_0},\dots,z_{\alg{b}_{n-1}})\forall(z)\delta(z_{\alg{b}_0},\dots,z_{\alg{b}_{n-1}},z) \text{ if and only if } \Alg{B} \cong \Alg{C}.  \label{eq-th-axiomat-1}
\end{equation}	 
Let $\Alg{B} = \MA$ and $\beta \bydef \exists(z_{\alg{b}_0},\dots,z_{\alg{b}_{n-1}})\forall(z)\delta(z_{\alg{b}_0},\dots,z_{\alg{b}_{n-1}},z)$.

Recall also from \cite[Theorem 2.8]{Blk_Pgz_3} that the discriminator term $t$ satisfies the following condition: for every $\Alg{A} \in \var$ and every $\alg{a},\alg{b},\alg{c},\alg{d} \in \Alg{A}$,
\begin{equation}
t(\alg{a},\alg{b},\alg{c}) = t(\alg{a},\alg{b},\alg{d}) \text{ if and only if } \pclass{\alg{c}}{\theta(\alg{a},\alg{b})} = \pclass{\alg{d}}{\theta(\alg{a},\alg{b})}. \label{eq-th-axiomat-2}
\end{equation}  	 

Let $\beta^t(x,y)$ be a formula obtained from formula $\beta$ by replacing every equality $r = s$ occurring in $\beta$ with
\begin{equation}
 t(x,y,r) = t(x,y,s).
\end{equation}
From \eqref{eq-th-axiomat-1} and \eqref{eq-th-axiomat-2} it follows that for any given elements $\alg{a},\alg{b} \in \Alg{A}$,  
\begin{equation}
\Alg{A} \models \beta^t(\alg{a},\alg{b}) \text{ if and only if } \Alg{A}/\theta(\alg{a},\alg{b}) \cong \MA. \label{eq-th-axiomat-3}
\end{equation}
Hence, if $\alpha \bydef \exists(x,y)\beta^t(x,y)$, then
\begin{equation}
\Alg{A} \models \alpha \text{ if and only if there are } \alg{a},\alg{b} \in \Alg{A} \text{ such that }  \Alg{A}/\theta(\alg{a},\alg{b}) \cong \MA. \label{eq-th-axiomat-4}
\end{equation}
Thus, if $\Alg{A} \models \alpha$, we have $\MA \in \CHom\Alg{A}$, and we need to prove only that if $\MA \in \CHom\Alg{A}$, then $\Alg{A} \models \alpha$. Let us observe that, because $\alpha$ contains just a finite number of variables, $\Alg{A} \models \alpha$ if and only if for every finitely generated subalgebra $\Alg{B} \leq \Alg{A}$, we have $\Alg{B} \models \alpha$. On the other hand, from Proposition \ref{pr-sub-nontr} we know that $\MA\in \CHom\Alg{A}$ yields $\MA \in \CHom\Alg{B}$ for every nontrivial subalgebra of $\Alg{A}$. Hence, if we prove that for every finitely generated algebra $\Alg{B}$, if $\MA \in \CHom\Alg{B}$, then $\Alg{B} \models \alpha$, we will complete the proof.

Let $\Alg{B}$ be a finitely generated algebra and $\MA \in \CHom\Alg{B}$. Then, because $\MA$ is finite, there is a compact congruence $\theta$ on $\Alg{B}$ such that $\MA \cong \Alg{B}/\theta$ (see e.g. \cite[Lemma 2.1]{Andreka_Jonsson_Nemeti_1991}). Because variety is discriminator, by Proposition \ref{pr-discr}(c), every compact congruence on $\Alg{B}$ is principal. Therefore, there are elements $\alg{a},\alg{b} \in \Alg{B}$ such that $\Alg{B}/\theta(\alg{a},\alg{b}) \cong \MA$. Hence, by \eqref{eq-th-axiomat-4}, $\Alg{B} \models \alpha$.
\end{proof}

\begin{cor}
	Let $\var$ be a discriminator variety such that $\var_{min}$ contains only finite algebras. Then $\var_{mh}$  is a quasivariety.
\end{cor}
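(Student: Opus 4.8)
The plan is to use Proposition~\ref{pr-prev-quas}, which says that a prevariety is a quasivariety if and only if it is an axiomatizable class. By Proposition~\ref{pr-prevar} we already know that $\varmh$ is a prevariety (its proof relies only on Proposition~\ref{pr-sub-nontr}, so nothing beyond the hypotheses of the corollary is needed), so the entire task reduces to exhibiting a set of first-order sentences whose models inside $\var$ are precisely the mh-full algebras.

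First I would fix a set $\Sigma$ of identities axiomatizing $\var$. Then, for each minimal algebra $\MA \in \mvar$ --- which is finite by hypothesis --- I would invoke Theorem~\ref{th-axiomat} to obtain a first-order sentence $\alpha_\MA$ with the property that $\Alg{A} \models \alpha_\MA$ exactly when $\MA \in \CHom\Alg{A}$. Since a minimal algebra is generated by any single one of its elements, minimal algebras are countable, so up to isomorphism there are only set-many of them; choosing one representative per isomorphism class makes $\{\alpha_\MA \mid \MA \in \mvar\}$ a genuine set. For an arbitrary algebra $\Alg{A}$, the condition $\Alg{A} \in \varmh$ unfolds to ``$\Alg{A} \in \var$ and $\mvar \subseteq \CHom\Alg{A}$'', i.e.\ to ``$\Alg{A} \models \Sigma$ and $\Alg{A} \models \alpha_\MA$ for every $\MA \in \mvar$''. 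Hence $\varmh$ is the class of models of $\Sigma \cup \{\alpha_\MA \mid \MA \in \mvar\}$, which is axiomatizable, and Proposition~\ref{pr-prev-quas} then delivers the conclusion.

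I do not anticipate a real obstacle here: all of the technical work has been absorbed into Theorem~\ref{th-axiomat}, which translates ``$\MA$ is a homomorphic image of $\Alg{A}$'' into a single first-order formula by combining the diagram formula of the finite algebra $\MA$ with the discriminator term, so as to internalize quotients by principal congruences. The only mild points of care are verifying that $\mvar$ contributes only set-many sentences --- handled by the countability of minimal algebras --- and checking that the class of mh-full algebras really is the intersection of $\var$ with the models of all the $\alpha_\MA$, which is immediate from the defining property of $\alpha_\MA$.
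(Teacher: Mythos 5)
Your proposal is correct and follows essentially the same route as the paper: Proposition \ref{pr-prevar} gives that $\varmh$ is a prevariety, Theorem \ref{th-axiomat} supplies a first-order sentence per (finite) minimal algebra so that $\varmh$ is axiomatizable together with the identities of $\var$, and Proposition \ref{pr-prev-quas} then yields that $\varmh$ is a quasivariety. The extra details you supply (one sentence per isomorphism class of minimal algebras, which form a set since minimal algebras are countable) are just an explicit unpacking of the paper's ``immediately follows from the above theorem.''
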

\begin{proof} We already know (see Proposition \ref{pr-prevar}) that $\var_{mh}$ is a prevariety. Hence, by Proposition \ref{pr-prev-quas}, it is enough to establish that $\var_{mh}$ is axiomatizable, which immediately follows from the above theorem. 
\end{proof}

The following corollary gives a description of all primitive subquasivarieties of discriminator variety $\var$ provided $\var$ has a finite minimal algebra that embeds in any nontrivial algebra from $\var$. 

\begin{cor} Let $\var$ be a discriminator variety that has a finite minimal algebra which embeds in any nontrivial algebra from $\var$. Then a subquasivariety $\qvar \subseteq \var$ is primitive if and only if $\qvar = \qvar_{\var'}$ for some subvariety $\var' \subseteq \var$.
\end{cor}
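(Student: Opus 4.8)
The plan is to show that the hypothesis forces every nontrivial subvariety of $\var$ to fall under condition~\eqref{eq-cond}, so that Theorem~\ref{th-prim} applies to each of them, and then to read off both implications. First I would record the structural observation underlying everything: let $\var'$ be a nontrivial subvariety of $\var$ (the case $\var'=\var$ included). Then $\var'$ is a discriminator variety, since the discriminator term of $\var$ still witnesses this on the nontrivial members of $\var'$; every nontrivial algebra of $\var'$ is a nontrivial algebra of $\var$, hence contains the distinguished finite minimal algebra $\MA$, so $\MA\in\var'$ and $\var'$ is ms-full; and every minimal algebra of $\var'$, having no proper subalgebras, is a minimal algebra of $\var$ and therefore equals $\MA$ (because $\MA$ embeds into it). Thus $\var'_{min}=\{\MA\}$, with $\MA$ finite, and by Proposition~\ref{pr-discr} the variety $\var'$ satisfies \eqref{eq-cond}; moreover, being discriminator with only finite minimal algebras, $\var'$ has $\var'_{mh}$ a quasivariety by the corollary to Theorem~\ref{th-axiomat}.

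For the ``if'' direction, let $\var'\subseteq\var$ be a subvariety. If $\var'$ is trivial, then $\qvar_{\var'}$ is the trivial quasivariety, which is primitive. If $\var'$ is nontrivial, then by the observation above $\var'$ satisfies \eqref{eq-cond}, $\var'_{mh}$ is a quasivariety, and $\var'_{min}$ consists of a single algebra; Theorem~\ref{th-prim}, applied with $\var'$ in place of $\var$, then gives that $\qvar_{\var'}$ is primitive.

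For the ``only if'' direction, suppose $\qvar\subseteq\var$ is primitive and let $\var'$ be the equational closure of $\qvar$; then $\var'$ is a subvariety of $\var$, and, as noted just before Proposition~\ref{pr-prim}, any primitive quasivariety coincides with the quasivariety $\qvar_{\var'}$ generated, over its equational closure $\var'$, by the free algebra of countable rank. Thus $\qvar=\qvar_{\var'}$, as required.

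The work here is essentially bookkeeping, and the one delicate point is the structural observation of the first paragraph — in particular the claim that a nontrivial subvariety $\var'$ of $\var$ acquires no new minimal algebras, so that the assumptions of Theorem~\ref{th-prim} are genuinely available for $\var'$. Once that is in hand, both directions are immediate.
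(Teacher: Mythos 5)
Your proposal is correct and follows essentially the route the paper intends: the corollary is meant to be read off from Theorem \ref{th-prim} together with the corollary of Theorem \ref{th-axiomat} (which give that $\qvar_{\var'}=\var'_{mh}$ is a primitive quasivariety for each nontrivial subvariety $\var'$) and the remark before Proposition \ref{pr-prim} that a primitive quasivariety coincides with $\qvar_{\var'}$ for its equational closure $\var'$. Your one substantive addition — checking that every nontrivial subvariety $\var'$ inherits the discriminator term, ms-fullness, and has $\MA$ as its unique (finite) minimal algebra, so that the cited results genuinely apply to $\var'$ — is exactly the verification the paper leaves implicit (and is also the move made inside the proof of Theorem \ref{th-prim} itself), and you carry it out correctly.
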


\section{Applications} \label{sec-appl}

The goal of this section to prove the following theorem that describes projective finitely-presented algebras an some discriminator varieties that arise from logic. \\

\textbf{Till the end of this section $\var$ is a discriminator variety of $\VWSFi$-algebras with compatible operations such that unit and bottom elements of $\Falgv{\omega}$ form a subalgebra denoted by
$\two$.} 

Note that $\var$ is ms-full, for every nontrivial algebra $\Alg{A} \in \var$ there is a homomorphism from $\Falgv{\omega}$ to $\Alg{A}$ that sends top and bottom elements of $\Falgv{\omega}$ to top and bottom elements of $\Alg{A}$. It is also clear that $\two$ is the only minimal algebra of $\var$. 

\begin{theorem} \label{th-FP-Proj} Let $\Alg{A}$ be a nontrivial finitely presented in $\var$ algebra. Then the following is equivalent
	\begin{itemize}
		\item[(1)] $\Alg{A}$ is projective in $\var$;
		\item[(2)] $\two$ is a homomorphic image of $\Alg{A}$; 
		\item[(3)] $\Alg{A}$ does not contain an element $\alg{a}$ such that 
		\begin{equation}	
		\Box\alg{a} = \Box\neg\alg{a}; \label{eq-th-FP-Proj-c}
		\end{equation}	
		\item[(4)] The quasiidentity 
		\begin{equation}
		\rho \bydef \neg\Box x \land \neg\Box\neg x \Rightarrow \zero \label{eq-th-FP-Proj-q} 	
		\end{equation}	holds on $\Alg{A}$.
	\end{itemize}	
\end{theorem}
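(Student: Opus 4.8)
The plan is to establish the cycle of implications $(1)\Rightarrow(2)\Rightarrow(3)\Rightarrow(4)\Rightarrow(2)\Rightarrow(1)$, leaning on the machinery already assembled. The equivalence $(1)\Leftrightarrow(2)$ is essentially a direct citation: since $\var$ is ms-full, every compact congruence of a $\var$-algebra is principal and every principal $\var$-congruence is a factor congruence (all three facts hold because $\var$ is a discriminator variety of $\VWSFi$-algebras with compatible operations, as noted just before the theorem), Corollary \ref{cor-fp-proj} applies and tells us that a nontrivial finitely presented $\Alg{A}$ is projective iff it is mh-full; and since $\two$ is the only minimal algebra of $\var$, being mh-full just means $\two \in \CHom\Alg{A}$, i.e. condition $(2)$.

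For $(2)\Leftrightarrow(3)$, I would invoke Theorem \ref{th-two}, which states precisely that for a nontrivial finitely generated $\VWSFi$-algebra with compatible operations having \emph{finite} Boolean reduct, $\two \in \CHom\Alg{A}$ iff $\Box\alg{a} \neq \Box\neg\alg{a}$ for every $\alg{a}$. The one gap to fill is the hypothesis that the Boolean reduct (i.e. the Boolean projection $\BRed{\Alg{A}}$) is finite: here $\Alg{A}$ is finitely presented, hence finitely generated, so $\BRed{\Alg{A}}$ is a finitely generated $\VWSFi$-algebra with compatible operations whose h-reduct is Boolean, and then Proposition \ref{pr-WS5fin} (together with the analogous finiteness propositions for the various compatible-operation expansions, or more directly the fact that a finitely generated $\VSFi$-algebra is finite) gives that $\BRed{\Alg{A}}$ is finite. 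With that, Theorem \ref{th-two} yields $(2)\Leftrightarrow(3)$ directly.

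The equivalence $(3)\Leftrightarrow(4)$ is the routine translation between "no element satisfies an equation" and "a quasi-identity holds." Reading $\rho$ as $\neg\Box x \land \neg\Box\neg x \Rightarrow \zero$: an element $\alg{a}$ witnesses a failure of $\rho$ iff $\neg\Box\alg{a} \land \neg\Box\neg\alg{a} \neq \zero$, and I would check, using that open elements form a Boolean subalgebra, that $\neg\Box\alg{a}\land\neg\Box\neg\alg{a} = \neg(\Box\alg{a}\lor\Box\neg\alg{a})$ and $\Box\alg{a}\lor\Box\neg\alg{a}$ is open, so this meet is $\zero$ iff $\Box\alg{a}\lor\Box\neg\alg{a}=\one$; separately, $\Box\alg{a}=\Box\neg\alg{a}$ forces (since $\Box\alg{a}\land\Box\neg\alg{a}=\Box(\alg{a}\land\neg\alg{a})=\Box\zero=\zero$ when Boolean, or using $\alg a\land\neg\alg a=\zero$ in general plus idempotency) $\Box\alg{a}=\Box\neg\alg{a}=\zero$, whence $\Box\alg a\lor\Box\neg\alg a=\zero\neq\one$; conversely if $\Box\alg a\lor\Box\neg\alg a\neq\one$ one shows the complementary idempotent splits off a factor on which the relevant element collapses $\Box\alg a$ and $\Box\neg\alg a$ to $\zero$, much as in the third part of the proof of Theorem \ref{th-two}. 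I expect this bookkeeping to be the only place requiring genuine care; the main structural obstacle is simply making sure the finiteness of $\BRed{\Alg{A}}$ is legitimately in hand so that Theorem \ref{th-two} may be applied, everything else being assembly of cited results.
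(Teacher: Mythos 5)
Your steps $(1)\Leftrightarrow(2)$ and $(2)\Leftrightarrow(3)$ follow the paper's route (the projectivity criterion for discriminator/factor-congruence varieties plus Theorem \ref{th-two}), and your remark that the finiteness of $\BRed{\Alg{A}}$ must be secured via the variety-specific propositions (\ref{pr-WS5fin}, \ref{pr-HIfin}, \ref{pr-HDPfin}, \ref{pr-DHfin}) is a fair point of care. The genuine problem is in your treatment of $(3)\Leftrightarrow(4)$: you read the quasiidentity $\rho$ as the identity $\neg\Box x \land \neg\Box\neg x \approx \zero$, i.e.\ you take a failure of $\rho$ at $\alg{a}$ to mean $\neg\Box\alg{a} \land \neg\Box\neg\alg{a} \neq \zero$. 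The intended (and standard, logic-derived) reading, which the paper uses and which is also what Corollary \ref{cor-wsprim} needs, is that the premise is the assertion $\neg\Box x \land \neg\Box\neg x \approx \one$ and the conclusion is absurdity; so $\rho$ is rejected in $\Alg{A}$ exactly when there is $\alg{a}$ with $\neg\Box\alg{a} = \neg\Box\neg\alg{a} = \one$, i.e.\ $\Box\alg{a} = \Box\neg\alg{a} = \zero$, and this is pointwise equivalent to $\Box\alg{a} = \Box\neg\alg{a}$ because $\Box\alg{a} \land \Box\neg\alg{a} \leq \alg{a} \land \neg\alg{a} = \zero$. Under that reading $(3)\Leftrightarrow(4)$ is immediate; under yours it is false.

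Concretely, take $\Alg{A} = \two \times \Alg{S}$ with $\Alg{S}$ a simple algebra having more than two elements (e.g.\ the three-element chain with $\Box\alg{d}=\zero$). No element of $\Alg{A}$ satisfies $\Box\alg{a} = \Box\neg\alg{a}$ (the first coordinate always separates them) and $\two \in \CHom\Alg{A}$, so $(2)$ and $(3)$ hold; yet for $\alg{a} = (\one,\alg{d})$ one has $\neg\Box\alg{a} \land \neg\Box\neg\alg{a} = (\zero,\one) \neq \zero$, so your version of $(4)$ fails. This also shows why your sketched repair cannot work: splitting off the factor along the open element $\Box\alg{a} \lor \Box\neg\alg{a}$ produces an element with $\Box\alg{c} = \Box\neg\alg{c} = \zero$ only in a proper quotient of $\Alg{A}$ (here $\Alg{S}$), and there is no way to pull it back to $\Alg{A}$; so the link from your $(4)$ back to $(3)$ or $(2)$, which your cycle of implications needs, cannot be established. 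The fix is simply to use the correct semantics of $\rho$, after which the step is the one-line observation made in the paper.
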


\begin{proof} First, let us observe that $\var$ is ms-full.
	
	(1) $\Leftrightarrow$ (2). Variety $\var$ is discriminator and it contains the only minimal algebra. So, by virtue of Corollary \ref{cor-disc-unique-single-proj}, (1) and (2) are equivalent.
	
	(2) $\Leftrightarrow$ (3) follows from Theorem \ref{th-two}.
	
	(3) $\Leftrightarrow$ (4) follows from the observation that quasiidentity $\rho$ is rejected in an algebra $\Alg{A}$ if and only if $\Alg{A}$ has an element $\alg{a}$ such that $\Box\alg{a} = \Box\neg\alg{a} = \zero$ (note that $\Box\alg{a} = \Box\neg\alg{a}$ is equivalent to $\Box\alg{a} = \Box\neg\alg{a} = \zero$).
\end{proof}

\begin{cor} \label{cor-fpsubalg} Any finitely presented subalgebra of $\Falgv{\omega}$ is projective.
\end{cor}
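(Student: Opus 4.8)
The plan is to derive this immediately from Theorem \ref{th-FP-Proj}, using the equivalence (1) $\Leftrightarrow$ (2) together with the fact that the unique minimal algebra $\two$ is a homomorphic image of every nontrivial subalgebra of $\Falgv{\omega}$.

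First I would record that $\Falgv{\omega}$ is mh-full: since $\two$ is the only minimal algebra of $\var$ and every minimal algebra is countable, $\two$ is a homomorphic image of $\Falgv{\omega}$. Next, suppose $\Alg{A}$ is a finitely presented subalgebra of $\Falgv{\omega}$. Then $\Alg{A}$ is in particular finitely generated, so $\Alg{A} \leq \Falgv{n}$ for some $n < \omega$, and $\Alg{A}$ is nontrivial. By Proposition \ref{pr-sub-nontr} (equivalently, Proposition \ref{pr-mhfull}), from $\two \in \CHom\Falgv{\omega}$ we obtain $\two \in \CHom\Alg{A}$; that is, $\Alg{A}$ satisfies condition (2) of Theorem \ref{th-FP-Proj}. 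Invoking the equivalence (2) $\Rightarrow$ (1) of that theorem, $\Alg{A}$ is projective in $\var$.

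Alternatively, the statement is a direct instance of Corollary \ref{cor-disc-unique-proj} applied to $\var$: by hypothesis $\var$ is a discriminator variety, $\two$ is its only minimal algebra, and $\two$ embeds in every nontrivial $\var$-algebra, which are exactly the assumptions of that corollary.

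I do not expect any genuine obstacle in this argument; the only points needing (routine) verification are that a finitely presented algebra is finitely generated and that a nondegenerate subalgebra of $\Falgv{\omega}$ is nontrivial, both of which are immediate, after which the conclusion is a one-line appeal to Theorem \ref{th-FP-Proj}.
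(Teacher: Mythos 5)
Your proof is correct and follows essentially the same route as the paper: observe that $\two \in \CHom\Falgv{\omega}$, transfer this to the nontrivial subalgebra $\Alg{A}$ via Proposition \ref{pr-sub-nontr}, and conclude projectivity from Theorem \ref{th-FP-Proj} (the paper does exactly this, without needing the detour through $\Falgv{n}$). Your alternative appeal to Corollary \ref{cor-disc-unique-proj} is also valid but is just a repackaging of the same argument.
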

\begin{proof} Suppose $\Alg{A}$ is a finitely presented subalgebra of $\Falgv{\omega}$. It is clear that $\Alg{A}$ is not a trivial algebra. Also, it is clear that $\two$ is a homomorphic image $\Falgv{\omega}$, and by Proposition \ref{pr-sub-nontr}, $\two$ is a homomorphic image of $\Alg{A}$, and application of Theorem \ref{th-FP-Proj} completes the proof. 
\end{proof}

\begin{cor} \label{cor-fpsubalgalg} A finitely presented algebra $\falg{X,\Delta}{\var}$ is projective if and only if formula $\bigwedge\{\delta \in \Delta\}$ is satisfiable in $\two$. Hence, there is an algorithm deciding by $\Delta$ whether $\falg{X,\Delta}{\var}$ is projective.
\end{cor}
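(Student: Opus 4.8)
The plan is to reduce projectivity of $\falg{X,\Delta}{\var}$ to the condition ``$\two \in \CHom\falg{X,\Delta}{\var}$'' supplied by Theorem \ref{th-FP-Proj}, and then to rewrite that condition as satisfiability of $\Delta$ in $\two$ via the universal property of finitely presented algebras.

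First I would record the correspondence between homomorphisms out of $\falg{X,\Delta}{\var}$ and valuations. By Definition \ref{def-finpres}, any map $\psi\colon X \to \Alg{B}$ with $\Alg{B}\in\var$ under which every formula of $\Delta$ holds extends (through the defining valuation) to a homomorphism $\falg{X,\Delta}{\var}\to\Alg{B}$; conversely, composing any homomorphism $\falg{X,\Delta}{\var}\to\Alg{B}$ with the defining valuation yields such a map, since homomorphisms preserve equations. Taking $\Alg{B}=\two$ and using that $\two$ is minimal — hence has no proper subalgebra, so every homomorphism into $\two$ is onto — I conclude that $\two\in\CHom\falg{X,\Delta}{\var}$ if and only if some valuation $X\to\two$ satisfies every $\delta\in\Delta$, that is, if and only if $\bigwedge\{\delta\in\Delta\}$ is satisfiable in $\two$.

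Next I would combine this with Theorem \ref{th-FP-Proj}. If $\bigwedge\{\delta\in\Delta\}$ is satisfiable in $\two$, then $\two\in\CHom\falg{X,\Delta}{\var}$, so $\falg{X,\Delta}{\var}$ is nontrivial and the equivalence $(1)\Leftrightarrow(2)$ of Theorem \ref{th-FP-Proj} gives projectivity. If it is not satisfiable in $\two$, then $\two\notin\CHom\falg{X,\Delta}{\var}$; when $\falg{X,\Delta}{\var}$ is nontrivial it then fails condition $(2)$ and so is not projective, and when it is trivial it is not projective either, because a trivial algebra cannot be a retract of the nontrivial free algebra $\Falgv{\omega}$ — an injection into $\Falgv{\omega}$ would force $\Falgv{\omega}$ to be trivial. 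This settles the biconditional.

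For the algorithmic claim, $\two$ is a fixed finite algebra and both $X$ and $\Delta$ are finite, so one simply enumerates the $2^{|X|}$ valuations $X\to\two$ and checks, for each, the finitely many equations of $\Delta$; by the biconditional just proved, $\falg{X,\Delta}{\var}$ is projective exactly when some valuation passes all checks. I do not anticipate a genuine obstacle here; the only slightly delicate point is the handling of a possibly trivial $\falg{X,\Delta}{\var}$, which is why the argument branches on nontriviality of $\falg{X,\Delta}{\var}$ rather than invoking Theorem \ref{th-FP-Proj} unconditionally.
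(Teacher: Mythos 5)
Your argument is correct and is essentially the deduction the paper intends (it leaves this corollary without an explicit proof): the universal property of the defining pair plus minimality of $\two$ identifies satisfiability of $\bigwedge\{\delta\in\Delta\}$ in $\two$ with $\two\in\CHom\falg{X,\Delta}{\var}$, and Theorem \ref{th-FP-Proj}(1)$\Leftrightarrow$(2) then gives projectivity, with the finiteness of $X$, $\Delta$ and $\two$ yielding the decision procedure. Your extra care with the possibly trivial $\falg{X,\Delta}{\var}$ (not projective, and $\Delta$ then unsatisfiable in $\two$ since the two constants cannot be identified) is a sound refinement of a point the paper glosses over.
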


Let us observe that equivalence of properties (2) and (4) of Theorem \ref{th-FP-Proj} means that the set $\var_{mh}$ forms a quasivariety. Hence we have the following.

\begin{cor} \label{cor-wsprim} A subquasivariety $\qvar \subseteq \var$ is primitive if and only if it admits quasiidentity $\rho$.  
\end{cor}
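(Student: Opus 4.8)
The plan is to obtain the corollary by feeding Theorem~\ref{th-FP-Proj} into the machinery of Section~\ref{sec-prim}. First I would record that, since $\var$ is a discriminator variety whose only minimal algebra $\two$ is finite, the corollary to Theorem~\ref{th-axiomat} gives that $\varmh$ is a quasivariety; hence $\varmh=\varq$ and $\varq$ is primitive, both by Theorem~\ref{th-prim} (using that $\mvar=\{\two\}$ is a singleton). After this reduction it suffices to prove, for a subquasivariety $\qvar\subseteq\var$, that $\qvar\subseteq\varq$ if and only if $\rho$ holds on $\qvar$, and then to invoke that subquasivarieties of a primitive quasivariety are primitive.

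For the ``if'' direction, assume $\rho$ holds on $\qvar$. By Proposition~\ref{pr-qfpgen} the quasivariety $\qvar$ is generated by its finitely $\qvar$-presented algebras, so it is enough to show each such algebra $\Alg{A}$ lies in $\varq$; we may assume $\Alg{A}$ is nontrivial. Being finitely $\qvar$-presented, $\Alg{A}$ is finitely generated, so its Boolean reduct is finite by Proposition~\ref{pr-WS5fin}; and since $\Alg{A}\in\qvar$, the quasiidentity $\rho$ holds on $\Alg{A}$, i.e. $\Alg{A}$ has no element $\alg{a}$ with $\Box\alg{a}=\Box\neg\alg{a}$. By Theorem~\ref{th-two} this gives $\two\in\CHom\Alg{A}$, hence $\Alg{A}$ is mh-full, that is $\Alg{A}\in\varmh=\varq$. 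Thus $\qvar$, being generated as a quasivariety by algebras lying in $\varq$, satisfies $\qvar\subseteq\varq$; and since $\varq$ is primitive and every subquasivariety of a primitive quasivariety is primitive (as observed in the proof of Proposition~\ref{pr-prim}), $\qvar$ is primitive.

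For the ``only if'' direction, assume $\qvar$ is primitive; we may assume $\qvar$ is nontrivial. Let $\var'$ be the equational closure of $\qvar$; then $\var'\subseteq\var$, and, $\qvar$ being primitive, $\qvar$ coincides with the quasivariety $\qvar_{\var'}$ generated by $\falg{\omega}{\var'}$, so in particular $\falg{\omega}{\var'}\in\qvar$. Since $\qvar$ is nontrivial it has a nontrivial member, which, lying in $\var$, contains $\two$ as a subalgebra; hence $\two\in\qvar\subseteq\var'$, and being countable $\two$ is a homomorphic image of the free algebra $\falg{\omega}{\var'}$. It remains to observe that any algebra having $\two$ as a homomorphic image satisfies $\rho$: if $\varphi$ maps such an algebra onto $\two$ and $\Box\alg{a}=\Box\neg\alg{a}$ for some $\alg{a}$, then $\Box\varphi(\alg{a})=\Box\neg\varphi(\alg{a})$ in $\two$, but $\two$ is simple, so its only open elements are $\zero$ and $\one$ (Proposition~\ref{pr-WS5}(b)), hence $\Box$ is the identity on $\two$ and $\varphi(\alg{a})=\neg\varphi(\alg{a})$, which is impossible. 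So $\rho$ holds on $\falg{\omega}{\var'}$, and being a quasiidentity it is inherited by $\CSub,\CProd,\CUProd$, hence by the whole quasivariety $\qvar=\qvar_{\var'}$ it generates.

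Most of this is bookkeeping with $\varmh$, $\varq$, and the cited results. The one genuinely delicate point is the last implication, ``$\two\in\CHom\Alg{A}$ implies $\Alg{A}\models\rho$'', applied to $\Alg{A}=\falg{\omega}{\var'}$: here Theorem~\ref{th-two} is unavailable because $\falg{\omega}{\var'}$ is not finitely generated, so the argument must be the elementary one above, relying only on the triviality of $\Box$ on $\two$. (As usual the trivial algebra is harmless, as it satisfies $\rho$ and belongs to every quasivariety.)
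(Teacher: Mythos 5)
Your proof is correct and takes essentially the same route as the paper, which obtains the corollary by observing that the equivalence (2)$\Leftrightarrow$(4) of Theorem~\ref{th-FP-Proj} makes $\varmh$ a quasivariety and then invoking the Section~\ref{sec-prim} machinery (Theorem~\ref{th-prim}, Proposition~\ref{pr-prim}, and the fact that $\qvar=\qvar_{\var'}$ for a primitive $\qvar$) — exactly the steps you spell out, including the elementary observation that $\two\in\CHom\Alg{A}$ forces $\rho$ without any finiteness hypothesis. The one imprecision is your citation of Proposition~\ref{pr-WS5fin} for finiteness of the Boolean projection of a finitely $\qvar$-presented algebra: in the presence of the additional compatible operations this finiteness is what Propositions~\ref{pr-HIfin}, \ref{pr-HDPfin} and \ref{pr-DHfin} are for (and it is an implicit hypothesis in the general setting of Section~\ref{sec-appl}), but the paper glosses this in the same way when it applies Theorem~\ref{th-two} inside Theorem~\ref{th-FP-Proj}, so this does not distinguish your argument from the intended one.
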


Let us observe that conditions of Theorem \ref{th-FP-Proj} hold for $\VWSFi$, $\VHRI$, $\HDP_n$ and $\DHt_n$, hence Theorem \ref{th-FP-Proj} and all its corollaries hold for these varieties. 

\section*{Acknowledgement} The author is in debt to V.~Marra whose hint allowed to extend the results from $\VWSFi$-algebras to discriminator varieties.

\bibliographystyle{plain}
\def\cprime{$'$}

\end{document}